\newcommand{\superficial}[1]{{\mathcal{O}_{#1}}} 
\DeclareMathOperator{\wilfoper}{W} 
\DeclareMathOperator{\eliahouoper}{E} 
\DeclareMathOperator{\Frobeniusoper}{F} 
\DeclareMathOperator{\multiplicityoper}{m} 
\DeclareMathOperator{\conductoroper}{\chi} 
\DeclareMathOperator{\genusoper}{\Omega} 
\DeclareMathOperator{\depthoper}{h} 
\DeclareMathOperator{\rhooper}{\rho} 
\DeclareMathOperator{\Ddepthoper}{D_{\depthoper}} 
\DeclareMathOperator{\densityoper}{d} 
\DeclareMathOperator{\leftsoper}{L} 
\DeclareMathOperator{\primitivesoper}{P} 
\DeclareMathOperator{\leftprimitivesoper}{LP} 
\DeclareMathOperator{\bigprimitivesoper}{BP} 
\DeclareMathOperator{\embeddingdimensionoper}{e} 
\DeclareMathOperator{\leftembeddingdimensionoper}{le} 
\newcommand{\prop}[2]{\mathcal{#1}_{#2}}
\newtheorem{theorem}{Theorem}[section]
\newtheorem{lemma}[theorem]{Lemma}
\newtheorem{corollary}[theorem]{Corollary}
\newtheorem{proposition}[theorem]{Proposition}
\newtheorem{conjecture}[theorem]{Conjecture}
\theoremstyle{remark} 
\newtheorem{example}[theorem]{Example}
\newtheorem{remark}[theorem]{Remark}
\newtheorem{question}[theorem]{Question}
\newtheorem{problem}[theorem]{Problem}
\newtheorem{definition}[theorem]{Definition}
\title{Trimming the numerical semigroups tree to probe Wilf's conjecture to higher genus}
\author{Manuel Delgado}
\date{\today}
\date{\today}
\address{CMUP, Departamento de Matem\'atica, Faculdade de
	Ci\^encias, Universidade do Porto, Rua do Campo Alegre 687, 4169-007 Porto,
	Portugal} 
\email{mdelgado@fc.up.pt} 
\thanks{The author was partially supported by CMUP (UID/MAT/00144/2019), which is funded by FCT (Portugal) with national (MCTES) and European structural funds (FEDER), under the partnership agreement PT2020, and also by the Spanish project MTM2017-84890-P. Furthermore, the author acknowledges a sabbatical grant from the FCT: SFRH/BSAB/142918/2018.}
\begin{document}
\keywords{Numerical semigroup, Wilf's conjecture, Wilf's number, Eliahou's number, trimmed tree, cutting semigroup}

\subjclass[2010]{20M14, 20--02, 05--02, 11--02}


\begin{abstract}
  This paper aims to contribute to validate, for numerical semigroups of reasonably large
  genus, the so-called Conjecture of Wilf. There is no counter-example for the conjecture among the over \(3\cdot10^{10}\) numerical semigroups of genus up to \(60\), as it has been computationally verified by Fromentin and Hivert.
  The computations use the idea of parsing a semigroups tree, making tests in each node. As a mean to combine parsing of the semigroups tree with known theoretical results, we introduce the concept of 
  \emph{cutting semigroup}.
  
  Assume that there exists a property on numerical semigroups that implies that a semigroup satisfying it necessarily satisfies Wilf's conjecture. Assume also that, besides, this property is hereditary, that is, if a semigroup satisfies it, then all its descendants in the semigroups tree also have the same property. Such properties exist. Interesting ones can be easily deduced from some deep results of Eliahou.
  A semigroup satisfying such a property is called a cutting semigroup (for that property).
  When looking for counter-examples to Wilf's conjecture, if a cutting semigroup is found, then it can be cut off, since no counter example lies among its descendants. One can therefore consider the trimmed tree obtained by cutting off all the cutting semigroups.
  
  Depending on the properties, these ideas allow in some cases to avoid parsing a very significant part of the tree and can be applied to other related problems. 
  For instance, they can be used to find all numerical semigroups with negative Eliahou number up to a genus bigger than \(60\), which is the current record. Computations by Fromentin had shown that there are exactly \(5\) numerical semigroups in these circumstances.
\end{abstract}

\maketitle

\tableofcontents

\newpage

\section{Introduction}\label{sec:introduction}
A numerical semigroup is a co-finite submonoid of the additive monoid of nonnegative integers \(\mathbb{N}\). Most of the general facts simply referred as \emph{well-known} in this text are proved (or conveniently referenced) in a book by Rosales and García-Sánchez~\cite{RosalesGarcia2009Book-Numerical}.  For instance, it is well known that a numerical semigroup has a unique minimal set of generators, which is finite. For an instant denote its cardinality by \(k\) (later, the commonly used nomenclature embedding dimension will be introduced). 
The positive integers not belonging to a numerical semigroup are its gaps; for the moment call them omitted values. The number of omitted values (later called genus) is denoted by \(\genusoper\).
Since a numerical semigroup \(S\) is a co-finite subset of the set of nonnegative integers, there exists the greatest integer that does not belong to \(S\). It is called the Frobenius number of \(S\), and its successor (which happens to be the least element of \(S\) such that all the integers greater than it belong to \(S\)) is the conductor of the semigroup. It is denoted \(\conductoroper\).

This paper is part of a bigger project involving experiments with numerical semigroups, most of them motivated by questions raised by Wilf in 1978. In this section we recall those questions, explain where the theory developed here fits into the project and give an outline of the present paper. 

\subsection{Questions raised by Wilf}\label{subsec:Wilf_questions} 

In the last paragraph of~\cite{Wilf1978AMM-circle}, Wilf posed  two problems:
    \begin{itemize}
    \item[(a)] Is it true that for a fixed \(k\) the fraction \(\Omega/\chi\) of
      omitted values is at most \(1-(1/k)\) with equality only for the
      generators \(k,k+1,\ldots,2k-1\)?
    \item[(b)] Let \(f(n)\) be the number of semigroups whose conductor is
      \(n\). What is the order of magnitude of \(f(n)\) for \(n\to\infty\)?
	 \end{itemize}

These problems attracted the attention of many researchers, with an outcome of several high quality papers.  
A few comments on both problems follow.

\subsubsection*{Comments on Problem (a)}

    The first part of Problem (a) is nowadays known as Wilf's conjecture. A result of Sylvester gives counter-examples for the second part (see Remark~\ref{rem:Silvester_implies_Wilf}); apparently Wilf forgot it. 
	Recently I wrote a survey on Wilf's conjecture (see~\cite{Delgado2019ae-Conjecture}) which, in particular, recalls many of the results one can find in the literature.   
  
    As any problem that attracts the interest of many researchers during a considerable amount of time, Problem (a) gives rise to other related problems which are interesting themselves. One of these problems was posed by Eliahou~\cite{Eliahou2018JEMS-Wilfs}, is object of active research (see~\cite{Delgado2018MZ-question,EliahouFromentin2019SF-misses}) and is also one of the motivating sources for the present work.

	Researchers are nowadays convinced that the equality referred by Wilf occurs only for two specific families of semigroups (see~\cite[Quest.~8]{MoscarielloSammartano2015MZ-conjecture}), but, to the best of my knowledge, it has never been stated as a conjecture. In Section~\ref{subsec:0-wilf}, we will refer it as the \(0\)-Wilf problem. 
	
\subsubsection*{Comments on Problem (b)}
	The second of the problems posed by Wilf and stated above, Problem (b), is on counting numerical semigroups. One could say that it is a problem on counting by conductor (or, equivalently, by Frobenius number). For instance, Backelin~\cite{Backelin1990MS-number} has considered it. In~\cite{RosalesGarcia-SanchezGarcia-GarciaJimenezMadrid2004JPAA-Fundamental} one can find the number of numerical semigroups with Frobenius number up to \(39\).
	Due to some conjectures raised by Bras-Amorós~\cite{Bras-Amoros2008SF-Fibonacci}, the main interest moved to counting numerical semigroups by genus. As will be referred later, there are satisfactory answers to some of the conjectures by Bras-Amorós. Kaplan wrote a survey on the theme that is worth to read (see~\cite{Kaplan2017AMM-Counting}).
	
	Possibly motivated by Bras-Amorós works~\cite{Bras-AmorosBulygin2009SF-Towards,Bras-Amoros2009JPAA-Bounds} (see also~\cite{Elizalde2010JPAA-Improved}), the computational approaches to this problem (see~\cite{FromentinHivert2016MC-Exploring,Bras-AmorosFernandez-Gonzalez2018MC-Computation}) are based on exploring a certain tree of numerical semigroups. We do not escape from it.
	
\subsection{This paper as part of an experimental project}\label{subsec:software_experiments} 
	Most of the theory related to a bigger experimental project is developed in the present paper. It roughly consists of putting together several rather simple ideas. Many of them are either directly suggested by the computations meanwhile done or appeared as a result of research that the computations revealed as necessary.
  	
  	The focus was on probing some conjectures on numerical semigroups up to some genus,
  	as well as finding examples of numerical semigroups satisfying very special properties. This increases the knowledge on the semigroups satisfying those properties and can ultimately contribute to characterize the families they define.
  	Many researchers have spent some time dealing with these kinds of problems, and various experimental results appear in the literature. The records stay presently around genus \(60\) or \(70\) (see, in particular, Section~\ref{subsec:counting_by_genus}). 
	
	The starting question for this project can be stated as ``How to integrate theoretical (possibly deep) results to reduce the search space for the examples?''. Having a satisfactory answer to this question, the will to break the records known appears naturally and that constitutes the more computational and experimental aspect of this project. The development of some theory and the different aspects just referred to have been developed simultaneously, with mutual benefits to all of them.
  	
  	The development of some theory and the alignment of some ideas is presented here, as referred. Other aspects are to appear in different works. A summary follows.
  	 	
  	One of the aspects is the development of some software in \textsf{GAP}~\cite{GAP4-2019}. Observe that the computational system \textsf{GAP} has its own high-level programming language (which is an interpreted language, not surprisingly called \textsf{GAP}). One should note that using a high-level programming language has big advantages on the production (and verification) of code, but one can not expect it to be fast, when compared with highly optimized programs produced using lower-level code.  	 
  	The code produced will be available in the form of a \textsf{GAP}~\cite{GAP4-2019} package~\cite{NStree-2019}, following all the standards of a \textsf{GAP} package, namely in what concerns the documentation. It consists of pure \textsf{GAP} code, and is to be proposed to the \textsf{GAP} Council for distribution as a standard \textsf{GAP} package. The development version of this open-source software will be available in GitHub.
  	
  	Another aspect is of experimental nature~\cite{Delgado2019-Probing}. It consists mainly of experiments made using the software developed. In particular, many of the tables produced are presented there. They point to an asymptotic behaviour of some of the sequences of numbers obtained, which leads to questions that may be worth to consider.
  	  	
  	One question that the reader naturally poses is whether the ideas presented may lead to performing tests for numbers significantly bigger than the already known ones (without increasing the time or computational means).
  	Here is where another aspect of the project comes into play: produce and use fast implementations. This was achieved with Jean Fromentin~\cite{DelgadoFromentin2019}, and some of the previous records were pulverized.  
  	
\subsection{Structure of the paper}\label{subsec:structure_of_paper}
	Besides this introduction where motivating questions due to Wilf are stated and some explanation on where this paper fits into a bigger project is given, the paper is divided into several sections whose content is summarized in what follows. 
	
  	\medskip
    
    In Section~\ref{sec:generalities}, some concepts and general notation that will be used along the paper, are introduced.
    Among these concepts is \emph{the} semigroup tree.
  	
   	Then, in Section~\ref{sec:counting_search_space}, we identify a search space for (counter) examples. It consists of the numerical semigroups with genus up to a given number and is tightly related to counting numerical semigroups by genus. A tree of numerical semigroups is commonly used to that effect; the search space is obtained by truncating the tree at a certain level.
   
   	Motivating problems, most of them well known to the researchers in the area, are stated in Section~\ref{sec:motivating}; then computational challenges associated to those problems are drawn.
   
   	Some results on semigroups generated by the elements to the left of the conductor appear in Section~\ref{sec:lefts}. These results are then used in Section~\ref{sec:trimming_global}, which is the heart of the paper. That section is about the integration of theoretical results in a process of trimming the numerical semigroups tree as a way of tackling the computational challenges previously proposed.
   
   	Section~\ref{sec:trimming_truncated} is also dedicated to trimming, but this time truncated trees. In any case, the trimming is made according to the challenge in mind.
   
   	A few technical details are presented in Section~\ref{sec:technical_details}, the last in the paper.
  
  	\medskip

	Some of the examples presented involve computations with numerical semigroups. The examples have been chosen so that the reader can easily do the verifications by hand. In some cases, \textsf{GAP} sessions are included to illustrate them. We encourage the readers who have at their disposal a working version of the \textsf{numericalsgps}~\cite{NumericalSgps1.2.0} \textsf{GAP}~\cite{GAP4-2019} package to adapt the commands used in these \textsf{GAP} sessions to produce their own examples possibly involving bigger numbers.

	From time to time I express some opinions which I see as being of a rather personal nature. In those cases I avoid using the colloquial \emph{we} which is generally adopted along the paper.

\section{Generalities}\label{sec:generalities}
The letter \(S\) is used to denote a numerical semigroup. We often follow the quite common simplifying practice of writing \(S\) instead of explicitly saying that \(S\) is a numerical semigroup. 

The sets of integers considered in this paper are endowed with the usual order and thought as being discrete sets in an horizontal line. For integers \(a\) and \(b\), sentences like ``\(a\) is to the right of \(b\)'' will be freely used with the usual meaning ``\(a > b\)''.   
 
Notation and terminology used in various parts of the paper are introduced below. Most of it is commonly used in the literature. A few well-known remarks are recalled.
Some more notation appears later, as the necessary concepts are introduced. 

The second part of this section is devoted to \emph{the} numerical semigroups tree. A construction is recalled and a few observations on its exploration are made.

\subsection{Common notation and terminology}\label{sec:general_notation}   
 	For a set \(X\) of positive integers, \(\langle X\rangle\) denotes the submonoid of \(\mathbb{N}\) generated by \(X\). It is well known that \(\langle X\rangle\) is a numerical semigroup if and only if the elements of \(X\) are globally coprime, that is \(\gcd(X)=1\). 
 	The notation \(\langle X\rangle_t\) is used to represent the smallest numerical semigroup that contains \(X\) and all the integers greater than or equal to~\(t\). We will refer to it as the semigroup generated by \(X\) \emph{truncated} at~\(t\). 
 	As usual, \(\lvert X \rvert\) denotes the cardinality of a set \(X\).
 	Occasionally (integer) interval notation will be used: for instance, \(\left[a,b\right]\) is used for  \(\left[a,b\right]\cap \mathbb{Z}\). 
 	For a given integer \(n\), the infinite interval \(\{x\in \mathbb{Z}\mid x\ge n\}\) is denoted \(\left\{n,\rightarrow\right\}\) instead, as is common in the area.
 
  	Recall that a numerical semigroup \(S\) has a unique minimal set of generators, which is finite. Its elements are the \emph{minimal generators} of~\(S\), also known as \emph{primitives} of~\(S\) (the latter is the terminology adopted in the present paper).
 	The set of primitives of~\(S\) is denoted \(\primitivesoper(S)\), or simply \(\primitivesoper\), when there is no possible confusion. A non primitive element of \(S\setminus\{0\}\) is called \emph{decomposable}.
 	\begin{remark}\label{rem:primitive_in_subsemigroup}  
 		Let \(S\) and \(S'\) be numerical semigroups with \(S'\subset S\) and let \(p\in S'\cap \primitivesoper(S)\). Then \(p\in \primitivesoper(S')\).
 	\end{remark}
 	\begin{proof}
 		Suppose that \(p\) is decomposable in \(S'\). Then it is decomposable in \(S\) (with the same decomposition).
 	\end{proof}
 	The cardinality of \(\primitivesoper(S)\) is said to be the \emph{embedding dimension of~\(S\)}. It is denoted \(\embeddingdimensionoper(S)\). The notation is frequently simplified to~\(\embeddingdimensionoper\), as long as there is no possible confusion.
  
 	Recall that the \emph{conductor of~\(S\)} is the smallest element of~\(S\) such that all integers to its right belong to~\(S\). It is denoted by \(\conductoroper(S)\), or simply by~\(\conductoroper\). The \emph{Frobenius number} of \(S\) is \(\conductoroper(S)-1\) and is denoted \(\Frobeniusoper(S)\).
 
 	The \emph{multiplicity of~\(S\)} is the least positive element of~\(S\). It is denoted by \(\multiplicityoper(S)\), or simply~\(\multiplicityoper\).
 	It is well known that \(\embeddingdimensionoper(S)\le \multiplicityoper(S)\); when equality occurs, \(S\) is said to be of \emph{maximal embedding dimension}.
 
 	The \emph{depth of~\(S\)} is the integer \(\depthoper(S)=\lceil \conductoroper(S)/\multiplicityoper(S)\rceil\). It gives a measure of how large is the multiplicity of a numerical semigroup when compared to its conductor (or, in other words, how little is the conductor when compared to the multiplicity). This important parameter appears, for instance, in~\cite{EliahouFromentin2020JCTSA-Gapsets}. Another parameter associated to a numerical semigroup that can be defined similarly is \(\densityoper(S)=\lceil \multiplicityoper(S)/\embeddingdimensionoper(S)\rceil\). We will call it the \emph{density of~\(S\)}. It gives a measure of how large is the embedding dimension of a numerical semigroup when compared to its multiplicity (or, in other words, how little is the multiplicity when compared to the embedding dimension).
  
 	The set of \emph{left elements} (respectively \emph{left primitives}) \emph{of~\(S\)} consists of the elements (respectively \emph{primitives})  \emph{of~\(S\)} that are smaller than~\(\conductoroper(S)\). 
 	The set of left elements is denoted by \(\leftsoper(S)\), or simply by~\(\leftsoper\). Occasionally the set of left primitives of \(S\) will be denoted \(\leftprimitivesoper(S)\).
 	The number of left primitives of~\(S\) is said to be the \emph{left embedding dimension} of \(S\).
 	It is denoted~\(\leftembeddingdimensionoper(S)\), or simply~\(\leftembeddingdimensionoper\).
 
 	The set of \emph{small elements} (respectively \emph{small primitives}) \emph{of~\(S\)} consists of the elements (respectively \emph{primitives}) \emph{of~\(S\)} that are smaller than or equal to~\(\conductoroper(S)\).
 
 	The terminology  \emph{right elements} (respectively \emph{right primitives}) and  \emph{big elements} (respectively \emph{big primitives}) is used with similar meanings. The notation \(\bigprimitivesoper(S)\) for the big primitives of \(S\) will occasionally be used.  
 
 	To avoid confusions, we would like to stress out that \emph{left} is used for strictly smaller than the conductor, while the use of \emph{small} is for smaller than or equal to the conductor. We use \emph{right} and \emph{big} with similar subtle differences.
 	This terminology is not original, unfortunately not always used with the same meaning by not taking into account the subtlety referred to. Our option fits the terminology in the \textsf{GAP} package \textsf{numericalsgps}~\cite{NumericalSgps1.2.0}, where \emph{small} elements are used (they even serve to define a numerical semigroup), and by Eliahou who, interested in Wilf's conjecture (stated in terms of Wilf's number (see page~\pageref{eq:wilf-number})), used the \emph{left} elements.
 	\medskip
  
  	Pictures usually help me to understand individual (and also some families of) numerical semigroups. The numerical semigroups \(S=\langle 7,11,37,38,41\rangle\) and \(T=\langle7,18,34,37,38,40\rangle\) are depicted in Figure~\ref{fig:first-pictures}. The images were obtained automatically with the \textsf{GAP} package \textsf{intpic}~\cite{IntPic-2019}; the number of columns is the multiplicity of the semigroup. The elements of the semigroup up to \(\conductoroper+\multiplicityoper-1\) are in the coloured cells. The primitives and the conductor are further highlighted: they are in gradient coloured cells.

  	It is worth to observe that the height of the image representing a semigroup is its depth plus~1. Both semigroups in Figure~\ref{fig:first-pictures} are of depth \(5=\lceil\frac{35}{7}\rceil=\lceil\frac{34}{7}\rceil\). It is this relation between depth and height that leads us to use \(\depthoper(S)\) for the depth of \(S\).
  
  	The density of the semigroup tells how dense is the set of columns containing primitives (one at most) in the set of all columns.

	\begin{figure}[h]
	\centering
	\begin{subfigure}[b]{0.35\textwidth}
		\includegraphics[width=\textwidth]{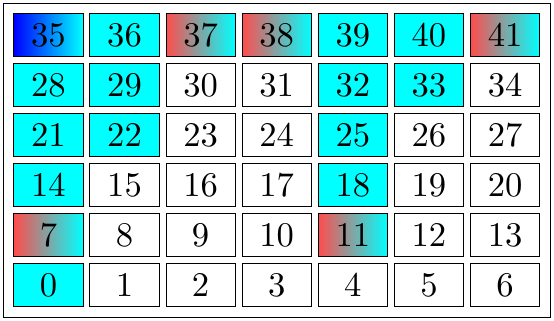}
		\caption{\(S=\langle 7,11,37,38,41\rangle\)}
	\end{subfigure}
	\quad\quad
	\begin{subfigure}[b]{0.35\textwidth}
		\includegraphics[width=\textwidth]{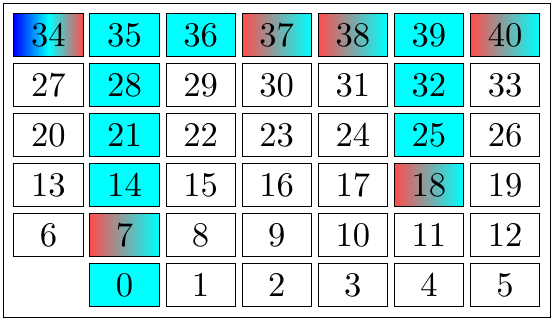}
		\caption{\(T=\langle 7,18,34,37,38,40\rangle\)}
	\end{subfigure}
	\caption{Pictorial view of numerical semigroups. \label{fig:first-pictures}}
	\end{figure} 		
 
	\medskip
 
 	The numerical semigroup \(\mathbb{N}\) has embedding dimension \(1\). All other numerical semigroups have embedding dimension at least~\(2\).
 	The multiplicity of \(\mathbb{N}\) is~\(1\) and its conductor is~\(0\). Therefore its depth is~\(0\). Its single primitive, \(1\), is a big primitive. It follows from the definitions that the multiplicity of a numerical semigroup other than \(\mathbb{N}\) does not exceed its conductor.
 	Note also that \(\mathbb{N}\) is the only numerical semigroup with no left elements, and a single small element,~\(0\).
 
 	\medskip 
 
 	A positive integer that does not belong to \(S\) is said to be a \emph{gap} of \(S\) (\emph{omitted value} in Wilf's terminology). The cardinality of the set of gaps of \(S\) is said to be its \emph{genus} and is denoted by \(\genusoper(S)\), or simply by \(\genusoper\), when \(S\) is understood. 
 
 	It is well known (see \cite[Lemma~2.14]{RosalesGarcia2009Book-Numerical}) that the genus \(\genusoper\) of a numerical semigroup is no smaller than \(\conductoroper/2\). For further reference we state it as a remark.
 	\begin{remark}\label{rem:bound_for_genus_in_terms_of_conductor}
 		With the notation above, the following holds: \(\conductoroper/2\le\genusoper\).
 	\end{remark} 

 	In the case of a numerical semigroup \(S = \langle a, b\rangle\) with embedding dimension \(2\), a very nice result of Sylvester says that \(\conductoroper(S) = (a-1)(b-1)\) and that \(\genusoper(S)=\conductoroper(S)/2\). Consequently, for semigroups of embedding dimension \(2\), one has \(\genusoper(S)/\conductoroper(S)=1/2=1-1/2\) and the following remark can be stated. (Recall that Wilf denoted by \(k\) the embedding dimension, which in the present case is~\(2\).)
 	\begin{remark}\label{rem:Silvester_implies_Wilf}
 		Wilf's equality holds for numerical semigroups with embedding dimension~\(2\). In particular, numerical semigroups with embedding dimension~\(2\) satisfy Wilf's conjecture.
 	\end{remark}

\subsection{A tree of numerical semigroups}\label{subsec:the_tree}
 	Consider the directed graph whose set of vertices (also called nodes) consists of all numerical semigroups, and whose edges are as follows: there is an edge from a semigroup \(S_1\) to a semigroup \(S_2\) if the latter is obtained from the former by removing one element (which is necessarily primitive). 
 
 	It is well known (see~\cite[Prop~7.1]{RosalesGarcia2009Book-Numerical}) that starting with \(\mathbb{N}\) as root and considering at each node the descendants obtained by removing big primitives (recall that these primitives are greater than the Frobenius number) one obtains a tree. It happens that this tree contains all vertices of the above defined graph, that is, all the numerical semigroups occur as vertices of this tree. A tree that contains all vertices of a graph is usually called a generating tree of the graph. 
 	This particular generating tree is frequently referred to as \emph{the} tree of numerical semigroups. 
  	It is widely considered in the literature (see, for instance,~\cite{Bras-AmorosBulygin2009SF-Towards,Elizalde2010JPAA-Improved}). 
 	We will call it \emph{the classical tree of numerical semigroups}. Sometimes we use some abbreviated form by omitting one (or both) of the terms ``classical'' or ``numerical'', for short. I am quite unsure on whether this tree always plays a more important role in the theory of numerical semigroups than any other generating tree, but the simplicity and beauty of its construction, leaves our mind not open enough to seriously consider alternatives. 
 	Terminology coming from Graph Theory used in the present paper refers to this tree or some of its subtrees, as should be clear from the context.
 
 	When a numerical semigroup \(S_2\) is obtained from a numerical semigroup \(S_1\) by removing a \emph{big} primitive, we say that \(S_2\) is a \emph{child} of \(S_1\). A numerical semigroup with no children is a \emph{leaf}. 
 	A few terms from Graph Theory, such as \emph{parent} or \emph{descendant}, will be used freely. Note that if \(S_2\ne \mathbb{N}\), then \(\Frobeniusoper(S_2)\) is a big primitive of \(S_2 \cup \{\Frobeniusoper(S_2)\}\). It follows that \(S_1\) is the parent of \(S_2\) in the numerical semigroups tree if and only if \(S_1 = S_2 \cup \{\Frobeniusoper(S_2)\}\).
 
 	It will be used the notation \(S\succ T\), or \(T\prec S\), meaning that the numerical semigroup \(T\) descends from \(S\). As usual, \(S\succeq T\) means \(S\succ T\) or \(S = T\); similarly, \(S\preceq T\) means \(S\prec T\) or \(S = T\).
 	With the notation introduced, we have that \(S\succ T\) if and only if there exists a finite sequence of numerical semigroups 
	\begin{equation}\label{eq:sequence_of_descendants}
	 	S_0=T\subset S_1\subset\cdots\subset S_r = S
	\end{equation}
  	such that \(S_{i}=S_{i-1}\cup \{\Frobeniusoper(S_{i-1})\}\), or, equivalently, \(S_{i-1}=S_{i}\setminus\{x_{i}\}\), for some \(x_i\in\bigprimitivesoper(S_i)\), with \(1\le i\le r\). 
  	Note that the sequence \(\conductoroper(S_0)> \conductoroper(S_1)>\cdots >\conductoroper(S_r)\) of the associated conductors is decreasing and that the sequence \(\genusoper(S_0)> \genusoper(S_1)>\cdots >\genusoper(S_r)\) of the associated genera is increasing. We summarize it in the following remark.
	\begin{remark}\label{rem:conductor_of_descendant}  
		Let \(S'\) and \(S\) be numerical semigroups. If \(S'\prec S\), then \(S'\subset S\), \(\conductoroper(S)< \conductoroper(S')\) and \(\genusoper(S)<\genusoper(S')\).
	\end{remark}
	Before proceeding, with the purpose of gaining familiarity with the concepts introduced, let us look at an example of a subtree rooted by a numerical semigroup.
	\begin{example}
		Let \(S=\langle 5,8,11,12,14\rangle\). Figure~\ref{fig:58111214} is a pictorial representation of the tree rooted at \(S\) (which happens to be finite). The root is on the top and the big primitives are highlighted.
		\begin{figure}[h]
			\begin{center}
				\includegraphics[width=0.95\textwidth]{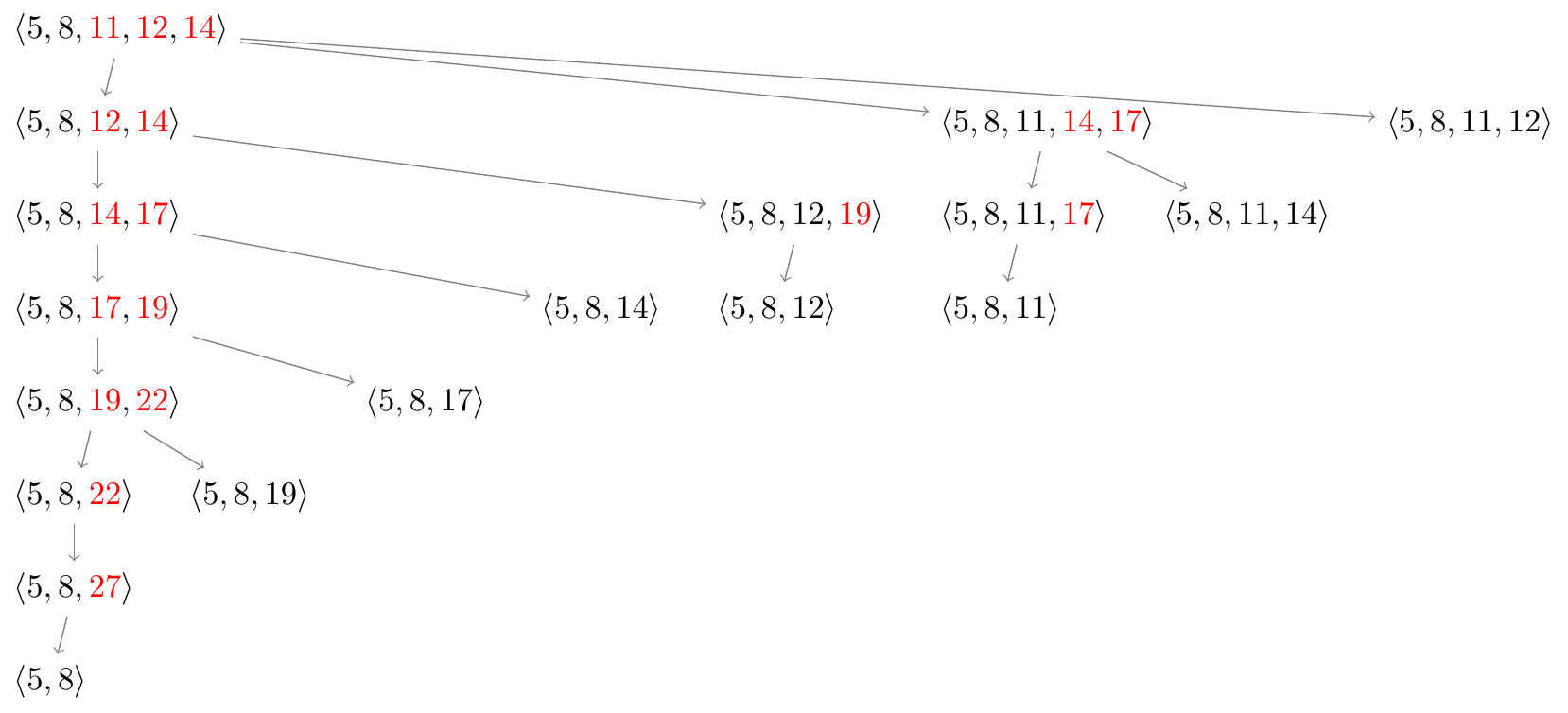}
				\caption{Pictorial representation of the tree rooted at \(S=\langle 5,8,11,12,14\rangle\).}
				\label{fig:58111214}
			\end{center}
		\end{figure} 
  		The semigroup \(\langle 5,8,19,22\rangle\) is a descendant of \(\langle 5,9,11,12,14\rangle\), which has two children, while \(\langle 5,8,17\rangle\) is a descendant of the same semigroup with no children (it is a leaf).
	\end{example} 
 	For a given positive integer \(m\), the numerical semigroup \(\mathcal{O}_m=\langle m,\ldots,2m-1\rangle\) is called the \emph{superficial semigroup of multiplicity \(m\)}, following the terminology used by Eliahou and Fromentin~\cite{EliahouFromentin2020JCTSA-Gapsets}, since the \(\mathcal{O}_m\) are exactly those numerical semigroups of depth \(1\) (\emph{ordinary} is the terminology commonly used in most of the literature presently available; \emph{half-line} was used by Rosales and García-Sánchez in~\cite{RosalesGarcia2009Book-Numerical}). Note that \(\mathcal{O}_1=\mathbb{N}\).
 	\medskip
 
 	From the above, we see that the classical tree of numerical semigroups can be constructed by successively removing \emph{big} primitives (assuming that entering infinite paths is not permitted -- exploring in a breadth first manner is appropriate; if one wants to explore in a depth first manner, a maximum level should be fixed in advance). 
 	A way to perform this construction, which is suitable for exploring in parallel, is the following.
 	First construct a forest consisting of trees rooted by superficial semigroups, then turn this forest into a tree by connecting the roots of the trees in it.
 	As the subtrees rooted by distinct superficial semigroups have no common nodes, one can explore the tree in parallel by launching one process for each of these subtrees.
 
 	A superficial semigroup of multiplicity \(m\), \(\mathcal{O}_m\),  is the root of a tree constructed as follows: the children of each node are obtained by removing big primitives greater than \(m\). 
 	Denote this tree by \(\mathbf{T}_m\). The set of vertices of \(\mathbf{T}_m\) is precisely the set of all numerical semigroups with multiplicity~\(m\). Excepting \(\mathbf{T}_1\), which has a single node, \(\mathbb{N}\), all these trees are infinite.
 
 	The classical tree of numerical semigroups is then obtained as a disjoint union of the \(\mathbf{T}_m\)	by turning it into a connected graph through the addition of the edges
  	\[\mathcal{O}_m\longrightarrow \mathcal{O}_{m+1}\]
 	Note that \(\mathcal{O}_{m+1}\) could now be seen as a result of removing from \(\mathcal{O}_m\) the big primitive \(m\) (the only big primitive that had not been removed in the construction of \(\mathbf{T}_m\)).
 
 	By using this construction, the numerical semigroups can be naturally disposed into levels: a child of a semigroup at level \(\ell\) is at level \(\ell+1\). Note that all the semigroups in a given level have the same genus. Let us fix a level for the root, \(\mathbb{N}\), (and consequently to all numerical semigroups): the root is at level \(0\). By making this choice, the level of a semigroup in the classical tree is precisely its genus. Thus, the level of the root \(\mathcal{O}_m\) of the tree \(\mathbf{T}_m\)) is \(m-1\).
 
 	Observe that the above construction involves, at each node, the determination of the primitives of each child, since these will be used to obtain the grand-children. This basically amounts to check, for each big primitive \(p\), whether \(p+\multiplicityoper\) is primitive. One finds in the literature several attempts for an efficient way of doing this (see~\cite{FromentinHivert2016MC-Exploring,Bras-AmorosFernandez-Gonzalez2018MC-Computation}; see also~\cite{Delgado2019-Probing}). 
 
 	We will vaguely refer to the construction of some subtree of the classical tree of numerical semigroups as \emph{exploring} it. When some computation has to be made for each node (testing Wilf's conjecture, for instance) we think on it as being part of the exploration.
 	This is the method used for exploring the numerical semigroups tree, or any of its subtrees, considered in this paper (so as it is in most of the literature I am aware of (e.g.~\cite{FromentinHivert2016MC-Exploring,Bras-AmorosFernandez-Gonzalez2018MC-Computation,Elizalde2010JPAA-Improved,Bras-Amoros2018inproc-Different}); exceptions are \cite{BlancoGarcia-SanchezPuerto2011IJAC-Counting,BlancoRosales2012SF-set}).
   
  	The following remark is similar to one by Bras-Amorós~\cite[Lemma~3]{Bras-Amoros2009JPAA-Bounds}, where the  interest was in big generators (effective generators in her terminology). Her proof can easily be adapted to our case. The inclusion of one proof here serves the purposes of completeness and of giving the reader the flavour of what is in cause.
  
  	\begin{lemma}\label{lemma:removing_big_primitive_edim_of_descendant}
		Let \(S\) be a numerical semigroup and suppose that \(p\) a big primitive of \(S\).
		Let \(S'=S\setminus\{p\}\). Then \(\primitivesoper(S')\) is either \(\primitivesoper(S)\setminus\{p\}\) or \((\primitivesoper(S)\setminus\{p\})\cup\{p+\multiplicityoper(S)\}\).
	\end{lemma}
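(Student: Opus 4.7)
The plan is to prove two inclusions whose combination pins down $\primitivesoper(S')$ as stated. First, since every element of $\primitivesoper(S) \setminus \{p\}$ already lies in $S'$, Remark~\ref{rem:primitive_in_subsemigroup} applied to $S' \subset S$ gives
\[
\primitivesoper(S) \setminus \{p\} \;\subseteq\; \primitivesoper(S').
\]
All that remains is to show that the only possible additional element of $\primitivesoper(S')$ is $p + \multiplicityoper(S)$; the two cases in the statement then simply correspond to whether this element is or is not actually primitive in $S'$.

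For this, I would fix any $q \in \primitivesoper(S')$ with $q \notin \primitivesoper(S)$. Since $q$ is then a positive decomposable element of $S$, we may write $q = a + b$ with $a, b \in S \setminus \{0\}$. The primitivity of $q$ in $S'$ forbids any decomposition entirely inside $S'$, so every such pair $(a,b)$ must use the unique element of $S \setminus S'$, namely $p$. Hence $q = p + r$ with $r \in S \setminus \{0\}$, and in particular $r \geq \multiplicityoper(S)$.

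The crux is to force $r = \multiplicityoper(S)$. Assume instead that $r > \multiplicityoper(S)$. Using that $p$ is big (so $p \geq \conductoroper(S)$) together with $r - \multiplicityoper(S) \geq 1$, we get
\[
q - \multiplicityoper(S) \;=\; p + \bigl(r - \multiplicityoper(S)\bigr) \;>\; p \;\geq\; \conductoroper(S),
\]
so $q - \multiplicityoper(S)$ exceeds the conductor and therefore lies in $S$; moreover it differs from $p$, hence belongs to $S'$. Combined with $\multiplicityoper(S) \in S'$ (which holds as long as $p \neq \multiplicityoper(S)$), this produces a decomposition $q = \multiplicityoper(S) + (q - \multiplicityoper(S))$ entirely inside $S'$, contradicting the primitivity of $q$ in $S'$. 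Therefore $r = \multiplicityoper(S)$ and $q = p + \multiplicityoper(S)$, completing the argument.

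The main obstacle I anticipate is the degenerate case $p = \multiplicityoper(S)$, which can occur only when $\multiplicityoper(S) \geq \conductoroper(S)$, and hence only for the superficial semigroups $S = \mathcal{O}_{\multiplicityoper(S)}$. In that case $\multiplicityoper(S) \notin S'$ and the decomposition step above collapses; however, $S'$ is then the superficial semigroup $\mathcal{O}_{\multiplicityoper(S)+1}$, whose primitives can be read off directly. Alternatively, the statement is best interpreted as tacitly restricted to the case $p > \multiplicityoper(S)$, which is the setting in which the tree construction actually uses this lemma.
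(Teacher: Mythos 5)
Your proof is correct and, at its core, aims at the same target as the paper's: showing that the only possible new primitive of \(S'\) is \(p+\multiplicityoper(S)\). The mechanism differs, though. The paper gets the upper bound by citing the well-known fact that no primitive of a numerical semigroup exceeds its Frobenius number plus its multiplicity, applied to \(S'\) (whose Frobenius number is \(p\)); you instead re-derive that bound in this instance by exhibiting the explicit decomposition \(q=\multiplicityoper(S)+(q-\multiplicityoper(S))\) inside \(S'\) whenever \(q>p+\multiplicityoper(S)\). Your version is more self-contained and makes visible exactly where the hypotheses enter (namely \(p\ge\conductoroper(S)\) and \(\multiplicityoper(S)\in S'\)). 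More importantly, your closing caveat about \(p=\multiplicityoper(S)\) is not mere pedantry: in that case the lemma as literally stated is false. For \(S=\superficial{m}\) and \(p=m\) one has \(S'=\superficial{m+1}\), which acquires \emph{two} new primitives, \(2m\) and \(2m+1\); the paper's own argument silently breaks there because the bound \(\Frobeniusoper(S')+\multiplicityoper(S')\) becomes \(p+\multiplicityoper(S)+1\) once the multiplicity changes. (This also affects Corollary~\ref{cor:removing_big_primitive_edim_of_descendant} along the edges \(\superficial{m}\to\superficial{m+1}\).) The paper's tree construction only ever removes big primitives greater than the multiplicity within each \(\mathbf{T}_m\), so your suggested reading — that the statement is tacitly restricted to \(p>\multiplicityoper(S)\) — is the right one, and your proof is complete under that hypothesis.
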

	\begin{proof} 
		Clearly all primitives of \(S\) besides \(p\) are primitives of \(S'\). As \(p\) is the Frobenius number of \(S'\) and, as is well-known, none of the primitives of a numerical semigroup exceeds its Frobenius number plus its multiplicity, the only element of \(S'\) that is possibly not obtainable as a linear combination of the primitives of \(S\) that also belong to \(S'\) is \(p+\multiplicityoper(S)\).
	\end{proof}
 	\begin{corollary}\label{cor:removing_big_primitive_edim_of_descendant}
 		The embedding dimension of any semigroup in the classical tree of numerical semigroups is either the embedding dimension of its parent or one less.
	\end{corollary}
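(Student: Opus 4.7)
The plan is to derive the corollary directly from Lemma~\ref{lemma:removing_big_primitive_edim_of_descendant}. Every non-root numerical semigroup in the classical tree arises as $S' = S \setminus \{p\}$ for its parent $S$ and some big primitive $p$ of $S$, so the lemma applies at every parent--child edge and pins $\primitivesoper(S')$ to one of exactly two forms.

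From there I would just unpack the two cases as a cardinality count. In the first case, $\primitivesoper(S') = \primitivesoper(S) \setminus \{p\}$, which gives $\embeddingdimensionoper(S') = \embeddingdimensionoper(S) - 1$. In the second case, $\primitivesoper(S') = (\primitivesoper(S) \setminus \{p\}) \cup \{p + \multiplicityoper(S)\}$, and I would check that the added element is genuinely new, i.e., does not already lie in $\primitivesoper(S) \setminus \{p\}$. This is immediate: since $p$ and $\multiplicityoper(S)$ are both nonzero elements of $S$, the sum $p + \multiplicityoper(S)$ is decomposable in $S$ and hence is not a primitive of $S$ at all. Therefore the union is disjoint and $\embeddingdimensionoper(S') = \embeddingdimensionoper(S)$.

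Since the lemma exhausts all possibilities, $\embeddingdimensionoper(S')$ is either $\embeddingdimensionoper(S)$ or $\embeddingdimensionoper(S) - 1$, which is the claim; the root $\mathbb{N}$ is vacuous as it has no parent. I do not anticipate a real obstacle here: all of the combinatorial content has already been packaged inside Lemma~\ref{lemma:removing_big_primitive_edim_of_descendant}, and the corollary is essentially a cardinality reading of that lemma together with the elementary observation that $p+\multiplicityoper(S)$ is never primitive in the parent.
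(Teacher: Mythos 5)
Your proof is correct and follows the same route as the paper, which presents the corollary as an immediate consequence of Lemma~\ref{lemma:removing_big_primitive_edim_of_descendant} without writing out the cardinality count. Your extra check that $p+\multiplicityoper(S)$ is decomposable in $S$ and hence not already among $\primitivesoper(S)\setminus\{p\}$ is exactly the small observation needed to make the count rigorous.
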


	As an immediate consequence we have that the embedding dimension decreases along a path in the semigroups tree. For further reference, we state the following remark.

	\begin{remark}\label{rem:edim_of_descendant}  
		Let \(S\) and \(T\) be numerical semigroups with \(S\succeq T\). Then \(\embeddingdimensionoper(S)\ge\embeddingdimensionoper(T)\).
	\end{remark}

	As another immediate consequence of Corollary~\ref{cor:removing_big_primitive_edim_of_descendant} we have that for a path of some length \(k\) in the semigroups tree starting in a  semigroup \(S\), the semigroup \(S'\) at the other end of the path has embedding dimension \(\embeddingdimensionoper(S')\ge\embeddingdimensionoper(S)-k\).
	The equality can occur, for any given positive integer~\(k\), as stated in the following remark. It is a consequence of Remark~\ref{rem:arbitrary_length_sequence_decreasing_edim}.

	\begin{remark}\label{rem:arbitrary_length_sequence_decreasing_edim}
		Let \(k\) be a positive integer. There exists a numerical semigroup \(S\) that has a descendant \(S'\) such that \(\embeddingdimensionoper(S')=\embeddingdimensionoper(S)-k\) and any semigroup in the path from \(S\) to \(S'\) (besides \(S\) itself) has embedding dimension one less than its parent.
		\smallskip
		
	    An explicit example follows.
		Set \(m=2k+1\), and let \[S=\langle\{m\}\cup\{2m+1,\ldots,2m+k\}\cup\{3m+k+1,\ldots,3m+2k\}\rangle.\] 
		Consider the sequence of length \(k\) defined recursively as follows: 
		\begin{itemize}[noitemsep]
			\item \(S_0=S\),
			\item \(S_{i+1}=S_i\setminus\{3m+k+i\}, i\in\{1,\ldots,k\}\).
		\end{itemize}
		As \(3m+k+i+m=(2m+i)+(2m+k)\), we conclude that the embedding dimension decreases along the sequence.
	
	Figure~\ref{fig:descendant_smaller_edim} illustrates the above example with \(k=4\).

	\begin{figure}[h]
		\centering
		\begin{subfigure}[b]{0.375\textwidth}
			\includegraphics[width=\textwidth]{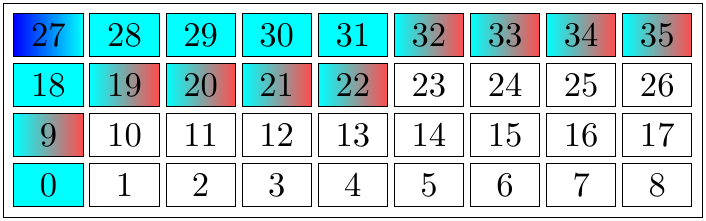}
			\caption{{\tiny\(S=\langle 9,19,\ldots,22,32,\ldots,35\rangle\)}}
		\end{subfigure}
		\begin{subfigure}[b]{0.2\textwidth}
			\includegraphics[width=\textwidth]{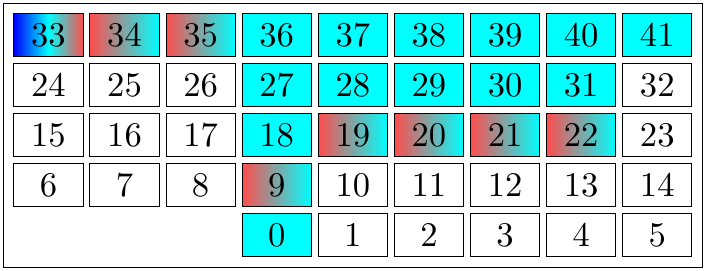}
			\includegraphics[width=\textwidth]{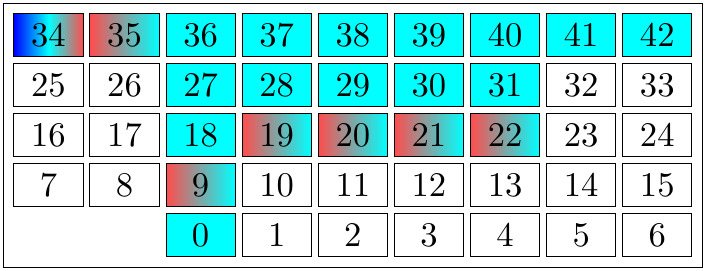}
			\includegraphics[width=\textwidth]{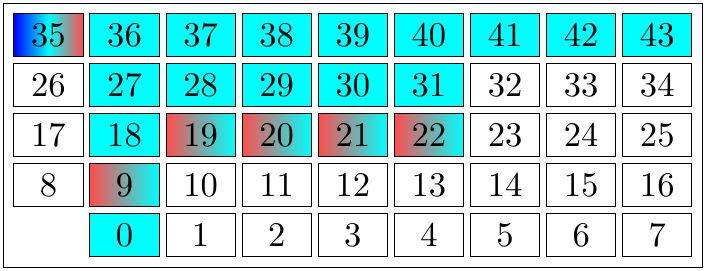}
			\caption{{\tiny\(S_2,S_2,S_4\)}}
		\end{subfigure}
		\begin{subfigure}[b]{0.375\textwidth}
			\includegraphics[width=\textwidth]{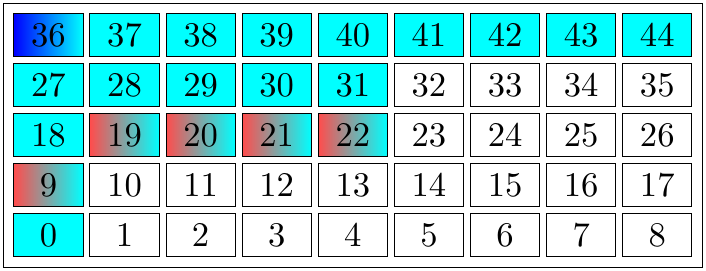}
			\caption{{\tiny\(S_4=S\setminus\{32,33,34,35\}\)}}
		\end{subfigure}
		\caption{A descendant with much smaller embedding dimension. \label{fig:descendant_smaller_edim}}
	\end{figure} 		
\end{remark}

\newpage
\section{A search space}\label{sec:counting_search_space}
In this section we identify a good search space for probing conjectures on numerical semigroups or finding examples of semigroups satisfying some given property.
Such a space should fulfil some requirements, for instance being large and not consist of semigroups satisfying some (possibly restrictive) property known a priori.

A subsection is devoted to recall some known (asymptotic and experimental) results and to the presentation of some expected values. This motivates the choice of the search space, which is the subject of the second and last subsection.

\subsection{Counting numerical semigroups by genus}\label{subsec:counting_by_genus}

	Related to counting by genus, some conjectures by Bras-Amorós~\cite{Bras-Amoros2008SF-Fibonacci} captured the interest of many researchers. One of her conjectures was (the presently known fact) that the sequence \(N(g)\) of the numbers of numerical semigroups of genus \(g\) shows an asymptotic Fibonacci like behaviour. This implies an exponential growth (from some unknown point on), although (to the best of my knowledge) it has not yet even been proved that the sequence \(N(g)\) grows with \(g\).
	\subsubsection{Asymptotic results due to Zhai}\label{subsubsec:asymptotic_Zhai}

  		Denote by \(N(g)\) the number of numerical semigroups of genus \(g\). The following result was conjectured by Bras-Amorós~\cite{Bras-Amoros2008SF-Fibonacci} and later proved by Zhai~\cite{Zhai2012SF-Fibonacci}.
  
  		As already referred, the survey by Kaplan~\cite{Kaplan2017AMM-Counting} gives an excellent account on counting numerical semigroups by genus, including a good outline of the strategy followed by Zhai.
  		\begin{theorem}\cite{Zhai2012SF-Fibonacci}\label{th:zhai-fibonacci}
  			\(\displaystyle \frac{N(g)}{N(g-1)} \to_g \varphi\), where \(\varphi\) is the golden number.
  		\end{theorem}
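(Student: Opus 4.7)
The plan is to follow Zhai's original strategy, which organizes numerical semigroups by depth $\depthoper(S) = \lceil \conductoroper(S)/\multiplicityoper(S)\rceil$. Let $N_d(g)$ denote the number of semigroups of genus $g$ and depth exactly $d$, so $N(g) = \sum_{d\ge 0} N_d(g)$. The proof will proceed in three stages: isolate the contribution of ``shallow'' semigroups (depth at most $3$), show that it already grows like $\varphi^g$, and show that the remaining ``deep'' semigroups contribute negligibly.

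First I would analyze shallow semigroups directly. For depth $2$, a semigroup is determined by its multiplicity $m$ and the choice of which elements of $[m+1, 2m-1]$ and $[2m+1, C-1]$ belong to it, subject to closure under addition. Encoding the set of gaps in $[1, 2m-1]$ as a binary word of length $2m-1$ with no two ``consecutive'' forbidden pattern (because if $x$ is a gap then $2m-x$ must also behave compatibly with $m \in S$), one obtains a Fibonacci-like recursion. A careful bookkeeping, treating depths $2$ and $3$ together and sieving by Remark~\ref{rem:bound_for_genus_in_terms_of_conductor}, should yield $N_2(g) + N_3(g) \sim S\,\varphi^g$ for some explicit constant $S > 0$.

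Second, I would bound the deep contribution $\sum_{d\ge 4} N_d(g)$. For a semigroup of depth $d$ and multiplicity $m$ one has $C \ge (d-1)m + 1$ and hence, by Remark~\ref{rem:bound_for_genus_in_terms_of_conductor}, $g \ge C/2 \ge (d-1)m/2$. This forces $m \le 2g/(d-1)$, which in turn severely restricts the shape of the Apéry set of $S$ modulo $m$: for fixed small $m$, the count of numerical semigroups with multiplicity $m$ and genus $g$ is a polynomial in $g$, and summing over admissible $m$ for each $d \ge 4$ yields a bound of the form $O(c^g)$ with $c < \varphi$. Hence $\sum_{d\ge 4} N_d(g) = o(\varphi^g)$.

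Combining the two estimates yields $N(g) \sim S\varphi^g$, from which $N(g)/N(g-1) \to \varphi$ follows immediately. The hardest part is the first stage: proving a sharp Fibonacci-like asymptotic for $N_2(g) + N_3(g)$ rather than just an exponential lower bound. The natural obstruction is that encoding by gap-patterns in $[1, 2m-1]$ introduces subtle compatibility conditions (coming from sums lying in $[m+1, 2m-1]$ being forced to be elements of $S$), so the bijection to a clean Fibonacci-counted family is only approximate, and a further error-term analysis, controlling which ``bad'' patterns actually correspond to semigroups, is required to extract the precise constant and asymptotic.
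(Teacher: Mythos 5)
First, a point of context: the paper does not prove this statement at all; Theorem~\ref{th:zhai-fibonacci} is imported from Zhai's work \cite{Zhai2012SF-Fibonacci}, so there is no in-paper proof to compare against. Your outline does follow the broad strategy of Zhai's actual argument (count the shallow semigroups sharply, show the deep ones are negligible), but as written it is a proof plan whose two load-bearing steps are left open, not a proof.

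The more serious gap is in your second stage. From $d\ge 4$ you correctly get $\conductoroper\ge 3\multiplicityoper+1$ and hence $\multiplicityoper<2g/3$, but this does not make $\multiplicityoper$ ``fixed'' or ``small'': it can grow linearly with $g$, and the fact that for \emph{fixed} $m$ the number of numerical semigroups of multiplicity $m$ and genus $g$ grows polynomially in $g$ gives no uniform control when $m$ ranges up to $2g/3$ --- summing polynomial bounds whose degree and constants depend on $m$ over linearly many values of $m$ can perfectly well produce something of order $\varphi^g$ or larger. Bounding $\sum_{d\ge 4}N_d(g)$ by $o(\varphi^g)$ is precisely the content of Zhao's conjecture, proved by Zhai and recorded in the paper as Theorem~\ref{th:zhai-most-sgps-have-large-multiplicity}; it is the hard half of the theorem, not a routine sieve, so your argument for it would fail as stated. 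Your first stage has the same character: you acknowledge yourself that the correspondence between depth-$\le 3$ semigroups and Fibonacci-counted binary words is ``only approximate'' and that a further error-term analysis ``is required'', so the sharp asymptotic $N_2(g)+N_3(g)\sim S\varphi^g$ is asserted rather than established. With both key estimates deferred, the proposal cannot be accepted as a proof.
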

		The golden number \(\varphi=\frac{1+\sqrt{5}}{2}\) is approximately \(1.618\). In particular, \(\varphi > 1.61\). Thus Zhai's result implies that from a certain (unknown) point \(g_0\) on one has that, for \(g>g_0\), \(N(g)> 1.61 N(g-1)\). 

		Denote by \(t(g)\) the number of numerical semigroups of genus~\(g\) such that \(\conductoroper\le 3\multiplicityoper\).
		As an ingredient to be used in the proof of Theorem~\ref{th:zhai-fibonacci},
		Zhai proved the following result, which had been conjectured by Zhao~\cite{Zhao2010SF-Constructing}. 
		It says that asymptotically as the genus goes to infinity, the proportion of numerical semigroups such that \(\conductoroper\le 3\multiplicityoper\) tends to \(1\).

		\begin{theorem}\cite{Zhai2012SF-Fibonacci}\label{th:zhai-most-sgps-have-large-multiplicity}
			With the notation introduced, \(\lim_{g\to\infty} \frac{t(g)}{N(g)}=1\).
		\end{theorem}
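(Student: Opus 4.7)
The plan is to stratify semigroups by depth. Since \(\depthoper(S) = \lceil \conductoroper(S)/\multiplicityoper(S)\rceil\), the condition \(\conductoroper\le 3\multiplicityoper\) is equivalent to \(\depthoper(S)\le 3\). Writing \(N_d(g)\) for the number of semigroups of genus \(g\) with depth exactly \(d\), we have \(t(g) = \sum_{d\le 3} N_d(g)\), and the statement reduces to proving that \(\sum_{d\ge 4} N_d(g) = o(N(g))\) as \(g\to\infty\).

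First, I would bound the deep contribution from above. A semigroup of depth at least \(4\) satisfies \(\conductoroper \ge 3\multiplicityoper + 1\), and combining this with Remark~\ref{rem:bound_for_genus_in_terms_of_conductor} yields \(\multiplicityoper \le (2\genusoper - 1)/3\). For each fixed multiplicity \(m\), a numerical semigroup is determined by its Apéry set modulo \(m\), i.e.\ by an \((m-1)\)-tuple of positive integers obeying the usual compatibility inequalities, with the genus read off as a linear form in these entries. Counting such tuples with genus exactly \(g\) gives a bound on the number of semigroups of genus \(g\) of a prescribed (small) multiplicity; summing over \(m\le 2\genusoper/3\) produces an upper bound on \(\sum_{d\ge 4} N_d(g)\).

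Second, I would produce a matching lower bound on \(N(g)\) of properly exponential type. A natural route is to exhibit a rich family of depth-at-most-\(3\) semigroups of genus \(g\) by freely prescribing the left primitives inside the interval \([\multiplicityoper, 3\multiplicityoper]\); this is the same mechanism that underlies the Fibonacci-like growth of Theorem~\ref{th:zhai-fibonacci}, and one expects a growth constant close to the golden ratio \(\varphi\).

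The hard part will be making the upper bound on the deep count tight enough to beat this exponential lower bound, since naive counting of Apéry tuples is far too loose. The heart of Zhai's argument is a refined analysis of the classical tree showing that deep subtrees simply cannot branch quickly enough to keep up: roughly, passing from genus \(g-1\) to genus \(g\) maps deep semigroups into a strictly smaller combinatorial population, whereas shallow semigroups spawn many shallow descendants. Formalizing this via a well-chosen (nearly) injective map from deep genus-\(g\) semigroups into a controllable auxiliary set, and then threading the inductive interplay with Theorem~\ref{th:zhai-fibonacci} (which is proved concurrently and supplies the denominator's growth rate), is where I expect the main difficulty to lie.
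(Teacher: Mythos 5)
This theorem is one the paper imports from Zhai's work and does not prove, so the only question is whether your outline itself constitutes a proof. It does not: there is a genuine gap, and you essentially name it yourself. The reduction is fine --- \(\depthoper(S)\le 3\) is indeed equivalent to \(\conductoroper\le 3\multiplicityoper\), a depth-\(\ge 4\) semigroup of genus \(g\) has \(\multiplicityoper\le(2g-1)/3\) by Remark~\ref{rem:bound_for_genus_in_terms_of_conductor}, and an exponential lower bound \(N(g)\ge c\,\varphi^{g}\) is available by the standard elementary construction (depth-\(\le 2\) semigroups \(\{0,m\}\cup A\cup[2m,\rightarrow)\) with \(A\subseteq[m+1,2m-1]\) arbitrary, giving a sum of binomial coefficients of Fibonacci size). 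But the entire content of the theorem is the matching \emph{upper} bound on the deep population, and your proposal supplies none: the Apéry-tuple count you sketch gives roughly \(\binom{g+m-2}{m-2}\) semigroups of multiplicity \(m\) and genus \(g\), which for \(m\) near \(2g/3\) is exponentially \emph{larger} than \(\varphi^{g}\), as you concede; and the ``well-chosen nearly injective map from deep genus-\(g\) semigroups into a controllable auxiliary set'' is named but never constructed. That map is the proof. What you have written is a correct statement of what needs to be proved together with an accurate description of why the obvious attempts fail, which is not the same thing.

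A second, more localized problem: you propose to ``thread the inductive interplay with Theorem~\ref{th:zhai-fibonacci}, which \ldots{} supplies the denominator's growth rate.'' The paper explicitly records that Theorem~\ref{th:zhai-most-sgps-have-large-multiplicity} is an \emph{ingredient} in Zhai's proof of Theorem~\ref{th:zhai-fibonacci}, so any argument that uses the asymptotic ratio \(N(g)/N(g-1)\to\varphi\) to establish the present statement is circular as organized. The only input about \(N(g)\) you may safely assume is the elementary lower bound of order \(\varphi^{g}\) described above, which is independent of both theorems; everything else must come from the missing upper bound on semigroups with \(\conductoroper>3\multiplicityoper\).
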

		This result led Eliahou and Fromentin~\cite{EliahouFromentin2019ae-Gapsets} to introduce the following terminology.
		\begin{definition}\label{def:generic}
			A numerical semigroup \(S\) satisfying \(\conductoroper(S)\le 3\multiplicityoper(S)\) is called \emph{generic}. 
		\end{definition}

		Perhaps other equally important families of numerical deserve the same name (see Problem~\ref{prob:generic_embedding_dimension}). Nevertheless, this terminology, which is particularly relevant in Section~\ref{sec:motivating}, will be used along this work.
	\subsubsection{Experimental results}
		The number \(g=70\) is the biggest one for which \(N(g)\) is presently known (to the best of my knowledge). It was obtained by Fromentin and Hivert and they made it public through a page on GitHub (see~\cite{FromentinHivert-github-page}).
		\begin{remark}\label{rem:N(70)}
			\(N(70)=\num{1607394814170158}\).
		\end{remark}
		Let \(\Theta(\Gamma)=\sum_{g=0}^{\Gamma}N(g)\) be the number of numerical semigroups with genus not greater than \(\Gamma\). The number \(\Theta(70)\) can be obtained summing up the numbers \(N(g)\) \((0\le g\le 70)\) that appear in the web-page referred above~\cite{FromentinHivert-github-page}.
		\begin{remark}\label{rem:theta(70)}	
			\(\Theta(70) = \num{4198294061955750}\).
		\end{remark}
		Let \(\Gamma_0\) be the biggest \(g\) for which \(N(g)\) is known. Note that, by Remark~\ref{rem:theta(70)}, it is not smaller than \(70\).
		The following question is related to counting.
		\begin{question}\label{challenge:genus}
			Let \(\Gamma\) be a positive integer non smaller than \(\Gamma_0+1\).
			What is the number of numerical semigroups with genus \(\Gamma\) (or up to \(\Gamma\))?
		\end{question}

		What values for \(\Gamma\) make the above problem really challenging? 
		This leads to the problem of giving an approximation to the expected time required to compute \(N(\Gamma)\) (or \(\Theta(\Gamma)\)).

	\subsubsection{(Approximated) expected numbers}
		As a naive approximation one may think that the time involved in the computations is constant for each semigroup. Better approximations, namely that take into account the magnitude of the integers that can be handled fast by, say, a \(64\) bit processor, are beyond the scope of this paper. Using the referred naive approximation, this problem on computing time is reduced to find reasonable approximations to the number of semigroups involved, and the time required by some particular computation.
		\medskip

		\emph{Assumption.}
		Assume that, for some positive integer \(\kappa\), the number \(N(\kappa)\) is known and also that \(N(g+1)\ge 1.61 \cdot N(g)\), for \(g\ge \kappa\). (This is a reasonable assumption, by the result of Zhai referred above, since \(1.61\) is smaller than the golden ratio). Then \(N(\kappa+\varepsilon)\ge 1.61^{\varepsilon}N(\kappa)\), for any nonnegative integer~\(\varepsilon\).
		As already referred the sequence of the numbers of numerical semigroups counted by genus grows exponentially (from a certain point on).
		\medskip

		The remaining part of this subsection is about approximations for \(N(100)\) and \(\theta(100)\).

		Taking \(\varepsilon=30\) one has the following approximation: \(1.61^{30} \sim \num{1602419}\).

		Suppose that for \(\kappa=70\) the above assumption holds. Then, using remark~\ref{rem:N(70)}, one gets the approximated expected value:

		\begin{remark}\label{rem:expected_N(100)}
			\(N(100) \sim \num{1602419} \cdot  \num{1607394814170158}
			\sim 	
			\num{25} \cdot 10^{20}\).
		\end{remark}

		The same kind of reasoning can be used to obtain an approximation for the number of semigroups up to a certain genus:

		\(\Theta(\kappa+\varepsilon)=\Theta(\kappa)+(N(\kappa+1)+\cdots +N(\kappa+\varepsilon))= \Theta(\kappa)+N(\kappa)(1.61+\cdots + 1.61^{\varepsilon})\) 

		Observe that one has the following approximation \(1.61+\cdots + 1.61^{30}\sim \num{2626915}\)

		Again, suppose that for \(\kappa=70\) the above assumption holds, and use Remark~\ref{rem:theta(70)}. The following expected value for \(\Theta(100)\) is obtained.


		\begin{remark}\label{rem:expected_theta(100)}
			\(\Theta(100) \sim \num{4198294061955750} + \num{1607394814170158} \cdot \num{2626915}
			\sim 
			\num{42} \cdot 10^{20}\).
		\end{remark}
	
	\subsubsection{Comments on the time required by computations}\label{subsubsec:time_for_genus_100}
		Fromentin and Hivert~\cite{FromentinHivert2016MC-Exploring} computed \(N(67)\) in \(18\) days. They provide complexity details and a full analysis of the technologies used. Let us assume that better technologies are available, faster algorithms are found or that some improvements in the implementation can be made. (See Section~\ref{sec:technical_details} for a reduction that had not been used and can expectedly reduce the time in about one half.) Being rather optimistic, suppose that one is able to compute \(N(70)\) in one day. From the above discussion one sees that computing \(N(100)\) would take \(\num{1602419}/365\sim 4390\) years. One can not be so optimistic as to expect that during this time no power failure will occur.
	\subsection{A search space}\label{subsec:a_search_space}
		This subsection is devoted to identify a reasonable search space for probing conjectures, which is the main aim of our project.
		Such a space should fulfil, at least, the following two requirements: be as big and be as free as possible. These requirements are to be seen in the following sense: consist of a large number of numerical semigroups which globally satisfy very few properties, apart from being numerical semigroups. 

		Two bad examples follow. The first of them is simple and this paper already contains enough information to completely understand it. The second is more elaborate and a complete understanding requires the knowledge of deeper results (some theoretical and one experimental); a reader who is not familiar with this topic may have to wait until Section~\ref{subsec:eliahou} to fully understand it.
		\begin{example}
			\begin{enumerate}
				\item A huge subset of the infinite set of numerical semigroups with embedding dimension \(2\) is possibly a bad choice: although consisting of a big number of semigroups, it is useless in the search for counter-examples to Wilf's conjecture, due to Remark~\ref{rem:Silvester_implies_Wilf}.
				\item A huge set of generic numerical semigroups (see Definition~\ref{def:generic}) would at first sight be a reasonable choice, since asymptotically all numerical semigroups are generic.
				But it is useless in the search for non-Eliahou semigroups, by Theorem~\ref{th:eliahou-large-mult}. Example~\ref{ex:Eliahou-Fromentin-examples} gives some examples of non-Eliahou semigroups and this suggests that there are possibly better search spaces.
			\end{enumerate}   
		\end{example} 

		Next we give an example that we consider to be good. It is the one that will be used in this paper.
		Denote by \(\mathcal{S}\) the set of numerical semigroups and, for each \(g\in \mathbb{N}\), denote by \(\mathcal{S}_g\) the set of numerical semigroups of genus \(g\) and by \(\mathcal{S}_{\le g}\) the set of numerical semigroups of genus up to \(g\).

		\begin{example}\label{ex:search_space_genus}
			Fixing some integer \(\Gamma\) between \(60\) and \(100\), say, the set \(\mathcal{S}_{\le \Gamma}\) seems to be a reasonable search space to consider. It is big, by the discussed above. Also, we are not aware of any (non artificially produced) family of numerical semigroups that has been considered by the researchers in the area all of whose elements have genus greater than \(100\), which leads us to consider \(\mathcal{S}_{\le \Gamma}\) a rather free space. Furthermore, 
			as \(\mathcal{S}=\cup_{g\in \mathbb{N}}\mathcal{S}_g\), increasing \(\Gamma\), if necessary, any non universal property has a counterexample in \(\mathcal{S}_{\le \Gamma}\).
		\end{example} 
		
		Another space could be obtained using the conductor (instead of the genus). Note that there are algorithms to determine the numerical semigroups with a given conductor (or a given Frobenius number). One implementation of such an algorithm may be found in the \textsf{numericalsgps} package (the command is \texttt{NumericalSemigroupsWithFrobeniusNumber}). Due to Remark~\ref{rem:bound_for_genus_in_terms_of_conductor} these spaces are quite similar. 
		
		The choice made in what concerns this project has been based on the genus: the search space will be  \(\mathcal{S}_{\le \Gamma}\), for some \(\Gamma\) between \(60\) and \(100\).
		
		The only reasons for this choice, between the two options just pointed out as being good, are that (to the best of my knowledge) more easily applicable results are known and more (high performance) software has been developed for computations dealing with the genus.
		
		Observe that \(\mathcal{S}_{\le \Gamma}\) consists of the semigroups in the upper part of the semigroups tree truncated at level \(\Gamma\) (which corresponds to cutting the nodes that consist of semigroups of genus bigger than \(\Gamma\)). We denote this finite tree by \(\mathbf{T}^{\Gamma}\). 

		\medskip 

		We do not know whether the search space chosen, \(\mathcal{S}_{\le \Gamma}\), is more adequate than the set of numerical semigroups with Frobenius number up to \(f\), \(\mathcal{S}_{[\le f]}\), in what concerns possible reductions or even the performance, in practice, of the algorithms involved.
		Trying to get a better understanding of this aspect is left as problem.

		\begin{problem}
			Mimic our project (for which this paper is part) by taking the set \(\mathcal{S}_{[\le f]}\) as search space instead of the set \(\mathcal{S}_{\le \Gamma}\) (for some positive integers \(f\) and \(\Gamma\)).
		\end{problem}

\section{Motivating problems and results}\label{sec:motivating}
This section includes some motivating and very challenging problems. All of them have connections with the first problem posed by Wilf and are currently subject of active research.

The first problem is precisely Wilf's conjecture. The second one is a problem posed by Eliahou in a paper where he proved that no generic semigroup disproves Wilf's conjecture. Then we dedicate a subsection to the problem of determining for which numerical semigroups does Wilf's equality hold. Another problem is of asymptotical nature: a positive answer would show that a recent result by Eliahou showing that semigroups with embedding dimension no smaller than a third or the multiplicity are Wilf is another asymptotic proof of Wilf's conjecture.

All these problems have associated computational challenges, some of which are then explicitly posed.
These challenges will, to some extent, be addressed in the remainder of the paper.

\subsection{Wilf's conjecture}\label{subsec:wilf}
  	To a numerical semigroup~\(S\) one can associate the following number, denoted \(\wilfoper(S)\) and called the \emph{Wilf number} of~\(S\):
\begin{equation}\label{eq:wilf-number}
	 \wilfoper(S) = \lvert \primitivesoper(S) \rvert\lvert \leftsoper(S) \rvert-\conductoroper(S).
\end{equation}
 
We say that a numerical semigroup is a \emph{Wilf semigroup}  (or simply that it is \emph{Wilf}) if and only if its Wilf number is non negative. \emph{Wilf's conjecture} can be stated as follows (it is a simple exercise to verify that Wilf's conjecture is valid if and only if the first part of Wilf's Problem (a), stated in Section~\ref{subsec:Wilf_questions} has a positive answer):
\begin{conjecture}[Wilf, 1978]\label{conj:wilf}
 	Every numerical semigroup is a Wilf semigroup.
\end{conjecture}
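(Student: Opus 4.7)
The plan is to attack $\wilfoper(S) = \lvert\primitivesoper(S)\rvert \cdot \lvert\leftsoper(S)\rvert - \conductoroper(S) \ge 0$ by a case analysis on the depth $\depthoper(S) = \lceil \conductoroper/\multiplicityoper\rceil$, since the conjecture behaves very differently in the shallow and deep regimes.

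I would dispose of the shallow cases first. For $\depthoper(S) \le 1$ the semigroup is either $\mathbb{N}$ or some ordinary $\superficial{m}$, where $\conductoroper = \multiplicityoper$, $\leftsoper = \{0\}$, and $\primitivesoper = \{m, m+1, \ldots, 2m-1\}$, giving $\wilfoper = m\cdot 1 - m = 0$; an elementary Apéry-set count then settles $\depthoper = 2$. For $\depthoper \le 3$ (the generic case) I would invoke Eliahou's theorem that every generic numerical semigroup is Wilf, referred to in Section~\ref{sec:motivating}. Combined with Eliahou's companion theorem that $\embeddingdimensionoper(S) \ge \multiplicityoper(S)/3$ already forces $\wilfoper(S) \ge 0$, the remaining territory is the regime where $\depthoper \ge 4$ and $\embeddingdimensionoper$ is small relative to $\multiplicityoper$.

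To attack this remaining regime I would turn to the Apéry set $\{0, w_1, \ldots, w_{\multiplicityoper-1}\}$ of $S$ with respect to $\multiplicityoper$, where $w_i$ is the smallest element of $S$ congruent to $i$ modulo $\multiplicityoper$. The left elements split as a disjoint union over residue classes modulo $\multiplicityoper$, giving
\[ \lvert\leftsoper(S)\rvert = \sum_{i \,:\, w_i < \conductoroper} \left( \lfloor (\conductoroper - 1 - w_i)/\multiplicityoper \rfloor + 1 \right), \]
while the primitives of $S$ in class $i$ are precisely those $w_i > 0$ that cannot be written as $w_j + w_k$ with $j, k > 0$. The strategy would then be to exhibit a weighted injection from a set of cardinality $\conductoroper$ into $\primitivesoper(S) \times \leftsoper(S)$, obtained by pairing each residue class with a carefully chosen primitive, exploiting the identity $\max_i w_i = \conductoroper + \multiplicityoper - 1$ together with a convexity/rearrangement bound on the class sizes $\lfloor(\conductoroper - w_i)/\multiplicityoper\rfloor$.

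The main obstacle, and the reason Wilf's conjecture has resisted proof since 1978, is exactly this intermediate regime: every class-by-class counting bound turns out to be essentially tight, and the five non-Eliahou numerical semigroups of Fromentin recalled later in the paper show that the Eliahou-style local estimate can even be strictly negative, so any class-wise argument must be supplemented by a genuinely global combinatorial ingredient — presumably one that exploits the poset structure of the whole Apéry set rather than one class at a time. Constructing such an injection, equivalently giving a combinatorial assignment of primitive/left pairs to elements of $[0, \conductoroper - 1]$, is where I expect the real difficulty to lie, and is also the reason the present paper sensibly focuses on a computational attack combined with hereditary cutting properties rather than a direct proof of the full conjecture.
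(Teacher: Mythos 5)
The statement you are asked about is Conjecture~\ref{conj:wilf}, and the paper contains no proof of it: Wilf's conjecture is open, and the entire point of the paper is to develop tree-trimming techniques for \emph{computationally probing} it up to large genus, not to prove it. So there is no ``paper's own proof'' to match, and your proposal cannot be judged as an alternative route to an established result.

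Taken on its own terms, your text is an attack plan rather than a proof, and it contains a genuine gap that you yourself identify: after correctly disposing of the known regimes --- depth at most $3$ via Eliahou's Theorem~\ref{th:large-mult-are-wilf} (together with Proposition~\ref{th:eliahou-large-mult} and Proposition~\ref{prop:eliahou-implies-wilf}), and $\embeddingdimensionoper(S)\ge \multiplicityoper(S)/3$ via Theorem~\ref{th:large-ed-are-wilf} --- you are left with the regime $\depthoper\ge 4$ and $\embeddingdimensionoper < \multiplicityoper/3$, and there the argument stops. The ``weighted injection from a set of cardinality $\conductoroper$ into $\primitivesoper(S)\times\leftsoper(S)$'' is described but never constructed, and the ``genuinely global combinatorial ingredient'' that would have to supplement the class-by-class Ap\'ery counts is named as a desideratum, not supplied. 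Since that is precisely the content of the conjecture in the hard regime, the proposal proves nothing beyond what the cited results of Eliahou, Kaplan and Sammartano already give. If you want to contribute in the spirit of this paper, the productive direction is not to claim a proof but to identify hereditary properties $\mathcal{P}$ implying Wilf (as in Section~\ref{subsec:cutting_semigroups}) that further shrink the unverified region $\depthoper\ge 4$, $\embeddingdimensionoper<\multiplicityoper/3$ of the truncated tree $\mathbf{T}^{\Gamma}$.
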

  
As mentioned, the conjecture is a quite popular topic among the researchers in the area, and considerable work has been done.
For the purpose of this paper, I just refer three particular cases, which happen to be of high relevance. References to a number of other particular cases can be found in~\cite{Delgado2019ae-Conjecture}.
 
Fromentin and Hivert verified computationally the conjecture for the \(\num{33474094027610}\) semigroups of genus up to \(60\).
 
\begin{proposition}[\cite{FromentinHivert2016MC-Exploring}]\label{prop:genus-60-are-Wilf}
 	All numerical semigroups of genus up to \(60\) are Wilf.
\end{proposition}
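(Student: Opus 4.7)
The plan is computational: enumerate every numerical semigroup of genus at most $60$ and verify directly that its Wilf number is non-negative. The natural vehicle is the truncated tree $\mathbf{T}^{60}$ introduced in Section~\ref{subsec:a_search_space}, whose vertex set is precisely $\mathcal{S}_{\le 60}$ and whose edges correspond to removing a big primitive from a parent.

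First I would organise the traversal as a disjoint union of the subtrees $\mathbf{T}_m$; since the root $\superficial{m}$ of $\mathbf{T}_m$ already has genus $m-1$, only multiplicities $m\le 61$ can host a node of genus at most $60$, so finitely many subtrees are involved and each may be explored by an independent process. Within $\mathbf{T}_m$ I would run a depth-first search that halts whenever the current genus reaches $60$, a legitimate cutoff by Remark~\ref{rem:conductor_of_descendant}, which guarantees that the genus is strictly increasing along any descent. At each node, Lemma~\ref{lemma:removing_big_primitive_edim_of_descendant} makes child generation cheap: upon removing a big primitive $p$, the new primitive set is obtained from the old by deleting $p$ and possibly adjoining $p+\multiplicityoper$, the only test being whether $p+\multiplicityoper$ is a sum of two smaller nonzero elements of the child. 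With this update rule in place, the three quantities $\lvert\primitivesoper(S)\rvert$, $\lvert\leftsoper(S)\rvert$ and $\conductoroper(S)$ can all be maintained incrementally along each edge, so the Wilf number $\wilfoper(S)=\lvert\primitivesoper(S)\rvert\cdot\lvert\leftsoper(S)\rvert-\conductoroper(S)$ is evaluated in constant amortised time per node.

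The per-node verification is then trivial: compute $\wilfoper(S)$ and assert $\wilfoper(S)\ge 0$; a single violation would abort the search with a counter-example. Correctness of the enumeration rests on the classical fact recalled in Section~\ref{subsec:the_tree} that every numerical semigroup occurs exactly once as a vertex of the tree rooted at $\mathbb{N}$, together with the strict monotonicity of the genus along descents, which ensures that no node of $\mathcal{S}_{\le 60}$ is reached via a parent of genus greater than $60$ and hence cut off prematurely.

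The principal obstacle is not mathematical but computational: $\mathcal{S}_{\le 60}$ contains over $3\cdot 10^{13}$ semigroups, so feasibility depends entirely on a low-level implementation able to sustain constant-time edge transitions and to schedule the subtrees $\mathbf{T}_m$ across many cores. This is precisely the engineering effort carried out in the cited work of Fromentin and Hivert, where compact encodings of a semigroup (via its decomposition multiset or a small-element bitmask) reduce each tree transition to a handful of machine-word operations, bringing the total within reach.
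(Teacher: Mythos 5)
Your proposal is essentially the paper's own treatment: the paper gives no proof of Proposition~\ref{prop:genus-60-are-Wilf}, citing instead the exhaustive computational verification by Fromentin and Hivert over the \(\num{33474094027610}\) semigroups of genus up to \(60\), and your tree traversal with an incremental per-node Wilf-number check is precisely that computation. The mathematical content (completeness of the semigroup tree, monotonicity of the genus, the primitive-update rule) is the same; the rest is the engineering you correctly defer to the cited work.
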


Kaplan~\cite{Kaplan2012JPAA-Counting} proved that all numerical semigroups with \(\conductoroper(S)\le 2\multiplicityoper(S)\) are Wilf. Using as main ingredient a theorem of Macaulay on the growth of Hilbert functions of standard  graded algebras, Eliahou extended that result to the case \(\conductoroper(S)\le 3\multiplicityoper(S)\):
 
\begin{theorem}[{\cite[Cor.~6.5]{Eliahou2018JEMS-Wilfs}}]\label{th:large-mult-are-wilf}
	Generic semigroups are Wilf.
\end{theorem}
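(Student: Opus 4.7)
The plan is to stratify $\leftsoper(S)$ and $\primitivesoper(S)$ by multiples of the multiplicity $m = \multiplicityoper(S)$. Set $c = \conductoroper(S)$. Kaplan's theorem already handles the case $c \le 2m$, so we may assume $2m < c \le 3m$. The standard bound that no primitive exceeds $\Frobeniusoper(S) + m$ confines every primitive to $[m, 4m)$, while every left element lies in $[0, c) \subseteq [0, 3m)$. Define the layers $L_i = \leftsoper(S) \cap [im, (i+1)m)$ for $i = 0, 1, 2$ and $P_i = \primitivesoper(S) \cap [im, (i+1)m)$ for $i = 1, 2, 3$. Then $L_0 = \{0\}$, and since $c > 2m$, every element of $S$ lying in $[m, 2m)$ is both a left element and a primitive (the smallest nontrivial sum of nonzero elements of $S$ is $2m$), so $L_1 = P_1$.

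Next, every decomposable element $x \in L_2$ satisfies $x = a + b$ with $a, b \in S \setminus \{0\}$, and the bound $a + b < 3m$ forces $a, b \in [m, 2m) \cap S = P_1$. Hence $L_2 = P_2 \sqcup D_2$, where $D_2 := (P_1 + P_1) \cap L_2$, yielding
\[
 \lvert \leftsoper(S) \rvert = 1 + \lvert P_1 \rvert + \lvert P_2 \rvert + \lvert D_2 \rvert, \qquad \lvert \primitivesoper(S) \rvert = \lvert P_1 \rvert + \lvert P_2 \rvert + \lvert P_3 \rvert.
\]
Writing $c = 2m + \lvert L_2 \rvert + g$, where $g$ is the number of gaps of $S$ inside $[2m, c)$, Wilf's inequality $\lvert \primitivesoper(S) \rvert \cdot \lvert \leftsoper(S) \rvert \ge c$ becomes a combinatorial assertion relating the quantities $\lvert P_1 \rvert, \lvert P_2 \rvert, \lvert P_3 \rvert, \lvert D_2 \rvert$ and $g$.

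The crucial ingredient is a strong lower bound on $\lvert D_2 \rvert$ in terms of $\lvert P_1 \rvert$. This is where Macaulay's theorem on the Hilbert function of a standard graded algebra enters: viewing the elements of $P_1$ (shifted by $-m$) as a basis for the degree-one piece of a suitable standard graded quotient of a polynomial ring over a field, Macaulay's growth bound controls from below the dimension of the degree-two piece. Translating the algebra back to the semigroup, this produces an inequality of the form $\lvert D_2 \rvert \ge \mu(\lvert P_1 \rvert)$, where $\mu$ is the explicit Macaulay function defined by the binomial expansion of $\lvert P_1 \rvert$.

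The main obstacle lies precisely in this translation: recasting the additive structure of the integer sumset $P_1 + P_1$ inside the window $[2m, 3m)$ as the degree-two piece of a standard graded algebra to which Macaulay applies. Once the bound on $\lvert D_2 \rvert$ is in hand, the proof finishes by combining it with the trivial estimates $\lvert P_3 \rvert \ge 0$, $\lvert P_2 \rvert \ge 0$, and $g \le m$, via a short case analysis on the size of $\lvert P_1 \rvert$, to verify $\lvert \primitivesoper(S) \rvert \cdot \lvert \leftsoper(S) \rvert \ge c$ in each case.
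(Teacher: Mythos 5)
There is a genuine gap. The entire mathematical content of this theorem is the step you yourself flag as "the main obstacle": producing the lower bound on the number of decomposable elements in the window $[2\multiplicityoper,\conductoroper)$ via Macaulay's theorem. You correctly set up the stratification ($L_1=P_1$ since no nonzero sum of elements of $S$ is below $2\multiplicityoper$, and every decomposable left element of the third layer lies in $P_1+P_1$), but you neither construct the standard graded algebra whose degree-two piece realizes the sumset, nor state the resulting inequality precisely, nor carry out the concluding verification of $\lvert\primitivesoper(S)\rvert\cdot\lvert\leftsoper(S)\rvert\ge\conductoroper(S)$ — which in Eliahou's paper is not a "short case analysis" but a substantial argument occupying several sections. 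A proof whose central lemma is acknowledged as unproven is a proof outline, not a proof. There is also a bookkeeping error: you define $P_2=\primitivesoper(S)\cap[2\multiplicityoper,3\multiplicityoper)$ and write $\lvert\leftsoper(S)\rvert=1+\lvert P_1\rvert+\lvert P_2\rvert+\lvert D_2\rvert$, but when $\conductoroper(S)<3\multiplicityoper(S)$ the set $P_2$ contains big primitives in $[\conductoroper,3\multiplicityoper)$ that are not left elements, so this formula overstates $\lvert\leftsoper(S)\rvert$ — exactly the wrong direction for the inequality you are trying to prove.

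For comparison: the paper does not reprove this result at all. It is imported as \cite[Cor.~6.5]{Eliahou2018JEMS-Wilfs}, and the paper's only remark on its proof is that it follows immediately from the two cited facts that generic semigroups are Eliahou (Proposition~\ref{th:eliahou-large-mult}) and that Eliahou semigroups are Wilf (Proposition~\ref{prop:eliahou-implies-wilf}). Your outline does track the broad strategy behind Eliahou's original argument (reduce to depth $3$ via Kaplan, stratify by multiples of the multiplicity, control the decomposables by Macaulay's growth bound), so it is a reasonable roadmap — but to stand as a proof it would need the graded-algebra construction, the explicit Macaulay inequality, and the final numerical verification all written out.
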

One can also suggestively refer the above result by saying that semigroups of small depth are Wilf, where ``small depth'' should be understood as ``depth \(\le 3\)''.
 
Another deep result of Eliahou is stated next. It involves the embedding dimension, another of the invariants present in most of the literature on numerical semigroups. In his proof, Eliahou uses the elements of a numerical  semigroup to associate a graph to it. Then, graph theory intervenes in a non trivial way, namely by using the notion of vertex-maximal matching. The result obtained improves one of Sammartano~\cite{Sammartano2012SF-Numerical} which states that a numerical semigroup such that \(\left| \primitivesoper \right| \ge m / 2\) is Wilf. 
\begin{theorem}[\cite{Eliahou2019ae-graph}]\label{th:large-ed-are-wilf}
 	Let \(S\) be a numerical semigroup such that \(\embeddingdimensionoper(S) \ge \multiplicityoper(S) / 3\). Then \(S\) is Wilf.
\end{theorem}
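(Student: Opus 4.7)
The plan is to follow the graph-theoretic strategy hinted at in the paper, showing directly that $\embeddingdimensionoper(S)\cdot|\leftsoper(S)|\ge \conductoroper(S)$. Write $m=\multiplicityoper(S)$, $e=\embeddingdimensionoper(S)$, $\chi=\conductoroper(S)$, $\ell=|\leftsoper(S)|$, and $P=\primitivesoper(S)$. First I would dispose of the easy depth regime: by Theorem~\ref{th:large-mult-are-wilf} every semigroup with $\chi\le 3m$ is Wilf, so I may assume $\chi>3m$, i.e.\ $\depthoper(S)\ge 4$. In this regime the Frobenius number is far from the multiplicity, so there is ample room to produce new left elements from sums of primitives.

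The next step is to build the auxiliary graph. Let $G_S$ be the simple graph whose vertex set is the set of left primitives $\leftprimitivesoper(S)=P\cap[0,\chi)$ and whose edges are the pairs $\{p,p'\}$ (with $p\ne p'$, or loops $\{p,p\}$ if one prefers to allow $2p<\chi$) such that $p+p'<\chi$. Each edge contributes a distinct sum in $\leftsoper(S)$ \emph{provided that the sums coming from distinct edges are actually different}; ensuring this is the first technical point, and would be handled by a case analysis separating ``small'' primitives (those $\le\chi/2$) from the rest, noting that a sum $p+p'\in\leftsoper(S)$ not equal to any primitive is automatically a new left element.

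Now I would extract a vertex-maximal matching $M$ in $G_S$ — i.e.\ a matching such that every vertex is either saturated or belongs to an independent set of uncovered vertices — and bound $\ell$ from below by
\[
\ell \;\ge\; |\leftprimitivesoper(S)| \;+\; |M| \;+\; (\text{decomposable left elements not accounted for}).
\]
The uncovered vertices $U=\leftprimitivesoper(S)\setminus V(M)$ form an independent set by maximality, so every pair $p,p'\in U$ satisfies $p+p'\ge\chi$. This forces all but at most one element of $U$ to lie in $(\chi/2,\chi)$, which in turn gives a strong lower bound on $|U|\cdot(\chi/2)$ and hence on the total mass carried by left primitives. Combining this mass bound with $|M|$ counted twice (once for each endpoint as a left primitive, once for the sum as a new left element) would ultimately give an inequality of the shape $\ell\ge \chi/e$ exactly when $e\ge m/3$.

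The main obstacle will be step three — the careful bookkeeping. One must (i) prove that the sums $p+p'$ over edges of $M$ are pairwise distinct and distinct from the primitives themselves, possibly after deleting a bounded error set; (ii) choose the matching to maximise not just the number of edges but the \emph{value} of the associated sums, so that the independence condition on $U$ can be converted into a genuine lower bound on $\ell$; and (iii) identify the precise arithmetic inequality in which the threshold $e=m/3$ appears — presumably via a pigeonhole argument comparing $|U|+2|M|$ to $e$ and $\chi$ to the largest primitive, with the factor $3$ entering because sums of three primitives span all of $[0,\chi)$ when $\depthoper(S)\ge 4$. Once this inequality is extracted, Wilf's inequality follows.
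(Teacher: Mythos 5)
First, a point of context: the paper does not prove this theorem at all --- it is quoted as a deep external result of Eliahou (\cite{Eliahou2019ae-graph}), whose proof is a substantial paper in its own right. So the only question is whether your sketch stands on its own as a proof. It does not: it is an outline in the spirit of Eliahou's graph-theoretic strategy, but every step that carries actual content is deferred with phrases such as ``would be handled by a case analysis'', ``would ultimately give'', and ``presumably via a pigeonhole argument''. The reduction to \(\conductoroper(S)>3\multiplicityoper(S)\) via Theorem~\ref{th:large-mult-are-wilf} is legitimate, and the observation that a sum \(p+p'<\conductoroper(S)\) of two primitives is a decomposable left element (hence not a left primitive) is correct; everything after that is asserted rather than proved.

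Three gaps in particular are fatal as written. (i) To get \(\lvert\leftsoper(S)\rvert\ge\lvert\leftprimitivesoper(S)\rvert+\lvert M\rvert\) you need the sums \(p+p'\) over the edges of the matching \(M\) to be pairwise distinct; a matching only guarantees the \emph{endpoints} are distinct, and \(p_1+p_2=p_3+p_4\) can perfectly well occur for two disjoint edges. No mechanism for resolving these collisions is given, and this is precisely where the real work lies. (ii) The step from ``the uncovered vertices \(U\) form an independent set, so all but one lie in \((\conductoroper(S)/2,\conductoroper(S))\)'' to a lower bound on \(\lvert\leftsoper(S)\rvert\) is not carried out: a ``total mass'' bound on the primitives does not by itself bound the number of left elements. (iii) The final inequality in which the threshold \(\embeddingdimensionoper(S)\ge\multiplicityoper(S)/3\) appears is never derived; the heuristic that ``sums of three primitives span \([0,\conductoroper(S))\) when the depth is at least \(4\)'' is not a statement you have established, and it is not where the factor \(3\) comes from in Eliahou's argument (which proceeds through a careful analysis of the Ap\'ery set of \(S\) relative to \(\multiplicityoper(S)\), the decomposable elements of the threshold interval, and an inequality relating the Wilf number to a vertex-maximal matching of a graph built on those data). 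In short, the proposal identifies a plausible skeleton but leaves all of the load-bearing bones missing; it cannot be accepted as a proof of Theorem~\ref{th:large-ed-are-wilf}.
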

A suggestive way to refer to this result is by saying that semigroups of \emph{large} embedding dimension (with respect to the multiplicity) are Wilf. Recall that the embedding dimension of a numerical semigroup does not exceed its multiplicity, which motivates the use of the term ``large''.
 
The above results, especially Theorem~\ref{th:large-ed-are-wilf}, play a crucial role in our project, as will become apparent from the remaining of the paper. 
 \medskip
 
Examples~\ref{example:large_depth} and~\ref{example:m80} below are used to observe that the families of numerical semigroups guaranteed to be Wilf by Theorem~\ref{th:large-mult-are-wilf} and by Theorem~\ref{th:large-ed-are-wilf} are considerably different.
The first of these examples gives explicitly an infinite family of large embedding dimension numerical semigroups  whose depth is large. In particular, all members of this family
are Wilf by Theorem~\ref{th:large-ed-are-wilf}, but the same conclusion can not be achieved by using Theorem~\ref{th:large-mult-are-wilf}.
\begin{example}\label{example:large_depth}
  	Let \(m,n\) be positive integers, with \(m\ge 2\). Then the conductor of the semigroup \(\langle m, nm+1,\ldots, nm + m-1 \rangle\) is \(nm\). Thus, the infinite set 
	\(\{\langle m, nm+1,\ldots, nm + m-1 \rangle\colon n \ge 4\}\) consists of semigroups whose depth is greater than \(3\).
\end{example}
 
An infinite family of generic and small embedding dimension is given in the following example, borrowed from~\cite{Delgado2019ae-Conjecture} where details may be found. 
 
\begin{example}\label{example:m80}
 	Let \(m\) be a positive integer and let \(Y_m=\{m, m+1,m+2,m+3\}\cup \{7k+m\mid 0 < k\le \lceil \frac{m}{7}\rceil\}\). Consider the semigroup \(S_m=\langle Y_m\rangle\). Figure~\ref{fig:M-not-ED-m80} give a pictorial representation.	
 	
 	\begin{figure}[h]
 		\begin{center}
 			\includegraphics[width=\textwidth]{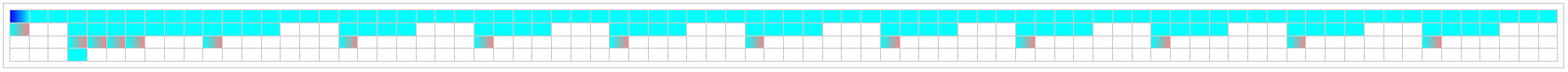}
 		\end{center}
 		\caption{Shape of the numerical semigroup \(S_{80}\). \label{fig:M-not-ED-m80}}
 	\end{figure} 
 	
 	It is easy to check that \(\primitivesoper(S_m) = \{m, m+1,m+2,m+3\}\cup \{7k+m\mid 0 < k\le \lceil \frac{m}{7}\rceil-1\}\). Thus the embedding dimension of \(S_m\) is \(4+ \lceil\frac{m}{7}\rceil-1= 3+\lceil \frac{m}{7}\rceil\). Note that \(3+\frac{m}{7} \le 3+\lceil \frac{m}{7}\rceil\), and that \(3+\frac{m}{7}  <\frac{m}{3}\) if and only if \(7m-3m > 3\times 7\times 3\), that is \(4m>63\). Thus one concludes that, for \(m>15\), \(S_m\) is not of large embedding dimension.	
\end{example}

 The above example shows, in particular, that there are infinitely many semigroups that Theorem~\ref{th:large-mult-are-wilf} guarantees to be Wilf while Theorem~\ref{th:large-ed-are-wilf} does not provide such guarantee.

\subsection{A problem by Eliahou}\label{subsec:eliahou}
  	Eliahou semigroups are defined in a way that is similar to the one we used to define Wilf semigroups. 
    Recall that the integer
  	\(\depthoper=\lceil \conductoroper/\multiplicityoper\rceil\) is called the depth 
  	of the numerical semigroup. Denote by \(\rhooper=\depthoper\cdot \multiplicityoper-\conductoroper\) the (negative) remainder of the division of the conductor by the multiplicity.  
  	The set of decomposable elements in the interval of length \(\multiplicityoper\) starting in \(\conductoroper\)  (which Eliahou suggested to name the \emph{threshold interval}) is \(\Ddepthoper=\{n\in \mathbb{N}\mid \conductoroper\le n < \conductoroper + \multiplicityoper\}\setminus \primitivesoper\).
 
 	To a numerical semigroup~\(S\) one associates the following number denoted \(\eliahouoper(S)\) and called the  \emph{Eliahou number of~\(S\)}:
 	\begin{equation}\label{eq:eliahou-number}
 		\eliahouoper(S) = \lvert \primitivesoper\cap \leftsoper\rvert\lvert \leftsoper \rvert - {\depthoper} \lvert \Ddepthoper\rvert +\rho.
 	\end{equation}
 
	A numerical semigroup is said to be an  \emph{Eliahou semigroup} if its Eliahou number is non negative. 
 
	\begin{proposition}[{\cite[Prop.~7.6]{Eliahou2018JEMS-Wilfs}}]\label{th:eliahou-large-mult}
	 	Generic semigroups are Eliahou.
	\end{proposition}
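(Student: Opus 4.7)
The plan is to split the argument by the depth $\depthoper = \lceil \conductoroper/\multiplicityoper \rceil$, which the genericity hypothesis $\conductoroper \le 3\multiplicityoper$ forces into the set $\{1,2,3\}$. Writing
\[
\eliahouoper(S) = \lvert \primitivesoper \cap \leftsoper \rvert \cdot \lvert \leftsoper \rvert - \depthoper \lvert \Ddepthoper \rvert + \rho,
\]
the target reduces to proving $\lvert \primitivesoper \cap \leftsoper \rvert \cdot \lvert \leftsoper \rvert + \rho \ge \depthoper \lvert \Ddepthoper \rvert$ in each of the three cases separately.

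For $\depthoper = 1$, one has $\conductoroper = \multiplicityoper$, so $S = \superficial{\multiplicityoper}$; the threshold interval $[\conductoroper, \conductoroper + \multiplicityoper)$ consists entirely of the generators $\multiplicityoper, \ldots, 2\multiplicityoper - 1$, forcing $\Ddepthoper = \emptyset$ and $\rho = 0$, whence $\eliahouoper(S) \ge 0$ trivially. For $\depthoper = 2$, I would use a direct counting argument of Kaplan's type: every decomposable $d \in [\conductoroper, \conductoroper + \multiplicityoper) \subseteq (\multiplicityoper, 3\multiplicityoper)$ can be written as $d = p + s$ with $p$ a primitive of $S$ and $s \in S \setminus \{0\}$, and the inequalities $d < 2\conductoroper$ together with $s \ge \multiplicityoper$ force both $p$ and $s$ into $\leftsoper$. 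A careful count of the available pairs $(p,s) \in (\primitivesoper \cap \leftsoper) \times (\leftsoper \setminus \{0\})$, together with the slack $\rho = 2\multiplicityoper - \conductoroper \ge 0$, should yield $\lvert \primitivesoper \cap \leftsoper \rvert \cdot \lvert \leftsoper \rvert + \rho \ge 2\lvert \Ddepthoper \rvert$.

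The case $\depthoper = 3$ is the hard one and is where Eliahou's main theoretical input enters. Now decomposables in $[\conductoroper, \conductoroper + \multiplicityoper) \subseteq (2\multiplicityoper, 4\multiplicityoper)$ can arise as sums of three left elements, so a direct pairing count is too coarse. The approach I would take is to stratify $S$ by the $\multiplicityoper$-adic layers $I_i = S \cap [i\multiplicityoper, (i+1)\multiplicityoper)$, pass to the associated graded algebra of $S$, and interpret $\lvert \Ddepthoper \rvert$ as the shortfall of the degree-three piece from a full interval of length $\multiplicityoper$. Macaulay's theorem on the growth of Hilbert functions of standard graded algebras then controls the size of this piece in terms of the sizes of the lower-degree pieces, giving an upper bound on $\lvert \Ddepthoper \rvert$ in terms of $\lvert I_1 \rvert$ and $\lvert I_2 \rvert$. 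Combining this Macaulay bound with the identity $\lvert \leftsoper \rvert = 1 + \lvert I_1 \rvert + \lvert I_2 \rvert$ (with a minor adjustment when $\rho > 0$) and the slack $\rho \ge 0$ should, after some arithmetic, close the inequality.

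The main obstacle I anticipate is precisely the $\depthoper = 3$ case: both the algebraic setup (identifying the appropriate graded algebra associated to $S$ and interpreting $\lvert \Ddepthoper \rvert$ inside it) and the extraction of a numerically tight enough inequality from Macaulay's bound require nontrivial work. The cases $\depthoper \le 2$ are essentially elementary combinatorics, but the $\depthoper = 3$ step is the genuine content of the proposition and the reason the result appears late in Eliahou's paper.
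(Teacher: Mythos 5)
First, note that the paper does not prove this statement at all: it is imported verbatim from Eliahou's JEMS paper as \cite[Prop.~7.6]{Eliahou2018JEMS-Wilfs}, so there is no in-paper proof to compare against. Your outline does correctly identify the architecture of Eliahou's actual argument (case split on the depth $\depthoper\in\{1,2,3\}$, elementary counting for $\depthoper\le 2$, Macaulay's theorem on Hilbert function growth for $\depthoper=3$), and your $\depthoper=1$ case is complete and correct.

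There are, however, two genuine gaps. For $\depthoper=2$, the count you describe --- injecting $\Ddepthoper$ into ordered pairs $(p,s)\in(\primitivesoper\cap\leftsoper)\times(\leftsoper\setminus\{0\})$ --- only yields $\lvert\Ddepthoper\rvert\le\lvert\primitivesoper\cap\leftsoper\rvert\,(\lvert\leftsoper\rvert-1)$, and then $2\lvert\Ddepthoper\rvert\le 2\lvert\primitivesoper\cap\leftsoper\rvert(\lvert\leftsoper\rvert-1)$ exceeds $\lvert\primitivesoper\cap\leftsoper\rvert\,\lvert\leftsoper\rvert$ as soon as $\lvert\leftsoper\rvert\ge 3$, so the inequality does not close as stated. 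The repair is to observe that when $\conductoroper\le 2\multiplicityoper$ every nonzero left element lies in $[\multiplicityoper,2\multiplicityoper)$ and is therefore automatically primitive, so $\primitivesoper\cap\leftsoper=\leftsoper\setminus\{0\}$, and to count \emph{unordered} pairs: $\Ddepthoper\subseteq(\leftsoper\setminus\{0\})+(\leftsoper\setminus\{0\})$ gives $\lvert\Ddepthoper\rvert\le\binom{\lvert\leftsoper\rvert}{2}$, whence $2\lvert\Ddepthoper\rvert\le\lvert\leftsoper\rvert(\lvert\leftsoper\rvert-1)=\lvert\primitivesoper\cap\leftsoper\rvert\,\lvert\leftsoper\rvert$ and $\eliahouoper(S)\ge\rho\ge 0$. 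The more serious gap is $\depthoper=3$, which, as you yourself say, is the entire substance of the proposition: naming the strategy (stratify by $\multiplicityoper$-adic layers, pass to a graded object, invoke Macaulay) is not the same as executing it. Identifying the correct standard graded algebra, showing its Hilbert function controls the layer sizes $\lvert I_1\rvert,\lvert I_2\rvert,\lvert I_3\rvert$, extracting a numerically usable bound on $\lvert\Ddepthoper\rvert$ from Macaulay's growth theorem, and the final case analysis closing $\eliahouoper(S)\ge 0$ occupy the technical core of Eliahou's paper, and none of that is carried out here. As submitted, this is a correct roadmap to the known proof rather than a proof.
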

 
	An important feature of Eliahou semigroups is given by the following proposition.
 
\begin{proposition}[{\cite[Prop.~3.11]{Eliahou2018JEMS-Wilfs},\cite[Cor.~2.3]{EliahouFromentin2019SF-misses}}]\label{prop:eliahou-implies-wilf}
 	Eliahou semigroups are Wilf.
\end{proposition}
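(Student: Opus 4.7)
The plan is to derive a clean identity expressing $W(S) - E(S)$ as a product of two quantities that are visibly nonnegative. To do this I would first recast $|\primitivesoper|$ in terms of the quantities appearing in the Eliahou number.

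First I would split the primitives as $\primitivesoper = (\primitivesoper \cap \leftsoper) \sqcup \bigprimitivesoper$, where the big primitives are those $\geq \conductoroper$. The crucial observation is that every big primitive lies in the threshold interval $\left[\conductoroper,\conductoroper+\multiplicityoper\right)$: any $n \geq \conductoroper+\multiplicityoper$ can be written as $\multiplicityoper+(n-\multiplicityoper)$ with $n-\multiplicityoper\in \left\{\conductoroper,\rightarrow\right\}\subseteq S$, hence is decomposable. Since the threshold interval has $\multiplicityoper$ elements and $\Ddepthoper$ consists precisely of its decomposable elements, this yields $|\bigprimitivesoper| = \multiplicityoper - |\Ddepthoper|$, and therefore
\[
|\primitivesoper| = |\primitivesoper \cap \leftsoper| + \multiplicityoper - |\Ddepthoper|.
\]

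Next I would plug this into the definition of $\wilfoper(S)$ and subtract $\eliahouoper(S)$. Using the identity $\rhooper = \depthoper\cdot\multiplicityoper - \conductoroper$, the terms involving $\conductoroper$ and $\rhooper$ combine to $-\depthoper\,\multiplicityoper$, and the rest factors cleanly:
\[
\wilfoper(S) - \eliahouoper(S) = (\multiplicityoper - |\Ddepthoper|)\,|\leftsoper| - \conductoroper + \depthoper\,|\Ddepthoper| - \rhooper
= (\multiplicityoper - |\Ddepthoper|)(|\leftsoper| - \depthoper).
\]

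Finally I would check both factors are nonnegative. The factor $\multiplicityoper - |\Ddepthoper|$ is nonnegative because $\Ddepthoper$ is a subset of the threshold interval, which has cardinality $\multiplicityoper$. For $|\leftsoper| - \depthoper$, I would use that $\depthoper = \lceil \conductoroper/\multiplicityoper\rceil$ forces $(\depthoper-1)\multiplicityoper < \conductoroper$, so the $\depthoper$ multiples $0,\multiplicityoper,\dots,(\depthoper-1)\multiplicityoper$ all lie in $\leftsoper$, giving $|\leftsoper| \geq \depthoper$. Combining, $\wilfoper(S) \geq \eliahouoper(S)$, whence $\eliahouoper(S)\geq 0$ implies $\wilfoper(S)\geq 0$.

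There is no serious obstacle here: the entire argument is a one-line algebraic identity once the correct decomposition of $|\primitivesoper|$ is in hand. The only conceptual point worth highlighting is the localization of big primitives inside the threshold interval, which is exactly what forces the Eliahou number (defined using $|\Ddepthoper|$ rather than $|\bigprimitivesoper|$) to dominate the Wilf number up to the factor $(\multiplicityoper - |\Ddepthoper|)(|\leftsoper|-\depthoper)$.
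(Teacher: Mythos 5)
Your proof is correct, and it is essentially the standard argument: the paper itself offers no proof of this proposition, only citations, and the identity you derive,
\[
\wilfoper(S)-\eliahouoper(S)=\bigl(\multiplicityoper-\lvert\Ddepthoper\rvert\bigr)\bigl(\lvert\leftsoper\rvert-\depthoper\bigr),
\]
together with the two nonnegativity observations (big primitives live in the threshold interval, and the multiples $0,\multiplicityoper,\dots,(\depthoper-1)\multiplicityoper$ witness $\lvert\leftsoper\rvert\ge\depthoper$), is precisely the content of the cited results of Eliahou and Eliahou--Fromentin. So this is a faithful reconstruction of the intended proof rather than a new route.
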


Note that Theorem~\ref{th:large-mult-are-wilf} is an immediate consequence of the two propositions above. This reflects in fact the strategy followed by Eliahou to obtain that great result.
\medskip

The converse of Proposition~\ref{prop:eliahou-implies-wilf} does not hold. In fact, up to genus \(60\) there are \(5\) non-Eliahou semigroups (obtained using exhaustive search by Fromentin, as referred in~\cite{Eliahou2018JEMS-Wilfs}). 
\begin{example}[Fromentin]\label{ex:Eliahou-Fromentin-examples} 
	The following are the only numerical semigroups up to genus \(60\) with negative Eliahou number (which happens to be \(-1\) in all cases):
	\(\langle14,22,23\rangle_{56}\), \(\langle16,25,26\rangle_{64}\), \(\langle17,26,28\rangle_{68}\), \(\langle17,27,28\rangle_{68}\) and \(\langle18,28,29\rangle_{72}\).
\end{example}
 
It is currently known that there are infinitely many non-Eliahou semigroups (Delgado~\cite{Delgado2018MZ-question}, Eliahou and Fromentin~\cite{EliahouFromentin2019SF-misses}). One even knows that for any given integer there are infinitely many semigroups whose Eliahou number is that integer (see~\cite{Delgado2018MZ-question}). All of those known non-Eliahou semigroups have been proven to be Wilf.
 
A most probably very difficult problem is open.
\begin{problem}[{\cite[Sec. 3.5]{Eliahou2018JEMS-Wilfs}}]\label{prob:characteriza-Eliahou}
	Characterize Eliahou semigroups.
\end{problem}
	From the above discussion, it is reasonable to expect that the characterization Problem~\ref{prob:characteriza-Eliahou} asks for would give a huge contribution to the solution of Wilf's conjecture.

\subsection{Semigroups with Wilf number 0}\label{subsec:0-wilf}
	Observe that the Wilf number of a numerical semigroup is \(0\) if and only if Wilf's equality holds for that semigroup. A numerical semigroup with Wilf number \(0\) is said to be \emph{\(0\)-Wilf}.    

	Let \(k\) and \(m\) be positive integers. A semigroup of the form \(\langle m, km+1,\ldots (k+1)m-1\rangle\) is said to be \emph{quasi-superficial}.
	It is easy to observe that quasi-superficial numerical semigroups are \(0\)-Wilf (see the final comments made by Fröberg, Gottlieb and Haeggkvist in~\cite{FroebergGottliebHaeggkvist1987SF-numerical}). As already observed (Remark~\ref{rem:Silvester_implies_Wilf}), numerical semigroups of embedding dimension~\(2\) are \(0\)-Wilf.

	Using the terminology just introduced, a question posed by Moscariello and Sammartano~\cite[Question~8]{MoscarielloSammartano2015MZ-conjecture} can be stated as follows:

	\begin{problem}\label{prob:wilf-equality}
		Let \(S\) be a \(0\)-Wilf numerical semigroup. Is it true that either \(\embeddingdimensionoper(S)=2\) or \(S\) is quasi-superficial?
	\end{problem}
	We will refer to this question as the \emph{\(0\)-Wilf problem.}
	Moscariello and Sammartano observed that the \(0\)-Wilf problem has no negative answer among the numerical semigroups up to genus \(35\). (They used to this effect the package \textsf{numericalsgps}, which may be seen as general purpose software. Bigger numbers will be provided in the experimental parts of this project.)

	Kaplan~\cite[Prop.~26]{Kaplan2012JPAA-Counting} has shown that Problem~\ref{prob:wilf-equality} has a positive answer in the case of numerical semigroups whose multiplicity is at least half of the conductor. The same holds for numerical semigroups of depth \(3\) (see a remark by Sammartano in~\cite[Rem.~6.6]{Eliahou2018JEMS-Wilfs}). In other words, there is is no negative answer for the \(0\)-Wilf problem among generic semigroups.

	\begin{proposition}[{\cite[Rem.~6.6]{Eliahou2018JEMS-Wilfs}}]\label{prop:generic_are_0_Wilf}
		Let \(S\) be a generic \(0\)-Wilf numerical semigroup. Then either \(\embeddingdimensionoper(S) = 2\) or \(S\) is quasi-superficial. 
	\end{proposition}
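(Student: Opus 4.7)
The plan is to reduce to the depth-three case and then exploit a clean identity between $\wilfoper(S)$ and $\eliahouoper(S)$ to split the argument. Kaplan's Proposition~26 in~\cite{Kaplan2012JPAA-Counting} already settles the conclusion whenever $\multiplicityoper(S) \ge \conductoroper(S)/2$, i.e.\ $\depthoper(S) \le 2$, so I may assume $\depthoper(S) = 3$, equivalently $2\multiplicityoper < \conductoroper \le 3\multiplicityoper$. In this regime the threshold interval $[\conductoroper,\conductoroper+\multiplicityoper)$ lies entirely in $S$ and each of its $\multiplicityoper$ elements is either a big primitive or a decomposable, so $e_3 + |\Ddepthoper| = \multiplicityoper$ where $e_3 := |\bigprimitivesoper(S)|$. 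Combined with $\rhooper = 3\multiplicityoper - \conductoroper$, a short direct calculation from the definitions of $\wilfoper$ and $\eliahouoper$ yields the identity
\[
\wilfoper(S) - \eliahouoper(S) \;=\; e_3 \cdot \bigl(|\leftsoper(S)| - 3\bigr).
\]

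By Proposition~\ref{th:eliahou-large-mult} we have $\eliahouoper(S) \ge 0$, and depth three guarantees $\{0,\multiplicityoper,2\multiplicityoper\} \subseteq \leftsoper(S)$, so $|\leftsoper(S)| \ge 3$. The hypothesis $\wilfoper(S) = 0$ therefore forces $\eliahouoper(S) = 0$ together with either $|\leftsoper(S)| = 3$ or $e_3 = 0$. In the first subcase, $\leftsoper(S) = \{0,\multiplicityoper,2\multiplicityoper\}$, so $S = \{0,\multiplicityoper,2\multiplicityoper\} \cup [\conductoroper,\infty)$; an element $x$ of the threshold interval is primitive iff $x - \multiplicityoper \notin S$, and since $S \cap [\conductoroper-\multiplicityoper,\conductoroper-1] = \{2\multiplicityoper\}$ the only failure is $x = 3\multiplicityoper$, giving $|\primitivesoper(S)| = \multiplicityoper$. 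The equation $|\primitivesoper| \cdot |\leftsoper| = \conductoroper$ then forces $\conductoroper = 3\multiplicityoper$, and $S = \langle \multiplicityoper,\,3\multiplicityoper+1,\dots,4\multiplicityoper-1\rangle$ is quasi-superficial (with $k = 3$).

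The remaining subcase $e_3 = 0$ is the heart of the matter: every primitive lies in $\leftsoper(S)$, and every element of $[\conductoroper,\conductoroper+\multiplicityoper)$ admits a decomposition $p+s$ with $p \in \primitivesoper$ and $s \in \leftsoper\setminus\{0\}$. My plan is to encode each nonzero residue class of $S$ modulo $\multiplicityoper$ by its Apéry-set depth level $k_i \in \{1,2,3\}$ (so that the Apéry element of residue $i$ equals $i + k_i\multiplicityoper$), translate the conditions $e_3 = 0$ and $|\primitivesoper|\cdot|\leftsoper| = \conductoroper$ into rigid constraints on the tuple $(k_i)$ together with arithmetic compatibility of ``sums of two left primitives'' with every residue at depth level $3$, and deduce that only one ``extra'' primitive beyond $\multiplicityoper$ itself can survive, yielding $\embeddingdimensionoper(S) = 2$. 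The main obstacle is precisely this final step: extracting the required rigidity from the equality case of Eliahou's bound $\eliahouoper(S) \ge 0$, whose proof in~\cite{Eliahou2018JEMS-Wilfs} passes through Macaulay's theorem on the growth of Hilbert functions, and ruling out the presence of phantom left primitives in $(2\multiplicityoper,\conductoroper)$ that would spoil the $0$-Wilf equation while evading every sufficient condition inherited from the Eliahou side.
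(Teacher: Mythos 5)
You should first note that the paper does not prove this proposition at all: it is imported verbatim from the literature (a remark of Sammartano reported in \cite[Rem.~6.6]{Eliahou2018JEMS-Wilfs}, together with Kaplan's \cite[Prop.~26]{Kaplan2012JPAA-Counting} for depth at most two), so there is no internal argument to compare yours against. Judged on its own terms, the first two thirds of your proposal are sound. The reduction to $\depthoper(S)=3$ via Kaplan is legitimate; the identity $\wilfoper(S)-\eliahouoper(S)=e_3\bigl(\lvert\leftsoper(S)\rvert-3\bigr)$ with $e_3=\lvert\bigprimitivesoper(S)\rvert$ checks out (it is exactly the computation behind \cite[Prop.~3.11]{Eliahou2018JEMS-Wilfs}, using that every primitive is at most $\Frobeniusoper+\multiplicityoper$, so the big primitives and $\Ddepthoper$ partition the threshold interval); $\lvert\leftsoper\rvert\ge 3$ holds since $0,\multiplicityoper,2\multiplicityoper<\conductoroper$; and the subcase $\lvert\leftsoper\rvert=3$ is worked out correctly and does yield the quasi-superficial semigroup $\langle \multiplicityoper,3\multiplicityoper+1,\dots,4\multiplicityoper-1\rangle$.

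The genuine gap is the subcase $e_3=0$, which you yourself call ``the heart of the matter'' and then do not prove. There you know only that $S$ is a leaf, $\depthoper(S)=3$, $\eliahouoper(S)=0$ and $\wilfoper(S)=\lvert\primitivesoper\rvert\,\lvert\leftsoper\rvert-\conductoroper=0$, and you must conclude $\embeddingdimensionoper(S)=2$ (quasi-superficial is impossible here since those semigroups have $m-1$ big primitives). What you offer is a plan --- encode residues by Ap\'ery depth levels and extract rigidity from the equality case of Eliahou's Macaulay-based inequality --- together with an explicit admission that you do not know how to rule out extra left primitives in $(2\multiplicityoper,\conductoroper)$. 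That is precisely the content of Sammartano's remark, and it is the only nontrivial part of the proposition once the bookkeeping identity is in place; without it the statement is not established. To close the gap you would need to actually analyze the equality case $\eliahouoper(S)=0$ (in Eliahou's framework this pins down the Hilbert-function data of the associated graded structure), or find an independent elementary argument for leaves of depth $3$ with $\lvert\primitivesoper\rvert\,\lvert\leftsoper\rvert=\conductoroper$.
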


\subsection{Consequences of an asymptotic result and a related problem}\label{subsec:asymptotic_a_result_and_a_problem}
	We recall Theorem~\ref{th:zhai-most-sgps-have-large-multiplicity}, an asymptotic result due to Zhai. The following notation was used: given a positive integer \(g\), \(t(g)\) denotes the number of generic semigroups of genus \(g\) and \(N(g)\) denotes the total number of semigroups of genus \(g\). Zhai's result says:
	\[\lim_{g\to\infty}\frac{t(g)}{N(g)}=1.\]
	In this sense, asymptotically, all numerical semigroups are generic.
	Theorems~\ref{th:large-mult-are-wilf}, Propositions~\ref{th:eliahou-large-mult} and~\ref{prop:generic_are_0_Wilf} state that all generic semigroups are Wilf, are Eliahou and do not provide any negative answer to Problem~\ref{prob:wilf-equality}. Thus, the mentioned results give, in particular, an asymptotic proof of Wilf's conjecture and an asymptotic positive answer to the \(0\)-Wilf problem. This should be used to better appreciate those results.

	\medskip

	A natural question is whether there is an asymptotic result similar to Theorem~\ref{th:zhai-most-sgps-have-large-multiplicity}, but considering the number of semigroups of large embedding dimension rather than the generic ones.
	More precisely:

	\begin{problem}\label{prob:generic_embedding_dimension}
		Denote by \(E(g)\) the number of numerical semigroups of genus \(g\) satisfying \(\embeddingdimensionoper \ge \multiplicityoper/3\). Is it true that \(\lim_{g\to\infty} \frac{E(g)}{N(g)}=1\)?
	\end{problem}

	A positive answer to Problem~\ref{prob:generic_embedding_dimension} would lead to a very interesting consequence: one would be able to conclude that Theorem~\ref{th:large-ed-are-wilf} provides another proof of the asymptotic validity of Wilf's conjecture. Let us stress out that the proofs obtained by Eliahou of Theorems~\ref{th:large-mult-are-wilf} and~\ref{th:large-ed-are-wilf} involve radically different arguments, thus one would have two independent proofs of the asymptotic validity of Wilf's conjecture. See also Examples~\ref{example:large_depth} and~\ref{example:m80} and the comments nearby, which point out that the results are rather different.

	Experiments suggest that the answer to Problem~\ref{prob:generic_embedding_dimension} is in fact positive. Table~\ref{table:by_genus_low_edim} collects some numbers: for each \(g\in\mathbb{N}\) non greater than \(45\) it indicates the number \(N(g)\) of numerical semigroups and the number \(led(g)\) of those among them with little embedding dimension (that is, satisfying \(\embeddingdimensionoper<\multiplicityoper/3\)). Summing up the columns \(led(g)\) and \(N(g)\), one gets that there are \(\num{202491}\) numerical semigroups of little embedding dimension among the \(\num{23022228615}\) numerical semigroups of genus up to \(45\). The quotient \(\frac{\num{202491}}{\num{23022228615}}\), approximately \(8.8\cdot 10^{-6}\), is rather small.
	(For longer tables see~\cite{Delgado2019-Probing}.)

	\begin{table}
  		\begin{tabular}{r|r|r||r|r|r||r|r|r||r|r|r}
		\hline
		\(g\) &	\(N(g)\) & \(led(g)\)& \(g\) & \(N(g)\)& \(led(g)\) &\(g\) & \(N(g)\)& \(led(g)\)&\(g\) & \(N(g)\)& \(led(g)\)\\
		\hline
		\num{0} & \num{1} & \num{1} & \num{12} & \num{592} & \num{0} & \num{24} & \num{282828} & \num{4} & \num{36} & \num{109032500} & \num{1298}\\
		\num{1} & \num{1} & \num{0} & \num{13} & \num{1001} & \num{0} & \num{25} & \num{467224} & \num{11} & \num{37} & \num{178158289} & \num{2096}\\
		\num{2} & \num{2} & \num{0} & \num{14} & \num{1693} & \num{0} & \num{26} & \num{770832} & \num{11} & \num{38} & \num{290939807} & \num{3267}\\
		\num{3} & \num{4} & \num{0} & \num{15} & \num{2857} & \num{0} & \num{27} & \num{1270267} & \num{17} & \num{39} & \num{474851445} & \num{5058}\\
		\num{4} & \num{7} & \num{0} & \num{16} & \num{4806} & \num{0} & \num{28} & \num{2091030} & \num{26} & \num{40} & \num{774614284} & \num{8073}\\
		\num{5} & \num{12} & \num{0} & \num{17} & \num{8045} & \num{0} & \num{29} & \num{3437839} & \num{43} & \num{41} & \num{1262992840} & \num{12208}\\
		\num{6} & \num{23} & \num{0} & \num{18} & \num{13467} & \num{0} & \num{30} & \num{5646773} & \num{117} & \num{42} & \num{2058356522} & \num{19308}\\
		\num{7} & \num{39} & \num{0} & \num{19} & \num{22464} & \num{0} & \num{31} & \num{9266788} & \num{108} & \num{43} & \num{3353191846} & \num{29901}\\
		\num{8} & \num{67} & \num{0} & \num{20} & \num{37396} & \num{0} & \num{32} & \num{15195070} & \num{222} & \num{44} & \num{5460401576} & \num{46835}\\
		\num{9} & \num{118} & \num{0} & \num{21} & \num{62194} & \num{3} & \num{33} & \num{24896206} & \num{323} & \num{45} & \num{8888486816} & \num{72076}\\
		\num{10} & \num{204} & \num{0} & \num{22} & \num{103246} & \num{1} & \num{34} & \num{40761087} & \num{576} & \num{} & \num{} & \num{}\\
		\num{11} & \num{343} & \num{0} & \num{23} & \num{170963} & \num{0} & \num{35} & \num{66687201} & \num{908} & \num{} & \num{} & \num{}\\
			\hline
	\end{tabular}
		\caption{Number of semigroups with little embedding dimension, by genus.  \label{table:by_genus_low_edim}}
	\end{table}

	The search for an asymptotic result concerning embedding dimension as the one referred in Problem~\ref{prob:generic_embedding_dimension} is clearly a theme for possible future work.

\subsection{Challenges}\label{subsec:challenges}
	Section~\ref{sec:motivating} gives rise to several challenges that are worth to consider from computational and experimental points of view. Before developing some theory, which will happen in later sections, we will list the ones we want to consider.
	\medskip
	
	In what concerns the counting of all numerical semigroups up to a given genus, a challenge was already presented in Question~\ref{challenge:genus}.
	
	Counting in other classes of numerical semigroups is also of high interest. For instance, Eliahou and Fromentin~\cite{EliahouFromentin2019} counted generic numerical semigroups up to genus~\(65\). (For the numbers up to genus \(60\), see~\cite[Fig.~4]{EliahouFromentin2020JCTSA-Gapsets}).
	
	\begin{question}\label{challenge:count_generic}	
		Count the (non-)generic semigroups up to a genus as big as possible.
	\end{question}
	Related with Problem~\ref{prob:generic_embedding_dimension}, one may ask for the number of numerical semigroups of some given genus with large embedding dimension, that is, \(d\ge \multiplicityoper/3\). Up to genus \(45\) Table~\ref{table:by_genus_low_edim} provides the answer (since, for each genus \(g\), the desired number is \(N(g)-led(g)\)). 
	\begin{question}\label{challenge:count_large_edim}	
		Count the numerical semigroups with little (or with large) embedding dimension up to a genus as large as possible.
	\end{question}

	Recall that with genus up to \(60\) all numerical semigroups are Wilf. Next we state a natural challenge as a question.
	\begin{question}\label{challenge:wilf}
		Let \(\Gamma\) be a positive integer non smaller than \(61\).
		Are there non Wilf semigroups of genus up to \(\Gamma\)?
	\end{question}

	From Example~\ref{ex:Eliahou-Fromentin-examples}, we know that up to genus \(60\) there are \(5\) non-Eliahou semigroups. A natural challenge is next stated as a question (whose answer naturally includes the \(5\) examples mentioned).
	\begin{question}\label{challenge:eliahou}
		Let \(\Gamma\) be a positive integer non smaller than \(61\).
		Which are the non-Eliahou semigroups of genus up to \(\Gamma\)?
	\end{question}
	
	Another natural challenge, this related to Problem~\ref{prob:wilf-equality}, is the following: 
	\begin{question}\label{challenge:Wilf_number_0}
		Let \(\Gamma\) be a positive integer non smaller than \(35\). Is there any non-superficial numerical semigroup with embedding dimension greater than \(2\), Wilf number \(0\) and genus up to \(\Gamma\)?
	\end{question}

\subsection{The need of reducing the search space}
	In Section~\ref{subsubsec:time_for_genus_100} we referred that, as there are many numerical semigroups with genus up to \(100\), counting them using the available means would take no less than \(\num{4000}\) years. 
	
	And what happens if in addition to counting one wants to test some property (for instance ``is Wilf'') for all those semigroups? It may happen that, using an appropriate encoding (see Section~\ref{sec:encoding_with_redundancy}) testing a property is not very time consuming.
	But even assuming that it costs no extra time, obtaining computational results up to genus \(100\) without reducing the search space is out of reach.	
	
	This paper aims to be a contribution to, through the integration of theoretical results, reduce the search space (depending on the property under consideration).
	
	When the property in cause is ``is Wilf'', that is, when considering Question~\ref{challenge:wilf}, \(\Gamma=100\) is reachable. Due to Theorem~\ref{th:large-ed-are-wilf}, only searching among numerical semigroups of low embedding dimension is involved.
	For records, or updated values, see~\cite{DelgadoFromentin2019}.
	
	All other questions stated in the present section involve searching among non-generic semigroups, which seems to require more costly computational tasks. This leads us to be not so optimistic as in the case of Wilf's conjecture for which we can use the cheap operation of looking to the number of primitives. Nevertheless, a lot can be done.

\section{Left elements}\label{sec:lefts}
This section is elementary and could appear well before in this paper. Our option has been to put it near the place where the results contained in it will start to be used.
It collects some observations on left elements or left primitives and the semigroup they generate.

As usual, \(S\) is an arbitrary numerical semigroup.
Recall that the set of left elements of \(S\) consists of the elements to the left of the conductor, that is, \(\leftsoper(S)=S\setminus\{\conductoroper(S),\rightarrow\}\).
Recall also that the number \(\left|\leftprimitivesoper(S)\right|\) of left primitives is called the left embedding dimension of \(S\) and is denoted \(\leftembeddingdimensionoper(S)\).

\begin{lemma}\label{lemma:lefts_elements_primitives}
	The left elements and the left primitives of a numerical semigroup \(S\) generate the same semigroup. In symbols: \(\langle\leftsoper(S)\rangle=\langle\leftprimitivesoper(S)\rangle\).
\end{lemma}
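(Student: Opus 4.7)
The plan is to prove the two inclusions separately. The inclusion $\langle\leftprimitivesoper(S)\rangle\subseteq\langle\leftsoper(S)\rangle$ is immediate from the fact that every left primitive is, by definition, a left element: $\leftprimitivesoper(S)\subseteq\leftsoper(S)$, and taking $\langle\,\cdot\,\rangle$ preserves this inclusion.

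For the reverse inclusion $\langle\leftsoper(S)\rangle\subseteq\langle\leftprimitivesoper(S)\rangle$, it suffices to show that every element of $\leftsoper(S)$ lies in $\langle\leftprimitivesoper(S)\rangle$. I would proceed by strong induction on the value of a left element $x\in\leftsoper(S)$. If $x$ is primitive in $S$, then since $x<\conductoroper(S)$ by hypothesis, $x\in\leftprimitivesoper(S)$ and we are done. If $x$ is decomposable, write $x=a+b$ with $a,b\in S\setminus\{0\}$. Then $a,b<x<\conductoroper(S)$, so both $a$ and $b$ belong to $\leftsoper(S)$ and are strictly smaller than $x$; by the induction hypothesis, both lie in $\langle\leftprimitivesoper(S)\rangle$, and hence so does their sum $x$. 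The base case is handled by the smallest nonzero element of $\leftsoper(S)$, which is the multiplicity $\multiplicityoper(S)$; being the least positive element of $S$, it is automatically primitive, and it is a left primitive since $\multiplicityoper(S)\le\conductoroper(S)$ holds for every semigroup other than $\mathbb{N}$ (a trivial case where $\leftsoper(S)=\emptyset$ and both generated semigroups are $\mathbb{N}$ itself, so the statement holds vacuously).

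No step looks like a real obstacle; the argument is a straightforward induction once one observes that decomposing a left element produces strictly smaller left elements. The only subtlety worth care is the edge case $S=\mathbb{N}$, which should be treated separately since $\leftsoper(\mathbb{N})=\emptyset$ and $\leftprimitivesoper(\mathbb{N})=\emptyset$, making both sides equal to $\langle\emptyset\rangle=\{0\}$ (viewed as a submonoid, which is contained in $\mathbb{N}$), so the equality still holds.
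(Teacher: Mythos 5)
Your proof is correct and takes essentially the same route as the paper: the entire content in both cases is the inclusion \(\leftsoper(S)\subseteq\langle\leftprimitivesoper(S)\rangle\), which the paper simply declares ``clear'' and then wraps in a chain of inclusions together with idempotence of \(\langle\cdot\rangle\), whereas you prove it explicitly by strong induction (decomposing a decomposable left element as a sum of two strictly smaller left elements). Your treatment of the edge cases (\(S=\mathbb{N}\), and the base case at the multiplicity) is also fine, so nothing is missing.
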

\begin{proof}
	Clearly \(\leftprimitivesoper(S)\subseteq\leftsoper(S)\subseteq\langle\leftprimitivesoper(S)\rangle\). Taking, for each of the sets in the chain, the semigroup it generates, we get  
	\begin{equation}\label{eq:lefts_elements_primitives}
		\langle\leftprimitivesoper(S)\rangle\subseteq\langle\leftsoper(S)\rangle\subseteq\langle\langle\leftprimitivesoper(S)\rangle\rangle.
	\end{equation} 
	As \(\langle\langle\leftprimitivesoper(S)\rangle\rangle=\langle\leftprimitivesoper(S)\rangle\), both ends of the chain (\ref{eq:lefts_elements_primitives}) are equal, and the result follows.
\end{proof}

It follows from the definitions that a numerical semigroup consists of its left elements and all the integers non smaller than the conductor. It is also clear that \(S = \leftsoper(S)\cup \mathcal{O}_{\conductoroper(S)}=\langle\leftsoper(S) \rangle_{\conductoroper(S)}\). Using Lemma~\ref{lemma:lefts_elements_primitives}, we can write the following remark:

\begin{remark}\label{rem:lefts_primitives_truncated} 
	\(S = \langle\leftprimitivesoper(S) \rangle_{\conductoroper(S)}\).
\end{remark}
Another immediate consequence of Lemma~\ref{lemma:lefts_elements_primitives} is stated next. Observe that \(\langle\leftsoper(S)\rangle\) is a numerical semigroup if and only if \(\gcd(\leftsoper(S))\ne 1\). 
\begin{remark}\label{rem:edim_equals_edim_lefts} 
	If \(\gcd(\leftsoper(S))=1\), then  \(\leftembeddingdimensionoper(S) = \embeddingdimensionoper(\langle\leftsoper(S)\rangle)\).
\end{remark}
Recall that a leaf is a numerical semigroup with no big primitives. Thus, a leaf is generated by the left primitives. By Lemma~\ref{lemma:lefts_elements_primitives}, we can write the following:
\begin{remark}\label{rem:leaf} 
	\(S\) is a leaf if and only if \(S = \langle\leftsoper(S)\rangle\).
\end{remark}
If \(S\) is not a leaf, then the child of \(S\) obtained by removing the smallest big primitive clearly has the same left primitives as \(S\). The process can be repeated for a chain of descendants, as the following remark states.
\begin{remark}\label{rem:removing_smallest_big_primitive}
	Suppose that \(S\) has some big primitive and let 
	\(S=S_0\succ S_1\succ \cdots \succ S_r\) be a chain of descendants of \(S\)
	such that \(S_i=S_{i-1}\setminus\{\min(\bigprimitivesoper(S_{i-1}))\}\) (\(1\le i\le r\)).
	Then \(\leftprimitivesoper(S_i)=\leftprimitivesoper(S)\), for all \(1\le i\le r\).
\end{remark}
 
\begin{lemma}\label{lemma:descendants_contain_left_elements}
	Let \(S\) and \(S'\) be numerical semigroups.
	If \(S'\preceq S\), then \(\leftsoper(S)\subseteq S'\).
\end{lemma}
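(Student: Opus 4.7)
If $S' = S$ the inclusion is immediate, so assume $S' \prec S$. The plan is to unpack the descendant relation via the chain given in \eqref{eq:sequence_of_descendants} and then argue that none of the finitely many elements removed along the chain can have been a left element of $S$.

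Concretely, fix a chain $S' = S_0 \subset S_1 \subset \cdots \subset S_r = S$ with $S_{i-1} = S_i \setminus \{x_i\}$ and $x_i \in \bigprimitivesoper(S_i)$ for $1 \le i \le r$, so that
\[
S' = S \setminus \{x_1,\dots,x_r\}.
\]
By Remark~\ref{rem:conductor_of_descendant} applied along the chain (equivalently, by the monotonicity of the conductor sequence noted just before that remark) we have $\conductoroper(S_i) \ge \conductoroper(S_r) = \conductoroper(S)$ for every $i$. Since $x_i$ is a big primitive of $S_i$, by definition $x_i \ge \conductoroper(S_i)$, and hence
\[
x_i \ge \conductoroper(S) \quad \text{for all } i \in \{1,\dots,r\}.
\]

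Consequently none of the $x_i$ belongs to $\leftsoper(S) = S \setminus \{\conductoroper(S),\rightarrow\}$, whose elements are strictly smaller than $\conductoroper(S)$. Therefore removing $\{x_1,\dots,x_r\}$ from $S$ leaves $\leftsoper(S)$ untouched, giving $\leftsoper(S) \subseteq S'$.

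There is essentially no obstacle: the only thing one must be careful about is the direction of the conductor inequality along the chain (conductors \emph{decrease} as one walks from $S'$ up to $S$, equivalently \emph{increase} from $S$ down to $S'$), which is exactly the content of Remark~\ref{rem:conductor_of_descendant}. Once that is in hand, the inclusion is forced because going to a child only deletes elements sitting at or to the right of the current conductor, hence at or to the right of $\conductoroper(S)$.
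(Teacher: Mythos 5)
Your proof is correct and follows essentially the same route as the paper's: both unpack $S'\preceq S$ into a chain of children, observe that each removed element is a big primitive of the current semigroup and hence at least $\conductoroper(S_i)\ge\conductoroper(S)$ by Remark~\ref{rem:conductor_of_descendant}, and conclude that $\leftsoper(S)$ is never touched. The only difference is cosmetic (your chain is indexed from $S'$ up to $S$ rather than from $S$ down to $S'$).
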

\begin{proof}
	The result is obvious if \(S'=S\). Suppose that \(S'\prec S\).
	There exists a finite sequence \(S=S_0\succ S_1\succ \cdots \succ S_r = S'\) of descendants of \(S\) that ends in \(S'\) such that each intermediate semigroup is a child of the preceding one in the sequence. That is, \(S_i\) (\(1\le i\le r\)) is obtained from \(S_{i-1}\) by removing a primitive that is no smaller than \(\conductoroper(S_{i-1})\). Note that \(\conductoroper(S_{i-1})\ge \conductoroper(S)\), by Remark~\ref{rem:conductor_of_descendant}. Thus the set of left elements of \(S\) is not touched along the sequence and therefore is contained in all semigroups of the sequence. 
\end{proof}
As a consequence of the preceding lemma and Remark~\ref{rem:primitive_in_subsemigroup} we get the following:
\begin{corollary}\label{cor:lefts_sequence_increases}
	If \(S'\preceq S\), then \(\leftprimitivesoper(S)\subseteq\leftprimitivesoper(S')\). In particular, \(\leftembeddingdimensionoper(S')\ge \leftembeddingdimensionoper(S)\).
\end{corollary}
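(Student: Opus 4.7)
The plan is to combine the two results the corollary is declared to descend from, namely Lemma~\ref{lemma:descendants_contain_left_elements} and Remark~\ref{rem:primitive_in_subsemigroup}, with the conductor comparison in Remark~\ref{rem:conductor_of_descendant}. The trivial case \(S'=S\) aside, I would pick an arbitrary \(p\in\leftprimitivesoper(S)\) and show membership in \(\leftprimitivesoper(S')\) by verifying the three defining conditions separately: (i) \(p\in S'\), (ii) \(p\in\primitivesoper(S')\), and (iii) \(p<\conductoroper(S')\).

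For (i), since \(p<\conductoroper(S)\) we have \(p\in\leftsoper(S)\), and Lemma~\ref{lemma:descendants_contain_left_elements} immediately gives \(\leftsoper(S)\subseteq S'\), so \(p\in S'\). For (ii), Remark~\ref{rem:conductor_of_descendant} yields \(S'\subseteq S\); combined with \(p\in\primitivesoper(S)\cap S'\), Remark~\ref{rem:primitive_in_subsemigroup} ensures \(p\in\primitivesoper(S')\). For (iii), the same Remark~\ref{rem:conductor_of_descendant} provides \(\conductoroper(S)<\conductoroper(S')\), so \(p<\conductoroper(S)<\conductoroper(S')\). This establishes \(p\in\leftprimitivesoper(S')\) and hence the desired inclusion \(\leftprimitivesoper(S)\subseteq\leftprimitivesoper(S')\).

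The ``in particular'' statement then follows by taking cardinalities on both sides of the inclusion. There is no real obstacle here: the corollary is a clean packaging of Lemma~\ref{lemma:descendants_contain_left_elements} (which is the substantive ingredient, as it controls what happens to left elements under descent) with the elementary Remark~\ref{rem:primitive_in_subsemigroup} (which lifts ``primitive'' from a larger semigroup to a smaller one containing the element). The only mild point to be careful about is ensuring that \(p\) remains strictly to the left of the possibly larger conductor \(\conductoroper(S')\), which is handled automatically by the strict inequality in Remark~\ref{rem:conductor_of_descendant}.
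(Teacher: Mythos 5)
Your proof is correct and follows essentially the same route as the paper, which simply states the corollary as a consequence of Lemma~\ref{lemma:descendants_contain_left_elements} and Remark~\ref{rem:primitive_in_subsemigroup}; you have merely expanded that one-line justification into its three natural verifications. Your explicit check that \(p<\conductoroper(S)\le\conductoroper(S')\) (via Remark~\ref{rem:conductor_of_descendant}) is a detail the paper leaves implicit, and it is handled correctly.
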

	The next result characterizes the descendants of a numerical semigroup in terms of elements to the left of \(\conductoroper(S)\): numerical semigroups descending from \(S\) are those that coincide with \(S\) in the interval \(\left[0,\Frobeniusoper(S)\right]\).
	
\begin{proposition}\label{prop:descendants_correspondance_with_lefts}
	Let \(S\) and \(S'\) be numerical semigroups. The following conditions are equivalent:
	\begin{enumerate}
		\item \(S'\preceq S\);
		\item \(\leftsoper(S)=S\cap\left[0,\Frobeniusoper(S)\right]=S'\cap\left[0,\Frobeniusoper(S)\right]\).
	\end{enumerate}
\end{proposition}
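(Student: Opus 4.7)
The first equality asserted in condition (2), namely $\leftsoper(S) = S \cap [0,\Frobeniusoper(S)]$, holds by definition (since $\Frobeniusoper(S) = \conductoroper(S) - 1$), so the content of (2) really is the equality $\leftsoper(S) = S' \cap [0,\Frobeniusoper(S)]$. I would open the proof by pointing this out, which lets us focus on the substantive equality.

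The direction $(1)\Rightarrow(2)$ is almost immediate from results already in the section. From Lemma~\ref{lemma:descendants_contain_left_elements} we get $\leftsoper(S)\subseteq S'$, hence $\leftsoper(S)\subseteq S'\cap[0,\Frobeniusoper(S)]$. For the reverse inclusion, Remark~\ref{rem:conductor_of_descendant} gives $S'\subseteq S$, so $S'\cap[0,\Frobeniusoper(S)] \subseteq S\cap[0,\Frobeniusoper(S)] = \leftsoper(S)$.

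For $(2)\Rightarrow(1)$, I would argue by induction on the (finite) cardinality of $S\setminus S'$. First I would verify $S'\subseteq S$ directly from (2): any $x\in S'$ with $x<\conductoroper(S)$ lies in $S'\cap[0,\Frobeniusoper(S)]=\leftsoper(S)\subseteq S$, and any $x\geq\conductoroper(S)$ is automatically in $S$. If $S\setminus S'$ is empty, then $S=S'$ and $S'\preceq S$ trivially. Otherwise, the key observation is that since $S$ and $S'$ agree on $[0,\Frobeniusoper(S)]$, every element of $S\setminus S'$ sits in $\{\conductoroper(S),\rightarrow\}$; in particular $\Frobeniusoper(S')\geq\conductoroper(S)$, since $\Frobeniusoper(S')\notin S'$ but any gap of $S$ below $\conductoroper(S)$ is also a gap of $S'$ by the agreement on $[0,\Frobeniusoper(S)]$, so $\Frobeniusoper(S')$ must be bigger than $\Frobeniusoper(S)$.

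This is the crux of the argument and, I expect, the one place where care is needed: one has to use the two-sided agreement in (2) to force $\Frobeniusoper(S')$ into the big region. Once this is established, set $S'' = S'\cup\{\Frobeniusoper(S')\}$. Standard facts in the setup of the tree (Section~\ref{subsec:the_tree}) give that $S''$ is a numerical semigroup and that $S'$ is obtained from $S''$ by removing the big primitive $\Frobeniusoper(S')$; in particular $S'$ is a child of $S''$. Moreover $S''\subseteq S$, because $\Frobeniusoper(S')\geq\conductoroper(S)$ puts the newly added element into $S$. Finally $S''$ still satisfies (2) relative to $S$, because the element adjoined to $S'$ lies strictly above $\Frobeniusoper(S)$ and hence does not perturb the intersection with $[0,\Frobeniusoper(S)]$. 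Since $|S\setminus S''| = |S\setminus S'| - 1$, the induction hypothesis gives $S''\preceq S$, and combining with $S'\prec S''$ yields $S'\prec S$, which closes the argument.
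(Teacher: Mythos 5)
Your proof is correct and follows essentially the same route as the paper: both reduce condition (2) to the single equality \(\leftsoper(S)=S'\cap[0,\Frobeniusoper(S)]\), obtain \((1)\Rightarrow(2)\) from Lemma~\ref{lemma:descendants_contain_left_elements} together with \(S'\subseteq S\), and for the converse first force \(\Frobeniusoper(S')>\Frobeniusoper(S)\) and then rebuild \(S\) from \(S'\) by repeatedly adjoining the current Frobenius number (your induction on \(\lvert S\setminus S'\rvert\) is the paper's finite iterative chain in disguise). The one small quibble is that your justification of \(\Frobeniusoper(S')>\Frobeniusoper(S)\) invokes the implication ``gaps of \(S\) below \(\conductoroper(S)\) are gaps of \(S'\)'' when the direction actually needed is the reverse one --- if \(\Frobeniusoper(S')\le\Frobeniusoper(S)\) then every gap of \(S'\) lies in \([0,\Frobeniusoper(S)]\), hence is a gap of \(S\), so \(S\subseteq S'\) and therefore \(S=S'\), contradicting \(S\setminus S'\ne\emptyset\) --- but since the hypothesis is a two-sided agreement on \([0,\Frobeniusoper(S)]\), this is immediately repaired.
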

\begin{proof}
	\((1)\Rightarrow(2)\):
	By definition we have that \(\leftsoper(S)=S\cap\left[0,\Frobeniusoper(S)\right]\), thus we have to prove that 
	\(S\cap\left[0,\Frobeniusoper(S)\right]=S'\cap\left[0,\Frobeniusoper(S)\right]\).
	
	Recall that \(S'\subseteq S\) (Remark~\ref{rem:conductor_of_descendant})
	Thus, \(S'\cap\left[0,\Frobeniusoper(S)\right]\subseteq S\cap\left[0,\Frobeniusoper(S)\right]=\leftsoper(S)\). The reverse inclusion follows from Lemma~\ref{lemma:descendants_contain_left_elements}, which states that \(\leftsoper(S)\subseteq S'\): in fact, intersecting both sides with \(\left[0,\Frobeniusoper(S)\right]\), we get
	 \(\leftsoper(S)\cap\left[0,\Frobeniusoper(S)\right]\subseteq S'\cap\left[0,\Frobeniusoper(S)\right]\).
	
	\medskip
	
	\((2)\Rightarrow(1)\):
	We claim that if \(S'\ne S\) and \(S\cap\left[0,\Frobeniusoper(S)\right]=S'\cap\left[0,\Frobeniusoper(S)\right]\), then \(\Frobeniusoper(S)<\Frobeniusoper(S')\).
	
	Proof of the claim. We will prove that neither \(\Frobeniusoper(S)=\Frobeniusoper(S')\) nor \(\Frobeniusoper(S)>\Frobeniusoper(S')\) can occur.
	
	If \(\Frobeniusoper(S)=\Frobeniusoper(S')\), then \(\leftsoper(S)=S\cap\left[0,\Frobeniusoper(S)\right]=S'\cap\left[0,\Frobeniusoper(S)\right]=S'\cap\left[0,\Frobeniusoper(S')\right]=\leftsoper(S')\). Thus \(S\) and \(S'\) have the same left elements and the same conductor. This implies \(S=S'\), which is a contradiction.
	
	If \(\Frobeniusoper(S)>\Frobeniusoper(S')\), then \(\Frobeniusoper(S)\ge \conductoroper(S')\) and therefore \(\Frobeniusoper(S) \in S'\cap\left[0,\Frobeniusoper(S)\right]\). But \(\Frobeniusoper(S) \not\in S\cap\left[0,\Frobeniusoper(S)\right]\), which is a contradiction. The proof of the claim is complete.
	\medskip
	
	We will complete the proof by constructing a chain of ancestors of \(S'\) that ends in \(S\). 
	Assume that \(S\ne S'\) (if \(S=S'\), there is nothing to prove).
	
	Note that \(S\setminus S' \subseteq \left[\conductoroper(S),\Frobeniusoper(S')\right]\), since up to \(\Frobeniusoper(S)\) the semigroups coincide by hypothesis and both semigroups contain all integers greater that \(\Frobeniusoper(S')\), by the claim just proved. Thus \(S\setminus S' = \left[\conductoroper(S),\Frobeniusoper(S')\right]\setminus S'\).
	Thus, \(\Frobeniusoper(S')=\max(S\setminus S')\), and \(S''=S'\cup \{\Frobeniusoper(S')\}\) is the parent of \(S'\). If \(S''=S\), then the process ends; otherwise it can be continued with \(S''\) replacing \(S'\). This is a finite process since \(S\setminus S'\) is finite.
\end{proof}
	
The previous proposition has various interesting consequences.
	
\begin{corollary}\label{cor:lefts_is_descendant}
	If \(S\) is a numerical semigroup such that \(\gcd(\leftsoper(S))=1\), then \(\langle\leftsoper(S)\rangle\preceq S\).
\end{corollary}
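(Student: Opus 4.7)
The plan is to apply Proposition~\ref{prop:descendants_correspondance_with_lefts} directly with $S' = \langle\leftsoper(S)\rangle$. Under the hypothesis $\gcd(\leftsoper(S))=1$, this $S'$ is indeed a numerical semigroup, so the proposition is available. It then suffices to verify condition~(2) of that proposition, namely that $S$ and $S'$ agree on the interval $\left[0,\Frobeniusoper(S)\right]$ and that this common intersection is $\leftsoper(S)$.

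First I would note that the equality $\leftsoper(S)=S\cap\left[0,\Frobeniusoper(S)\right]$ holds by the very definition of left elements. So everything reduces to proving the single equality $S'\cap\left[0,\Frobeniusoper(S)\right]=\leftsoper(S)$. The inclusion $\leftsoper(S)\subseteq S'\cap\left[0,\Frobeniusoper(S)\right]$ is immediate, since $\leftsoper(S)$ is contained in the monoid it generates and its elements lie below $\conductoroper(S)$.

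For the reverse inclusion, I would take $x\in S'\cap\left[0,\Frobeniusoper(S)\right]$ and argue that $x\in\leftsoper(S)$. Since $x\in S'=\langle\leftsoper(S)\rangle$, it is a finite nonnegative integer combination of elements of $\leftsoper(S)\subseteq S$. As $S$ is closed under addition, we obtain $x\in S$. Combined with $x\le\Frobeniusoper(S)<\conductoroper(S)$, this gives $x\in S\cap\left[0,\Frobeniusoper(S)\right]=\leftsoper(S)$, finishing the verification.

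With both equalities established, Proposition~\ref{prop:descendants_correspondance_with_lefts} yields $S'\preceq S$, that is, $\langle\leftsoper(S)\rangle\preceq S$. There is no real obstacle here; the only point that deserves a moment of care is ensuring that $\langle\leftsoper(S)\rangle$ qualifies as a numerical semigroup, which is precisely the role of the hypothesis $\gcd(\leftsoper(S))=1$ and is needed for Proposition~\ref{prop:descendants_correspondance_with_lefts} to apply.
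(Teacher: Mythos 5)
Your proof is correct and takes the same route as the paper, which simply invokes Proposition~\ref{prop:descendants_correspondance_with_lefts} with \(S'=\langle\leftsoper(S)\rangle\); you merely spell out the verification of condition~(2) that the paper leaves implicit. The details you supply (the two inclusions and the remark that the gcd hypothesis makes \(\langle\leftsoper(S)\rangle\) a numerical semigroup) are all accurate.
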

\begin{proof}
	Take \(S'=\langle\leftsoper(S)\rangle\) in Proposition~\ref{prop:descendants_correspondance_with_lefts}.
\end{proof}
	
The following corollary gives a way to construct descendants with bigger left embedding dimension (when it is possible).

\begin{corollary}\label{cor:a_descendant}
	Let \(S\) be a non-superficial numerical semigroup and let \(p\) be a prime integer non smaller than \(\conductoroper(S)\). 
	Then \(T=\langle \leftsoper(S)\cup \{p\}\rangle\preceq S\).
\end{corollary}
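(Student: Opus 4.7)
The plan is to apply Proposition~\ref{prop:descendants_correspondance_with_lefts}, which characterizes the descendants of $S$ as precisely those numerical semigroups agreeing with $S$ on the interval $[0,\Frobeniusoper(S)]$. Thus the task reduces to two items: first confirm that $T=\langle\leftsoper(S)\cup\{p\}\rangle$ is indeed a numerical semigroup (i.e., that its generators are globally coprime), and second verify that $T\cap[0,\Frobeniusoper(S)]=\leftsoper(S)$.

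For coprimality, I would use the non-superficial hypothesis as follows. Since $S$ is non-superficial, $\depthoper(S)\ge 2$, so $\multiplicityoper(S)<\conductoroper(S)$ and hence $\multiplicityoper(S)\in\leftsoper(S)$. Because $p$ is prime, $\gcd(\leftsoper(S)\cup\{p\})$ divides $p$ and so equals $1$ or $p$. The value $p$ is ruled out: it would force $p\mid\multiplicityoper(S)$, contradicting $p\ge\conductoroper(S)>\multiplicityoper(S)>0$. So $T$ is a numerical semigroup.

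For the equality $T\cap[0,\Frobeniusoper(S)]=\leftsoper(S)$, the inclusion $\supseteq$ is immediate since every element of $\leftsoper(S)$ is a generator of $T$ and lies in $[0,\Frobeniusoper(S)]$ by definition. For the reverse inclusion, I would take $x\in T$ and split on whether the generator $p$ contributes to $x$ with a positive coefficient. If it does, then $x\ge p\ge\conductoroper(S)>\Frobeniusoper(S)$, so $x\notin[0,\Frobeniusoper(S)]$. If it does not, then $x\in\langle\leftsoper(S)\rangle\subseteq S$, so $x\in S\cap[0,\Frobeniusoper(S)]=\leftsoper(S)$.

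There is no serious obstacle here; the only delicate point is pinning down $\gcd=1$, and the chosen hypotheses (namely $S$ non-superficial, so that $\multiplicityoper(S)$ is a left element, together with $p$ prime) are tailored exactly for that purpose. Once both ingredients above are in hand, Proposition~\ref{prop:descendants_correspondance_with_lefts} delivers $T\preceq S$ immediately.
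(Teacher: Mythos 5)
Your proposal is correct and follows the same route as the paper: establish that $\leftsoper(S)\cup\{p\}$ is globally coprime using the non-superficiality of $S$ (which guarantees a positive left element, in your case $\multiplicityoper(S)$) together with the primality and size of $p$, and then invoke Proposition~\ref{prop:descendants_correspondance_with_lefts}. The only difference is that you explicitly verify the hypothesis $T\cap\left[0,\Frobeniusoper(S)\right]=\leftsoper(S)$ of that proposition, a detail the paper leaves implicit; your verification is sound.
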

\begin{proof}
	As \(S\) is non-superficial, \(\leftsoper(S)\) contains some positive integer.
	Now observe that a positive integer is coprime to any prime greater than it. Thus \(\leftsoper(S)\cup \{p\}\) is a set of globally coprime positive integers and therefore \(T\) is a numerical semigroup. If \(T=S\), nothing remains to be proved. Otherwise, 
	the result follows from Proposition~\ref{prop:descendants_correspondance_with_lefts}.
\end{proof}

A numerical semigroup has finitely many descendants if and only if its left elements are globally coprime, as stated by Bras-Amorós and Bulygin. 
The following arguments, included for the sake of completeness, constitute a proof of this fact, which is then stated as a proposition.

Let \(S\) be a numerical semigroup and let
 \(\mathcal{H}=\left\{\conductoroper(S),\rightarrow\right\}\setminus\langle\leftsoper(S)\rangle.\)
It follows from Proposition~\ref{prop:descendants_correspondance_with_lefts} that the set
\[\Delta(S)=\{\langle\leftsoper(S)\rangle_h\mid h\in \mathcal{H}\}\]
consists of descendants of \(S\), since \(\leftsoper(S)=\langle\leftsoper(S)\rangle_h\cap\left[0,\Frobeniusoper(S)\right]\), for any integer \(h\ge\conductoroper(S)\).

Note that \(\mathcal{H}\) is finite if and only if \(\langle\leftsoper(S)\rangle\) is a numerical semigroup.
 Consequently, if \(\gcd(\leftsoper(S))\ne 1\) (equivalently, \(\langle\leftsoper(S)\rangle\) is not a numerical semigroup), then \(S\) has infinitely many descendants.

On the other hand, since any descendant of \(S\) contains \(\langle\leftsoper(S)\rangle\) (by Lemma~\ref{lemma:descendants_contain_left_elements}), the number of descendants of \(S\) can not exceed the number of numerical semigroups containing \(\langle\leftsoper(S)\rangle\), which is finite if \(\gcd(\leftsoper(S)) = 1\). We have thus proved the following result.

\begin{proposition}[{\cite[Theorem~10]{Bras-AmorosBulygin2009SF-Towards}}]\label{prop:infinitely_many_descendants}
	Let \(S\) be a numerical semigroup. Then \[\gcd(\leftsoper(S))\ne 1 \text{ if and only if } S \text{ has infinitely many descendants}.\]
\end{proposition}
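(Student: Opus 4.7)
My plan is to establish the two implications separately, leveraging the tight correspondence between descendants of $S$ and its left elements given by Proposition~\ref{prop:descendants_correspondance_with_lefts} together with Lemma~\ref{lemma:descendants_contain_left_elements}.

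For the direction $\gcd(\leftsoper(S))=1 \Rightarrow S$ has finitely many descendants, I would invoke Lemma~\ref{lemma:descendants_contain_left_elements}: every descendant $S'$ of $S$ contains $\leftsoper(S)$, and hence contains $\langle\leftsoper(S)\rangle$. The coprimality hypothesis makes $\langle\leftsoper(S)\rangle$ a numerical semigroup, so its gap set $G$ is finite. Every numerical semigroup containing $\langle\leftsoper(S)\rangle$ is determined by the subset of $G$ that it additionally contains, so there are at most $2^{|G|}$ such over-semigroups, and therefore only finitely many candidates for $S'$.

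For the direction $\gcd(\leftsoper(S))\ne 1 \Rightarrow S$ has infinitely many descendants, I would exhibit an explicit infinite family. Set $c=\conductoroper(S)$ and $\mathcal{H}=\left\{c,\rightarrow\right\}\setminus\langle\leftsoper(S)\rangle$; for each $h\in\mathcal{H}$ define $T_h=\langle\leftsoper(S)\rangle_h$. To see that $T_h\preceq S$, by Proposition~\ref{prop:descendants_correspondance_with_lefts} it suffices to verify $T_h\cap[0,\Frobeniusoper(S)]=\leftsoper(S)$: since $h\ge c>\Frobeniusoper(S)$, elements of $T_h$ below $c$ come only from $\langle\leftsoper(S)\rangle$, and any sum of left elements that lies below $c$ belongs to $\leftsoper(S)$ because $S$ itself is closed under addition. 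The $T_h$ are pairwise distinct, because for $h<h'$ in $\mathcal{H}$ the integer $h$ lies in $T_h$ by construction but not in $T_{h'}$ (it is below $h'$ and, by choice, not in $\langle\leftsoper(S)\rangle$). Finally, when $\gcd(\leftsoper(S))=d\ge 2$ the submonoid $\langle\leftsoper(S)\rangle$ is contained in $d\mathbb{Z}$, so $\mathcal{H}$ contains every non-multiple of $d$ greater than $c$, an infinite set.

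The main obstacle I foresee is the verification $T_h\cap[0,\Frobeniusoper(S)]=\leftsoper(S)$, since one must rule out that, after adjoining the tail $\left\{h,\rightarrow\right\}$ and saturating to a numerical semigroup, some fresh small element below $c$ is created. The observation that any such hypothetical new element would have to be a sum of left elements, and therefore already belong to $\leftsoper(S)\subseteq S$ by closure, dispatches the worry; but it is precisely the step where the placement $h\ge c$ is genuinely used, and it is the one piece of the argument that merits being spelled out carefully.
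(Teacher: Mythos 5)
Your proposal is correct and follows essentially the same route as the paper: the finiteness direction via Lemma~\ref{lemma:descendants_contain_left_elements} (every descendant contains \(\langle\leftsoper(S)\rangle\), which has finitely many over-semigroups when it is a numerical semigroup), and the infiniteness direction via the family \(\langle\leftsoper(S)\rangle_h\) for \(h\in\{\conductoroper(S),\rightarrow\}\setminus\langle\leftsoper(S)\rangle\), certified as descendants by Proposition~\ref{prop:descendants_correspondance_with_lefts}. The only difference is that you spell out details the paper leaves implicit (the verification that \(\langle\leftsoper(S)\rangle_h\cap[0,\Frobeniusoper(S)]=\leftsoper(S)\), the pairwise distinctness of the \(T_h\), and the explicit \(2^{|G|}\) bound), which is a welcome but not substantive elaboration.
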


Suppose that \(\gcd(\leftsoper(S)) = 1\). Suppose in addition that \(\mathcal{H}\ne \emptyset\). Note that the Frobenius number of \(\langle\leftsoper(S)\rangle\) is \(\max(\mathcal{H})\); denote it by \(f\). 
The semigroup \(\langle\leftsoper(S)\rangle_{f}\in \Delta(S)\) is a descendant of~\(S\). Thus, its child \(\langle\leftsoper(S)\rangle_{f}\setminus\{f\}\) is a descendant of \(S\). Observing that this child is precisely \(\langle\leftsoper(S)\rangle\), we can write the following consequence of the arguments used to prove the proposition above.

\begin{corollary}\label{cor:lefts_descends_from_S_bound_for_genera_and_conductors}
	If \(\gcd(\leftsoper(S))=1\), then  \(\langle\leftsoper(S) \rangle\preceq S\). Furthermore, the genus and the conductor of \(\langle\leftsoper(S) \rangle\) are, respectively, the maximum among the genera and the maximum among the conductors of the numerical semigroups \(S'\) such that \(S'\preceq S\).	
\end{corollary}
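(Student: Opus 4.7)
The plan is to observe that the first claim $\langle\leftsoper(S) \rangle\preceq S$ has already been proved in the discussion immediately preceding the statement (and is in any case just Corollary~\ref{cor:lefts_is_descendant}), so I would simply invoke it. The interesting content is the maximality assertion, and the strategy is to reduce both maximality statements to a single set-theoretic inclusion.

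First I would set $T=\langle\leftsoper(S)\rangle$ and take an arbitrary descendant $S'\preceq S$. The key step is to prove that $T\subseteq S'$. By Lemma~\ref{lemma:descendants_contain_left_elements}, $\leftsoper(S)\subseteq S'$; since $S'$ is a submonoid of $\mathbb{N}$, it contains the submonoid generated by $\leftsoper(S)$, namely $T$. Thus $T$ sits inside \emph{every} descendant of $S$.

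From the inclusion $T\subseteq S'$ both maximality statements follow almost mechanically. For the conductor: every integer $n\ge \conductoroper(T)$ belongs to $T$, hence to $S'$, which forces $\conductoroper(S')\le \conductoroper(T)$. For the genus: the set of gaps of $S'$, being $\mathbb{N}^{*}\setminus S'$, is a subset of $\mathbb{N}^{*}\setminus T$, the set of gaps of $T$, so $\genusoper(S')\le \genusoper(T)$. Since $T$ is itself a descendant of $S$, both maxima are attained, and attained at $T$.

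There is no serious obstacle; the only point demanding a little care is not to confuse the tree order $\preceq$ with set inclusion $\subseteq$. The two go in opposite directions — being a descendant means being \emph{smaller} as a set, hence having \emph{larger} conductor and genus — and it is exactly this reversal that lets $T$, which is the \emph{smallest} descendant of $S$ in the sense of inclusion, realize both maxima simultaneously.
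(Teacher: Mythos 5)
Your proof is correct and follows essentially the same route as the paper: the first claim is exactly Corollary~\ref{cor:lefts_is_descendant}, and the paper's justification of the maximality assertion is likewise the observation (from the proof of Proposition~\ref{prop:infinitely_many_descendants}, via Lemma~\ref{lemma:descendants_contain_left_elements}) that every descendant \(S'\) of \(S\) contains \(\langle\leftsoper(S)\rangle\), whence \(\conductoroper(S')\le\conductoroper(\langle\leftsoper(S)\rangle)\) and \(\genusoper(S')\le\genusoper(\langle\leftsoper(S)\rangle)\), with both bounds attained at \(\langle\leftsoper(S)\rangle\) itself.
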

As there are only finitely many numerical semigroups with a given conductor or a given genus, one immediately gets the following corollary. 

\begin{corollary}\label{cor:left_descendant_no_bounds}
	Let \(S\) be a numerical semigroup such that \(\gcd(\leftsoper(S)) \ne 1\). Then there is no bound for the conductors nor for the genera of the descendants of \(S\).
\end{corollary}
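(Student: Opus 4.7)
The plan is to combine Proposition~\ref{prop:infinitely_many_descendants} with the classical finiteness observation that, for every fixed nonnegative integer $n$, there are only finitely many numerical semigroups whose conductor equals $n$ (and, consequently, only finitely many with a given genus). The argument is a direct pigeonhole/contradiction argument.

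First, I would invoke Proposition~\ref{prop:infinitely_many_descendants} to conclude that, under the hypothesis $\gcd(\leftsoper(S))\ne 1$, the set $\{S'\mid S'\preceq S\}$ is infinite. Second, I would record the finiteness facts: a numerical semigroup with conductor $c$ is uniquely determined by its intersection with the finite set $[0,c-1]$, so there are at most $2^{c-1}$ numerical semigroups with conductor $c$; in particular, for any integer $C\ge 0$, the set of numerical semigroups with $\conductoroper\le C$ is finite. The same conclusion for a bound on the genus follows either by the same sort of direct argument or, more economically, by invoking Remark~\ref{rem:bound_for_genus_in_terms_of_conductor}: $\conductoroper\le 2\genusoper$, so a genus bound implies a conductor bound.

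Finally, the conclusion is by contradiction. Suppose there exists $C$ with $\conductoroper(S')\le C$ for every $S'\preceq S$. Then all descendants of $S$ lie in a finite set of numerical semigroups, contradicting the infinitude obtained from Proposition~\ref{prop:infinitely_many_descendants}. The argument for genera is identical, using the reduction to the conductor case sketched above. Hence neither the conductors nor the genera of the descendants of $S$ admit any upper bound.

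The main obstacle is essentially non-existent: this corollary is a straightforward consequence of the preceding proposition combined with a standard pigeonhole principle. The only point that requires a moment's thought is making sure we have the right finiteness statement for each of the two invariants; the reduction from genus to conductor via $\conductoroper\le 2\genusoper$ makes this uniform and keeps the proof short.
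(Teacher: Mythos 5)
Your proof is correct and follows essentially the same route as the paper, which derives the corollary immediately from Proposition~\ref{prop:infinitely_many_descendants} together with the observation that there are only finitely many numerical semigroups with a given conductor or a given genus. Your write-up merely makes explicit the finiteness counts and the reduction from genus to conductor via Remark~\ref{rem:bound_for_genus_in_terms_of_conductor}, both of which are fine.
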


The semigroup \(\langle\leftsoper(S) \rangle\) (numerical or not) has no big primitives. Thus, the following holds:
\begin{remark}\label{rem:lefts_has_no_descendants}
	The numerical semigroup \(\langle\leftsoper(S) \rangle\) has no descendants.
\end{remark}

Next we give another easy remark that is worth to keep in mind.
\begin{remark}\label{rem:leaf_if_equal_lefts}
	Let \(S\) be a numerical semigroup. The following equivalences hold.
	\[S \text{ is a leaf } \Longleftrightarrow S=\langle\leftsoper(S) \rangle\Longleftrightarrow\primitivesoper(S)=\leftprimitivesoper(S).\]
\end{remark}

\begin{proposition}\label{prop:lefts_max_genus_max_cond_min_edim}
	Let \(S\) be a numerical semigroup such that \(\gcd(\leftsoper(S))=1\), and suppose that \(S\) is not a leaf. Let \(S'\) be a descendant of \(S\) and let \(T=\langle\leftsoper(S)\rangle\). If \(S'\ne T\), then
	\begin{enumerate}
		\item \(\genusoper(S')<\genusoper(T)\);
		\item \(\conductoroper(S')<\conductoroper(T)\);
		\item \(\embeddingdimensionoper(S')>\embeddingdimensionoper(T)\).
	\end{enumerate} 
\end{proposition}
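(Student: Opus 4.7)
The whole argument hinges on a single strict containment: $T \subsetneq S'$. Since $S$ is not a leaf, Remark \ref{rem:leaf_if_equal_lefts} gives $T = \langle\leftsoper(S)\rangle \ne S$, and then Corollary \ref{cor:lefts_is_descendant} yields $T \prec S$. For the given descendant $S'$ of $S$, Lemma \ref{lemma:descendants_contain_left_elements} provides $\leftsoper(S) \subseteq S'$, so $T = \langle\leftsoper(S)\rangle \subseteq S'$; combined with the hypothesis $S' \ne T$ this upgrades to $T \subsetneq S'$. All three conclusions will be extracted from this single fact.

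For (1) the argument is immediate: strictly enlarging a numerical semigroup removes at least one gap, so $T \subsetneq S'$ gives $\genusoper(S') < \genusoper(T)$.

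For (3), I would observe that the primitives of $T = \langle\leftsoper(S)\rangle$ coincide with $\leftprimitivesoper(S)$: every element of $\leftsoper(S)$ that can be decomposed using smaller elements of $S$ can be decomposed using smaller elements of $\leftsoper(S)$, and vice versa. Since these left primitives of $S$ lie in $\leftsoper(S) \subseteq S'$ and $S' \subset S$, Remark \ref{rem:primitive_in_subsemigroup} makes them primitives of $S'$ as well, giving $\primitivesoper(T) \subseteq \primitivesoper(S')$ and hence $\embeddingdimensionoper(T) \le \embeddingdimensionoper(S')$. If equality held, then $\primitivesoper(S') = \primitivesoper(T)$ and therefore $S' = \langle\primitivesoper(S')\rangle = \langle\primitivesoper(T)\rangle = T$, contradicting $S' \ne T$. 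So the inequality is strict.

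For (2) the easy half is $\conductoroper(S') \le \conductoroper(T)$: every integer $\ge \conductoroper(T)$ belongs to $T$, hence to $S'$. This matches, of course, the maximality statement in Corollary \ref{cor:lefts_descends_from_S_bound_for_genera_and_conductors}. To rule out equality, I would suppose $\conductoroper(S') = \conductoroper(T)$ and work with $A = S' \setminus T$, which is nonempty; by Proposition \ref{prop:descendants_correspondance_with_lefts}, elements of $S'$ at most $\Frobeniusoper(S)$ already lie in $\leftsoper(S) \subseteq T$, so $A \subseteq [\conductoroper(S),\Frobeniusoper(T)-1]$. The aim is to force $\Frobeniusoper(T) \in \langle T \cup A\rangle = S'$, contradicting $\Frobeniusoper(S') = \Frobeniusoper(T) \notin S'$. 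The natural attempt is to pick any $a \in A$ and write $\Frobeniusoper(T) = a + (\Frobeniusoper(T) - a)$ with $\Frobeniusoper(T) - a \in T$; this is exactly a symmetry-type property of $T$, trivially available when $T$ is symmetric but not automatic in general. I expect this last step — certifying, from the fact that $T = \langle\leftsoper(S)\rangle$ for a non-leaf $S$, that any gap $a$ of $T$ beyond $\conductoroper(S)$ admits a complementary element of $T$ — to be the main technical obstacle and the place where the structure of $\leftsoper(S)$ inside $T$ (rather than a generic subsemigroup) must really be used.
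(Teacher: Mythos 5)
Your treatment of (1) and (3) is correct and is essentially the paper's own argument: everything is extracted from \(T\subsetneq S'\), the genus drops because the finite gap sets are strictly nested, and the embedding dimension drops because \(\primitivesoper(T)=\leftprimitivesoper(S)\subseteq\primitivesoper(S')\) with equality of cardinalities forcing \(S'=T\). (Your route to \(\primitivesoper(T)\subseteq\primitivesoper(S')\), via \(\primitivesoper(T)=\leftprimitivesoper(S)\) and Remark~\ref{rem:primitive_in_subsemigroup} applied to the pair \((S,S')\), is actually more careful than the paper's, which invokes that remark for the pair \((S',T)\) in a direction it does not literally give.)

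Your suspicion about (2) is fully justified: the strictness step you could not certify is not a technical obstacle to be overcome --- it is false, and so is part (2) as stated. Take \(S=\langle 7,8,9,19,20\rangle=\{0,7,8,9\}\cup\{14,\rightarrow\}\). Then \(\conductoroper(S)=14\), \(\leftsoper(S)=\{0,7,8,9\}\) has gcd \(1\), and \(19,20\) are big primitives, so \(S\) is not a leaf. Here \(T=\langle 7,8,9\rangle\) has gaps \(\{1,\dots,6,10,\dots,13,19,20\}\), hence \(\Frobeniusoper(T)=20\) and \(\conductoroper(T)=21\). The child \(S'=S\setminus\{20\}=T\cup\{19\}\) is a descendant of \(S\) with \(S'\ne T\), yet \(20\notin S'\), so \(\conductoroper(S')=21=\conductoroper(T)\). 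This is exactly the obstruction you isolated: \(19\) is a gap of \(T\) above \(\conductoroper(S)\) whose complement \(\Frobeniusoper(T)-19=1\) is also a gap of \(T\), so adjoining \(19\) does not fill in \(\Frobeniusoper(T)\). (Parts (1) and (3) do hold here: \(\genusoper(S')=11<12=\genusoper(T)\) and \(\embeddingdimensionoper(S')=4>3=\embeddingdimensionoper(T)\).)

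The paper's own proof of (2) is the culprit: it asserts that equality of the conductors would force equality of the sets, which is valid for genera (strictly nested finite complements) but not for conductors of nested numerical semigroups. Only the non-strict inequality \(\conductoroper(S')\le\conductoroper(T)\) --- which you did prove, and which also follows from Corollary~\ref{cor:lefts_descends_from_S_bound_for_genera_and_conductors} --- is used later, in the proof of Proposition~\ref{prop:characterizing-M-cutting-via-left-elements}, so the error does not propagate; but item (2) should be weakened to \(\conductoroper(S')\le\conductoroper(T)\).
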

\begin{proof}
	\textit{(1)} and \textit{(2)}:
	By Remark~\ref{lemma:descendants_contain_left_elements}, \(T\subseteq S'\), thus \(\genusoper(S')\le\genusoper(T)\) and \(\conductoroper(S')\le\conductoroper(T)\). As equality of the genera or the conductors would force equality of the sets, we conclude that the inequalities have to be strict. 
	
	\textit{(3)}: As \(T\subseteq S'\), Remark~\ref{rem:primitive_in_subsemigroup} implies that \(\primitivesoper(S')\supseteq\primitivesoper(T)\), and consequently, \(\embeddingdimensionoper(S')\ge\embeddingdimensionoper(T)\). The equality \(\embeddingdimensionoper(S')=\embeddingdimensionoper(T)\) would imply \(\primitivesoper(S')=\primitivesoper(T)\), and therefore \(S' = T\). Thus \(\embeddingdimensionoper(S')\ne\embeddingdimensionoper(T)\), and consequently \(\embeddingdimensionoper(S')>\embeddingdimensionoper(T)\).
\end{proof}

\section{Trimming the whole tree}\label{sec:trimming_global}
Although the ideas presented here can be easily adapted to other results, only three properties (with parameters) will be considered. These properties, defined in the first subsection, are motivated by the results presented in Section~\ref{sec:motivating}.
The mission of the parameters is to keep the paper actual even if the theorems are proven with bigger parameters. 
From an experimental point of view, it makes sense to code using these parameters.

Cutting semigroups for properties on numerical semigroups are introduced in the second subsection. They are semigroups that satisfy the properties in cause, so as all their descendants do.

The remaining part of this section explores how to trim the semigroups tree to avoid the cutting semigroups.
Rather than treating this aspect in a unified section we made the choice of devoting a separate subsection to each of the three properties considered. By doing so, one can have an exposition avoiding the need to keep in mind results that are not really needed at the time. This can be an advantage in a first reading. One can see disadvantages, one of them being the repetition of the structure of some subsections and of some arguments.

Some experiments shall appear in~\cite{Delgado2019-Probing}.
\subsection{Some properties}\label{subsec:some_properties}
 	A generic property on numerical semigroups will be denoted by \(\mathcal{P}\).
 	Occasionally we will denote the fact that a numerical semigroup \(S\) satisfies the property~\(\mathcal{P}\) by \(S\models \mathcal{P}\). Next we define the three properties (with parameters) considered in this text.
 
 	\begin{definition}\label{def:property_G}
 		Let \(S\) be a numerical semigroup and let \(g\) be a positive integer. We say that \(S\) has the property \(\prop Gg\) if its genus is no greater than \(g\), that is, \(S\models \prop Gg\Leftrightarrow\genusoper(S) \le g\). 
 	\end{definition}
 
 	The following definition is motivated by Theorem~\ref{th:large-mult-are-wilf}. 
 	\begin{definition}\label{def:property_H}
 		Let \(\ell\) be a positive real number. We say that \(S\) satisfies \(\prop H{\ell}\) if \(\conductoroper(S)\le \ell\multiplicityoper(S)\). In symbols, \(S\models \prop H{\ell}\Leftrightarrow \frac{\conductoroper(S)}{\multiplicityoper(S)}\le\ell\).
 	\end{definition}
 	Note that semigroups satisfying \(\prop H{3}\) are the generic ones.
 	We will occasionally refer to semigroups that satisfy \(\prop H{\ell}\) as \emph{\(\ell\)-depth semigroups} or, more vaguely, when \(\ell\) is understood, as \emph{small depth semigroups}. We refer to numerical semigroups that are not of small depth (that is, satisfy \(\frac{\conductoroper(S)}{\multiplicityoper(S)} > \ell\)) as being of \emph{large depth}. 
 
 	Next we present a definition motivated by Theorem~\ref{th:large-ed-are-wilf}.
 	\begin{definition}\label{def:property_D}
 		Let \(\kappa\) be a positive real number. We say that \(S\) satisfies \(\prop D{\kappa}\) if \(\embeddingdimensionoper(S) \ge \multiplicityoper(S)/\kappa\). In symbols, \(S\models \prop D{\kappa}\Leftrightarrow \frac{\multiplicityoper(S)}{\embeddingdimensionoper(S)}\le \kappa\).
 	\end{definition}
 	Recall that the embedding dimension of a numerical semigroup is no bigger than its multiplicity.
 	Numerical semigroups satisfying \(\prop D{\kappa}\) will sometimes be referred to as \emph{\(\kappa\)-large density semigroups} or simply as \emph{semigroups of large density}. Semigroups whose density is not large (that is, satisfying \(\frac{\multiplicityoper(S)}{\embeddingdimensionoper(S)} > \kappa\)) are said to be of \emph{little density}.
	\medskip
 
 	One can obtain new properties by using conjunctions of the properties just defined. The next example illustrates possible uses of the properties defined and some of its conjunctions. 
 	\begin{example}
 		\begin{enumerate}
 			\item 
 			Non Wilf semigroups (that is, counter-examples for Wilf's conjecture) are among the semigroups that do not satisfy the property ``\(\prop D3\) and \(\prop H3\) and \(\mathcal{G}_{60}\)''. Note the use of Theorems~\ref{th:large-ed-are-wilf} and~\ref{th:large-mult-are-wilf} and of Proposition~\ref{prop:genus-60-are-Wilf}.
 			Thus, when searching for non Wilf semigroups, semigroups that are known in advance to satisfy any of these properties do not need to be tested.
 			\item
 			When looking for non Eliahou numerical semigroups or for \(0\)-Wilf semigroups, semigroups known in advance to be \(\prop H3\) semigroups do not need to be considered (so as the \(\mathcal{G}_{60}\)-cutting semigroups, as long as the examples in Example~\ref{ex:Eliahou-Fromentin-examples} are not forgotten). Note the use of Theorem~\ref{th:eliahou-large-mult} and Proposition~\ref{prop:generic_are_0_Wilf}.
 		\end{enumerate}
 	\end{example}
  
\subsection{Cutting semigroups and trimmed trees}\label{subsec:cutting_semigroups}
 	Cutting semigroups will be used to trim the classical tree of numerical semigroups of those branches that are guaranteed not to contribute for the problem under consideration. Note that when a node is cut off, all its descendants are cut off as well -- this gives an indication on how to define cutting semigroups: there must be some kind of heredity.
 
 	\begin{definition}\label{def:cutting_semigroup}
 		Let \(\mathcal{P}\) be a property on numerical semigroups. A numerical semigroup \(S\) is said to be a  \emph{cutting semigroup for \(\mathcal{P}\)} if itself and all its descendants satisfy the property \(\mathcal{P}\). In symbols: \(S\) is a  \emph{cutting semigroup for \(\mathcal{P}\)} if
 		\[S'\preceq S \Rightarrow S'\models \mathcal{P}.\]
 	\end{definition}
 	In other words, a numerical semigroup \(S\) is a cutting semigroup for a property \(\mathcal{P}\) on numerical semigroups if \(S\) satisfies \(\mathcal{P}\) and all the descendants inherit this property from \(S\).
 	The terminology \emph{\(\mathcal{P}\)-cutting semigroup} will also be used. 
 
  	The following remark follows immediately from the definition. 
 	\begin{remark}\label{rem:leaf_cutting}
 		A leaf is \(\mathcal{P}\)-cutting semigroup if and only if it satisfies the property~\(\mathcal{P}\). 
 	\end{remark}
 	Having at hand the tree represented in Figure~\ref{fig:58111214} is perhaps advantageous to better follow the next concrete example, which aims to contribute to better understanding the concept. 
 	\begin{example}
 		Suppose that one is interested in computing the set of semigroups of genus~\(11\) (or simply its cardinality).
 		While exploring the numerical semigroups tree, the node labelled \(S=\langle 5,8,11,12,14\rangle\) is reached at some point, since \(S\) has genus \(7\). This last assertion can be easily verified by hand and confirmed by using \textsf{GAP}, as in the following \textsf{GAP} session.
 		\begin{verbatim}
 			gap> s := NumericalSemigroup(5,8,11,12,14);;
 			gap> Genus(s);
 			7
 		\end{verbatim} \vspace{-\bigskipamount}
 		The numerical semigroups of genus \(11\) in the subtree rooted at \(S\) are  \(\langle 5,8,19,22\rangle\) and \(\langle 5,8,17\rangle\).
 		The semigroup \(S_1=\langle 5,8,11,14,17\rangle\) is among the descendants of \(S\) (in fact, \(S_1=S\setminus\{12\}\)). But the genera of the descendants of \(S_1\) are smaller than \(11\), since the maximum is attained at the numerical semigroup generated by the (coprime) left primitives \(5,8,11\) of \(S_1\), as can be seen analysing Figure~\ref{fig:58111214}. Note that this is just a consequence of Proposition~\ref{prop:infinitely_many_descendants}.  
 		\begin{verbatim}
 		gap> Genus(NumericalSemigroup(5,8,11));
 		10
 		\end{verbatim}  \vspace{-\bigskipamount}
 		As the genus of \(\langle 5,8,11\rangle\) is \(10\), 
 		there is no need to consider neither  \(S_1\) nor its descendants to determine the numerical semigroups of genus \(11\). \(S_1\) is a cutting semigroup for the property \(\mathcal{G}_{10}\): ``has genus smaller than \(11\)''. 
 	\end{example}
 
 	The idea presented in the above example will be formalized below, in Section~\ref{subsec:avoid_small_genera}.
 
 	\begin{definition}\label{def:trimmed-tree}
 		Let \(\mathcal{P}\) be a property on numerical semigroups. The \(\mathcal{P}\)-trimmed tree is the subtree of the classical tree of numerical semigroups consisting of the numerical semigroups that are not \(\mathcal{P}\)-cutting.
 	\end{definition}

	On the absence of possible confusion concerning the property in cause or when referring to a generic property, when referring \(\mathcal{P}\)-cutting semigroups or the \(\mathcal{P}\)-trimmed tree, the \(\mathcal{P}\) may be omitted.
	\medskip

	The concepts of trimmed tree and cutting semigroups may be used with various purposes, as will be clear from what remains in the present text. For instance, observing that the \(\mathcal{P}\)-trimmed tree contains all numerical semigroups that do not satisfy \(\mathcal{P}\) (it also contains those numerical semigroups that have some descendant that does not satisfy \(\mathcal{P}\)), it can be used to count the non-\(\mathcal{P}\)-cutting semigroups.

 	Another, possibly more, inspiring example follows.
 	Suppose that one is interested in finding numerical semigroups that do not satisfy a property~\(\mathcal{Q}\). 
 	If one takes~\(\mathcal{Q}\) as the property ``is Wilf'', the aim is finding a counter example for Wilf's conjecture. Then, assuming that the search is done by exploring the classical tree of numerical semigroups, one can cut the nodes corresponding to cutting semigroups for some property \(\mathcal{P}\) that implies \(\mathcal{Q}\), without loosing any counter-example. Continuing the inspiring example: taking~\(\mathcal{Q}\) as the property ``is Wilf'', one can, for instance, take  \(\mathcal{P}\) as ``has \(3\)-large density'' (see Theorem~\ref{th:large-ed-are-wilf}) or is generic
 	(see Theorem~\ref{th:large-mult-are-wilf}).
  
\subsection{Avoiding small genera}\label{subsec:avoid_small_genera}

	Recall that, for a positive integer \(g\), we say that a numerical semigroup \(S\) satisfies the property \(\prop Gg\) if \(\genusoper(S) \le g\).
	From the definition of cutting semigroup for some property it follows that a numerical semigroup is a cutting semigroup for \(\prop Gg\) if and only if neither its genus nor the genus of any of its descendants is greater than \(g\).

	The following proposition characterizes \(\mathcal{G}_g\)-cutting semigroups in terms of the semigroups generated by their left elements.

	\begin{proposition}\label{prop:characterizing-G-cutting-via-left-elements}
		Let \(S\) be a numerical semigroup and let \(T=\langle\leftsoper(S)\rangle\) be the semigroup generated by its left elements. 
		Then \(S\) is a \(\prop Gg\)-cutting semigroup if and only if \(T\) is a numerical semigroup and \(\genusoper(T)\le g\).	
	\end{proposition}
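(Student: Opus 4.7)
The plan is to prove both implications by leveraging the two main results from Section~\ref{sec:lefts} about the semigroup generated by left elements: Corollary~\ref{cor:lefts_descends_from_S_bound_for_genera_and_conductors} (when $\gcd(\leftsoper(S))=1$, the semigroup $T$ realises the maximum genus among descendants of $S$) and Corollary~\ref{cor:left_descendant_no_bounds} (when $\gcd(\leftsoper(S))\ne 1$, descendants have unbounded genera). This cleanly splits the argument into two cases according to whether $T$ is a numerical semigroup.

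For the forward direction, suppose $S$ is $\prop Gg$-cutting. First I would rule out the case $\gcd(\leftsoper(S))\ne 1$: if $T$ fails to be a numerical semigroup, then Corollary~\ref{cor:left_descendant_no_bounds} produces descendants of $S$ with arbitrarily large genus, contradicting the cutting hypothesis (which, via the definition, forces every descendant to have genus at most $g$). Hence $T$ must be a numerical semigroup. Then Corollary~\ref{cor:lefts_descends_from_S_bound_for_genera_and_conductors} gives $T\preceq S$, so applying the cutting property to $S'=T$ yields $\genusoper(T)\le g$.

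For the backward direction, assume $T$ is a numerical semigroup and $\genusoper(T)\le g$. Take any $S'\preceq S$. By the maximality statement in Corollary~\ref{cor:lefts_descends_from_S_bound_for_genera_and_conductors}, $\genusoper(S')\le \genusoper(T)\le g$, so every descendant of $S$ (including $S$ itself, via $\preceq$) satisfies $\prop Gg$. By Definition~\ref{def:cutting_semigroup}, $S$ is a $\prop Gg$-cutting semigroup.

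There is no real obstacle here: once the groundwork of Section~\ref{sec:lefts} is in place, the proof is essentially a bookkeeping exercise identifying the extremal descendant $T$. The only subtlety worth flagging is the case when $S$ is a leaf, in which case $T=S$ by Remark~\ref{rem:leaf_if_equal_lefts}; but the argument above still applies uniformly because $\preceq$ allows equality and the maximality claim in Corollary~\ref{cor:lefts_descends_from_S_bound_for_genera_and_conductors} then trivially reduces to the genus of $S$ itself.
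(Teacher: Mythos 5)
Your proof is correct and follows essentially the same route as the paper: rule out $\gcd(\leftsoper(S))\ne 1$ via the unboundedness of descendants' genera, identify $T$ as a descendant to get the forward implication, and bound every descendant's genus by $\genusoper(T)$ for the converse. The only cosmetic difference is that you cite the packaged Corollary~\ref{cor:lefts_descends_from_S_bound_for_genera_and_conductors} where the paper invokes Proposition~\ref{prop:descendants_correspondance_with_lefts} and Lemma~\ref{lemma:descendants_contain_left_elements} directly; these rest on the same containment $T\subseteq S'$.
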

	\begin{proof}
		If \(S\) is a \(\prop Gg\)-cutting semigroup, then \(T\) must be a numerical semigroup, since otherwise \(S\) has descendants with genus greater than \(g\), by Corollary~\ref{cor:left_descendant_no_bounds}. 
		But then, by Proposition~\ref{prop:descendants_correspondance_with_lefts}, \(T\) is a descendant of \(S\). From the definition of \(\prop Gg\)-cutting semigroup, it follows that \(T\) satisfies \(\prop Gg\), that is, \(\genusoper(T)\le g\).
	\medskip
	
		For the converse, assuming that \(T\) is a numerical semigroup such that \(\genusoper(T)\le g\), we prove that if \(S'\preceq S\), then \(\genusoper(S')\le g\).
		If \(S'\) is a descendant of \(S\), then \(T\subseteq S'\), by Lemma~\ref{lemma:descendants_contain_left_elements}. This implies that \(\genusoper(S')\le \genusoper(T)\le g\).
	\end{proof}

	The property \(\mathcal{G}_g\) permits to take advantage of computations formerly done.
	For instance, when looking for the number of numerical semigroups of some genus bigger than \(70\) (notice the connection with Challenge~\ref{challenge:genus}), \(\mathcal{G}_{70}\)-cutting semigroups can be cut off. This, allied to the fact that the number of numerical semigroups is known for genera non greater that \(70\) (Remark~\ref{rem:N(70)}), reduces the number of computations needed. 

	Whether this theoretical approach leads to computational advantages depends on the capability of checking if a numerical semigroup is \(\prop Gg\)-cutting with low computational cost. Notice that Proposition~\ref{prop:characterizing-G-cutting-via-left-elements} reduces the problem to computing a greatest common divisor of the left elements and, if it is \(1\), checking whether the genus of the numerical semigroup generated by the left elements does not exceed a certain number. Although possibly faster than computing the genus, this may be a highly time consuming task (see Section~\ref{sec:technical_details} for some further comments). 
	We do not know how to avoid these extra computations if one aims to trim the tree for the property \(\prop Gg\). 

\subsection{Avoiding generic semigroups}\label{sec:avoid_generic_semigroups}
	Recall that, for a positive real number \(\ell\), \(S\) is said to satisfy \(\prop H{\ell}\) if \(\conductoroper(S)\le {\ell}\multiplicityoper(S) \). Generic semigroups are the \(3\)-little depth ones.

	It follows from the definition that a numerical semigroup~\(S\) is a cutting semigroup for~\(\prop H{\ell}\) if and only if neither the conductor of~\(S\) nor the conductor of any of its descendants with the same multiplicity is bigger than \({\ell}\multiplicityoper(S)\).

	The following proposition characterizes the \(\prop H{\ell}\)-cutting semigroups in terms of the semigroups generated by their left elements. Its statement and proof are analogous to those of Proposition~\ref{prop:characterizing-G-cutting-via-left-elements}.
	\begin{proposition}\label{prop:characterizing-M-cutting-via-left-elements}
		Let \(S\) be a numerical semigroup and let \(T=\langle\leftsoper(S)\rangle\).
		Then \(S\) is a \(\prop H{\ell}\)-cutting semigroup if and only if \(\gcd(\leftsoper(S)) = 1\) and \(T\) is a \(\prop H{\ell}\)-cutting semigroup.
	\end{proposition}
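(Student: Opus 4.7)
The proof plan mirrors that of Proposition~\ref{prop:characterizing-G-cutting-via-left-elements}. The key auxiliary observation I will use is that, provided $\multiplicityoper(S)<\conductoroper(S)$, the multiplicity is constant along every chain of descendants of $S$: since $\multiplicityoper(S)$ then lies in $\leftsoper(S)$, Lemma~\ref{lemma:descendants_contain_left_elements} gives $\multiplicityoper(S)\in S'$ for all $S'\preceq S$, and the inclusion $S'\subseteq S$ from Remark~\ref{rem:conductor_of_descendant} forces every positive element of $S'$ to be at least $\multiplicityoper(S)$, so $\multiplicityoper(S')=\multiplicityoper(S)$. The degenerate case $\leftsoper(S)\subseteq\{0\}$ is trivial on both sides of the equivalence and can be dispensed with separately.

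For the forward direction I will argue the $\gcd$ clause by contrapositive. If $\gcd(\leftsoper(S))\neq 1$, then Corollary~\ref{cor:left_descendant_no_bounds} supplies descendants of $S$ with arbitrarily large conductors, while the constancy of the multiplicity established above keeps $\multiplicityoper$ fixed along those descendants; thus the ratio $\conductoroper/\multiplicityoper$ is unbounded along the descent of $S$, contradicting that $S$ is $\prop H{\ell}$-cutting. Hence $\gcd(\leftsoper(S))=1$, and $T=\langle\leftsoper(S)\rangle$ is a numerical semigroup. By Corollary~\ref{cor:lefts_is_descendant}, $T\preceq S$, so every descendant of $T$ is also a descendant of $S$ and therefore satisfies $\prop H{\ell}$; this is exactly the statement that $T$ is $\prop H{\ell}$-cutting.

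For the converse, assume $\gcd(\leftsoper(S))=1$ and that $T$ is $\prop H{\ell}$-cutting. Given any $S'\preceq S$, I will show $\conductoroper(S')\le\ell\multiplicityoper(S')$. Corollary~\ref{cor:lefts_descends_from_S_bound_for_genera_and_conductors} yields $\conductoroper(S')\le\conductoroper(T)$; the constancy of multiplicity (applied both to the chain from $S$ to $S'$ and from $S$ to $T$) yields $\multiplicityoper(S')=\multiplicityoper(S)=\multiplicityoper(T)$; and the assumption that $T$ itself satisfies $\prop H{\ell}$ yields $\conductoroper(T)\le\ell\multiplicityoper(T)$. Chaining these three bounds produces the required inequality for $S'$, so $S'\models\prop H{\ell}$ and therefore $S$ is $\prop H{\ell}$-cutting.

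The only delicate point in the whole argument — and therefore the main obstacle — is the constancy-of-multiplicity lemma, because a priori a descendant could have a larger multiplicity (as happens along the edges $\superficial m\to\superficial{m+1}$ that sew together the parallel trees $\mathbf{T}_m$). Once this lemma is in hand, both directions assemble directly from Lemma~\ref{lemma:descendants_contain_left_elements} and Corollaries~\ref{cor:lefts_is_descendant}, \ref{cor:left_descendant_no_bounds} and \ref{cor:lefts_descends_from_S_bound_for_genera_and_conductors}, with no further computation.
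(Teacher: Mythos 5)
Your proof is correct and follows essentially the same route as the paper's: Corollary~\ref{cor:left_descendant_no_bounds} combined with Corollary~\ref{cor:lefts_is_descendant} for the forward direction, and the maximality of \(\conductoroper(T)\) among the conductors of descendants (Corollary~\ref{cor:lefts_descends_from_S_bound_for_genera_and_conductors}, equivalently Proposition~\ref{prop:lefts_max_genus_max_cond_min_edim}) for the converse. The one point you add --- the explicit constancy-of-multiplicity argument for non-superficial \(S\), plus the separate dispatch of the superficial case --- is left implicit in the paper (which silently compares \(\conductoroper(S')\) with \(\ell\multiplicityoper(S)\) where the definition of \(\prop H{\ell}\) calls for \(\ell\multiplicityoper(S')\)), so making it explicit tightens rather than changes the argument.
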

	\begin{proof}
		If \(S\) is a \(\prop H{\ell}\)-cutting semigroup, then \(T\) must be a numerical semigroup, since otherwise \(S\) would have descendants with conductor larger than \(\ell\multiplicityoper\) , by Corollary~\ref{cor:left_descendant_no_bounds}. But then, by Corollary~\ref{cor:lefts_is_descendant}, \(T\) is a descendant of \(S\) and therefore is a \(\prop H{\ell}\)-cutting semigroup.
	
    	Conversely, assume that \(\gcd(\leftsoper(S)) = 1\) and that \(T\) is a \(\prop H{\ell}\)-cutting semigroup. This implies that \(\conductoroper(T)\le \ell\multiplicityoper(S)\). By Proposition~\ref{prop:lefts_max_genus_max_cond_min_edim}, the conductor of any descendant of \(S\) is no bigger than \(\conductoroper(T)\). Thus \(S\) is \(\prop H{\ell}\)-cutting, by definition.
	\end{proof}

	Similar comments to the ones made at the end of Section~\ref{subsec:avoid_small_genera}, concerning the property \(\prop{G}{g}\), can be made:
	whether this theoretical approach leads to computational advantages depends on the capability of checking if a numerical semigroup is \(\prop H{\ell}\)-cutting with low computational cost. 
	Notice that Proposition~\ref{prop:characterizing-M-cutting-via-left-elements} reduces the problem of determining \(\prop{H}{\ell}\) to computing a greatest common divisor and checking whether the conductor of a semigroup does not exceed a certain bound. Computing the conductor is well known to be hard (see~\cite{Ramirez-Alfonsin2005Book-Diophantine}).
	Although possibly faster than computing the conductor, testing whether the conductor does not exceed a certain bound can in practice be highly time consuming. Some further comments are in Section~\ref{sec:technical_details}.

	\medskip

	As the \(\prop H{3}\)-trimmed tree contains all non-generic semigroups, trimming the classical tree of numerical semigroups for \(\prop H{3}\) may be used with various purposes related to the challenges presented in Section~\ref{subsec:challenges}:
	\begin{itemize}
		\item count non-generic semigroups (and therefore the generic ones) (Question~\ref{challenge:count_generic});
		\item search for examples of non-Eliahou semigroups (Question~\ref{challenge:eliahou}, see also Theorem~\ref{th:eliahou-large-mult});
		\item search for \(0\)-Wilf semigroups that are non-quasi-superficial and have embedding dimension greater than \(2\) (Question~\ref{challenge:Wilf_number_0}, see also Proposition~\ref{prop:generic_are_0_Wilf}).
		\item test Wilf's conjecture (Question~\ref{challenge:wilf}, see also Theorem~\ref{th:large-mult-are-wilf}) -- Section~\ref{sec:avoid_large_edim} provides a seemingly much more efficient way.
	\end{itemize} 

\subsection{Avoiding \(\kappa\)-large density}\label{sec:avoid_large_edim}

	Let \(\kappa>1\) be a real number. Recall that a numerical semigroup \(S\) is said to satisfy \(\prop D{\kappa}\) if \(\embeddingdimensionoper(S) \ge \multiplicityoper(S)/\kappa\). 
	Informally, we refer to it by saying that \(S\) has  \(\kappa\)-large embedding dimension.

	It follows from the definition that a numerical semigroup is a cutting semigroup for \(\prop D{\kappa}\) if and only if neither the embedding dimension of \(S\) nor the embedding dimension of any of its descendants with the same multiplicity is smaller than \(\multiplicityoper(S)/{\kappa}\).

	For a superficial numerical semigroup \(\superficial{m}\) with multiplicity \(m\) we have that \(\embeddingdimensionoper(\superficial{m})=\multiplicityoper(\superficial{m})=m\). As \(\kappa>1\), we have that \(\embeddingdimensionoper(\superficial{m}) > \multiplicityoper(\superficial{m})/\kappa\) and therefore \(\superficial{m}\) satisfies \(\prop D{\kappa}\). 
	Thus we can state the following:

	\begin{proposition}\label{prop:characterizing-superficial-D-cutting-via-left-elements}
		Let \(\kappa>1\) be a real number and \(m\) a positive integer.
		The superficial numerical semigroup \(\superficial{m}\) is not \(\prop D{\kappa}\)-cutting.
	\end{proposition}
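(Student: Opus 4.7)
The plan is to exhibit an explicit descendant of \(\superficial{m}\) that fails \(\prop D{\kappa}\). As the authors have already noted that \(\superficial{m}\) itself satisfies \(\prop D{\kappa}\), the property fails to be cutting precisely when some descendant violates the property, so a single counterexample suffices.

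First, I would fix an integer \(m_0\) with \(m_0\ge m\) and \(m_0>2\kappa\); such \(m_0\) exists since \(\kappa\) is a fixed real number. Then I would consider the two-generated semigroup \(S=\langle m_0, m_0+1\rangle\), which is a numerical semigroup because \(\gcd(m_0,m_0+1)=1\). By construction, \(\multiplicityoper(S)=m_0\) and \(\embeddingdimensionoper(S)=2\), so
\[
\frac{\multiplicityoper(S)}{\embeddingdimensionoper(S)}=\frac{m_0}{2}>\kappa,
\]
which is exactly the statement that \(S\) does not satisfy \(\prop D{\kappa}\).

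Next I would verify that \(S\preceq\superficial{m}\). The edges \(\superficial{m}\to\superficial{m+1}\to\cdots\to\superficial{m_0}\) described in the construction of the classical tree show \(\superficial{m_0}\preceq\superficial{m}\). On the other hand, the subtree \(\mathbf{T}_{m_0}\) rooted at \(\superficial{m_0}\) has vertex set equal to the set of all numerical semigroups of multiplicity~\(m_0\); hence \(S\) is a vertex of \(\mathbf{T}_{m_0}\), that is, \(S\preceq\superficial{m_0}\). Transitivity of \(\preceq\) then gives \(S\preceq\superficial{m}\), as required. Combining with the previous paragraph, \(\superficial{m}\) has a descendant that does not satisfy \(\prop D{\kappa}\), so by Definition~\ref{def:cutting_semigroup} it is not \(\prop D{\kappa}\)-cutting.

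There is really no serious obstacle here: the only point requiring care is the choice of \(m_0\) large enough compared to both \(m\) and \(\kappa\), and the recognition that membership in \(\mathbf{T}_{m_0}\) together with the chain of superficial semigroups already furnishes the descendancy. One could alternatively phrase the construction by invoking Remark~\ref{rem:Silvester_implies_Wilf}'s observation that semigroups of embedding dimension~\(2\) are plentiful, but the direct exhibition of \(\langle m_0, m_0+1\rangle\) keeps the argument self-contained.
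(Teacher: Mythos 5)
Your proposal is correct and follows essentially the same route as the paper: both reduce to a sufficiently large multiplicity via the chain of superficial semigroups \(\mathcal{O}_m\to\mathcal{O}_{m+1}\to\cdots\) and then exhibit \(\langle m_0,m_0+1\rangle\) as a descendant of embedding dimension \(2\) that violates \(\prop D{\kappa}\). The only cosmetic difference is that the paper justifies descendancy by an explicit sequence of big-primitive removals, whereas you invoke the fact that \(\mathbf{T}_{m_0}\) contains all semigroups of multiplicity \(m_0\); both are valid.
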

	\begin{proof}
		Suppose that \(m\ge\lfloor2\kappa\rfloor+1\). 
		The numerical semigroup \(S=\langle m,m+1\rangle\) is a descendant of \(\superficial{m}\) (it can be obtained from \(\superficial{m}\) by removing \(m+2\) and then successively the smallest big primitive, until there is some). Observe that \(\embeddingdimensionoper(S)=2<\frac{m}{2}=\frac{\multiplicityoper(S)}{2}\). Thus, \(S\) does not satisfy \(\prop D{\kappa}\), and therefore \(\superficial{m}\) is not \(\prop D{\kappa}\)-cutting.
	
		Since a superficial semigroup is ancestor of any superficial semigroup with bigger multiplicity, there is no loss of generality in supposing that \(m\ge\lfloor2\kappa\rfloor+1\), and the result is proved.	
	\end{proof}
	Now that we know that superficial semigroups are not \(\prop D{\kappa}\)-cutting, a full characterization of the \(\prop D{\kappa}\)-cutting semigroups is obtained by treating the case of non-superficial numerical semigroups.

	\begin{proposition}\label{prop:characterizing-non-superficial-D-cutting-via-left-elements}
		Let \(S\) be a non-superficial numerical semigroup and let \(\kappa>1\) be a real number.
		Then:
		\begin{enumerate}[label={\alph*)}]
			\item if \(\gcd(\leftsoper(S)) = 1\), then \(S\) is a \(\prop D{\kappa}\)-cutting semigroup if and only if \(\leftembeddingdimensionoper(S)\ge \frac{\multiplicityoper(S)}{\kappa}\);
			\item if \(\gcd(\leftsoper(S)) \ne 1\), then \(S\) is a \(\prop D{\kappa}\)-cutting semigroup if and only if \(\leftembeddingdimensionoper(S)\ge \frac{\multiplicityoper(S)}{\kappa} -1\).
		\end{enumerate}
	\end{proposition}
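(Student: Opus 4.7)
The plan is to leverage two invariants along descent from \(S\): the multiplicity is preserved (since \(S\) is non-superficial, \(\multiplicityoper(S)<\conductoroper(S)\), and no big primitive of any descendant can equal \(\multiplicityoper(S)\)), and the set of left primitives is non-decreasing (Corollary~\ref{cor:lefts_sequence_increases}). Consequently the condition \(S'\models\prop D\kappa\) is uniform across descendants, becoming \(\embeddingdimensionoper(S')\ge\multiplicityoper(S)/\kappa\); so in both parts the task is to identify the minimum of \(\embeddingdimensionoper(S')\) as \(S'\) ranges over descendants of \(S\).

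For part~(a), set \(T=\langle\leftsoper(S)\rangle\), a numerical semigroup by hypothesis. By Lemma~\ref{lemma:lefts_elements_primitives} we have \(T=\langle\leftprimitivesoper(S)\rangle\), and Remark~\ref{rem:primitive_in_subsemigroup} applied to the inclusion \(T\subseteq S\) shows that \(\primitivesoper(T)=\leftprimitivesoper(S)\), whence \(\embeddingdimensionoper(T)=\leftembeddingdimensionoper(S)\). Corollary~\ref{cor:lefts_is_descendant} yields \(T\preceq S\). If \(S\) is itself a leaf then \(T=S\) and the claim is immediate; otherwise Proposition~\ref{prop:lefts_max_genus_max_cond_min_edim} asserts that every descendant \(S'\neq T\) satisfies \(\embeddingdimensionoper(S')>\embeddingdimensionoper(T)\). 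Hence the minimum embedding dimension across descendants is exactly \(\leftembeddingdimensionoper(S)\), and the equivalence in (a) follows.

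For part~(b), set \(d=\gcd(\leftsoper(S))>1\); then \(\langle\leftsoper(S)\rangle\) is not a numerical semigroup and by Remark~\ref{rem:leaf} \(S\) is not a leaf. For the forward implication, pick any prime \(p\ge\conductoroper(S)\); since \(d\le\multiplicityoper(S)<\conductoroper(S)\le p\), the prime \(p\) is not a multiple of \(d\), so \(p\notin\langle\leftsoper(S)\rangle\). Corollary~\ref{cor:a_descendant} gives the descendant \(T_p=\langle\leftsoper(S)\cup\{p\}\rangle\preceq S\). A direct check shows \(\primitivesoper(T_p)=\leftprimitivesoper(S)\cup\{p\}\): the left primitives of \(S\) remain primitive in \(T_p\) by Remark~\ref{rem:primitive_in_subsemigroup}, and \(p\) is primitive because any decomposition \(p=x+y\) with \(x,y\) nonzero forces \(x,y<p\), hence \(x,y\in\langle\leftsoper(S)\rangle\subseteq d\mathbb{Z}\), incompatible with \(p\notin d\mathbb{Z}\). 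Thus \(\embeddingdimensionoper(T_p)=\leftembeddingdimensionoper(S)+1\), and if \(S\) is \(\prop D\kappa\)-cutting, then \(T_p\models\prop D\kappa\) forces \(\leftembeddingdimensionoper(S)\ge\multiplicityoper(S)/\kappa-1\).

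For the converse of~(b), assume \(\leftembeddingdimensionoper(S)\ge\multiplicityoper(S)/\kappa-1\), let \(S'\preceq S\), and split on whether \(S'\) is a leaf. If \(S'\) is not a leaf, it has a big primitive, so \(\embeddingdimensionoper(S')\ge\leftembeddingdimensionoper(S')+1\ge\leftembeddingdimensionoper(S)+1\ge\multiplicityoper(S)/\kappa\). If \(S'\) is a leaf, then \(\embeddingdimensionoper(S')=\leftembeddingdimensionoper(S')\) and the inclusion \(\leftprimitivesoper(S)\subseteq\leftprimitivesoper(S')\) must be strict: equality combined with \(S'=\langle\leftsoper(S')\rangle\) would give \(S'=\langle\leftsoper(S)\rangle\), which is not a numerical semigroup; so again \(\embeddingdimensionoper(S')\ge\leftembeddingdimensionoper(S)+1\ge\multiplicityoper(S)/\kappa\). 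The most delicate step is the primitivity computation \(\embeddingdimensionoper(T_p)=\leftembeddingdimensionoper(S)+1\) in the forward direction of (b): the precise ``\(-1\)'' in the bound is accounted for by this single extra primitive \(p\), and the leaf case of the converse must absorb the same slack through strict growth of the left primitives — both arguments relying crucially on the hypothesis \(\gcd(\leftsoper(S))\ne 1\).
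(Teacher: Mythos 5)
Your proof is correct and follows essentially the same route as the paper: in case (a) the extremal descendant is \(\langle\leftsoper(S)\rangle\), and in case (b) the witness \(\langle\leftsoper(S)\cup\{p\}\rangle\) with \(p\) a large prime accounts for the \(-1\), with Corollary~\ref{cor:lefts_sequence_increases} supplying the lower bound for all other descendants. The only cosmetic differences are that the paper's converse of (b) avoids your leaf/non-leaf split by noting directly that \(\primitivesoper(S')\) must strictly contain the non-coprime set \(\leftprimitivesoper(S)\), and its forward direction of (b) is phrased as a contradiction.
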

	\begin{proof}
		As \(S\) is non-superficial, the multiplicity of any descendant of \(S\) is \(\multiplicityoper(S)\).
	
		If \(S\) is a leaf, then the result follows from Remarks~\ref{rem:leaf_if_equal_lefts} and~\ref{rem:leaf_cutting}.
	
		Assume now that \(S\) is not a leaf. 
	
		\textit{a)}: As \(\gcd(\leftsoper(S)) = 1\) and \(S\) is not a leaf, Corollary~\ref{cor:lefts_descends_from_S_bound_for_genera_and_conductors} guarantees that \(T=\langle\leftsoper(S)\rangle\) descends from \(S\). By 
		Remark~\ref{rem:edim_equals_edim_lefts}, we have \(\embeddingdimensionoper(T)=\leftembeddingdimensionoper(S)\).
	
		If \(S\) is \(\prop D{\kappa}\)-cutting, then any descendant of \(S\) satisfies \(\prop D{\kappa}\), by definition.
		In particular,  \(T\) satisfies \(\prop D{\kappa}\), and therefore \(\leftembeddingdimensionoper(S)=\embeddingdimensionoper(T)\ge \frac{\multiplicityoper(T)}{\kappa}=\frac{\multiplicityoper(S)}{\kappa}\). 	
		\medskip
	
		Conversely, assume that \(\leftembeddingdimensionoper(S)\ge \frac{\multiplicityoper(S)}{\kappa}\), and let \(S'\) be a descendant of \(S\). By Corollary~\ref{cor:lefts_sequence_increases}, \(\embeddingdimensionoper(S')\ge \leftembeddingdimensionoper(S)\), and therefore \(S'\) satisfies \(\prop D{\kappa}\).
		\medskip
		
		\textit{b)}: As \(\gcd(\leftsoper(S)) \ne 1\), we have that \(\embeddingdimensionoper(S)>\leftembeddingdimensionoper(S)\), since otherwise \(S\) would not be a numerical semigroup.
		Assume that \(\leftembeddingdimensionoper(S)\ge \frac{\multiplicityoper(S)}{\kappa} -1\) (which implies that \(S\) satisfies \(\prop D{\kappa}\)), and let \(S'\) be a descendant of \(S\).
	
		As \(\leftprimitivesoper(S)\subseteq\primitivesoper(S')\), by Corollary~\ref{cor:lefts_sequence_increases}, and the left primitives of \(S\) are not globally coprime, we have that \(\embeddingdimensionoper(S')\ge \leftembeddingdimensionoper(S)+1\).
	
		Thus \(\embeddingdimensionoper(S')\ge \leftembeddingdimensionoper(S)+1\ge\left(\frac{\multiplicityoper(S)}{\kappa} -1\right)+1=\frac{\multiplicityoper(S)}{\kappa}=\frac{\multiplicityoper(S')}{\kappa}\), and \(S'\) satisfies \(\prop D{\kappa}\). As \(S\) also satisfies this property, we conclude that \(S\) is a \(\prop D{\kappa}\)-cutting semigroup.
		\medskip
	
		The converse is proved by contradiction.
		Suppose that \(\leftembeddingdimensionoper(S) < \frac{\multiplicityoper(S)}{\kappa} -1\). A descendant of \(S\) that is not a \(\prop D{\kappa}\)-cutting semigroup will be exhibited.
		Choose a prime \(p\) greater than the conductor of \(S\). Note that, as \(\multiplicityoper(S)<p\), \(\gcd{(\multiplicityoper(S),p)}=1\). Then, by Corollary~\ref{cor:a_descendant}, \(T=\langle \leftsoper(S)\cup\{p\}\rangle\) is a descendant of \(S\) that does not satisfy \(\prop D{\kappa}\), since \(\embeddingdimensionoper(T)=\leftembeddingdimensionoper(S)+1<\frac{\multiplicityoper(S)}{\kappa} -1 +1 = \frac{\multiplicityoper(S)}{\kappa}= \frac{\multiplicityoper(T)}{\kappa}\). Thus \(S\) is not \(\prop D{\kappa}\)-cutting.
	\end{proof}
	\noindent\begin{figure}[h]
		\begin{subfigure}[b]{0.063\textwidth}
			\includegraphics[width=\textwidth]{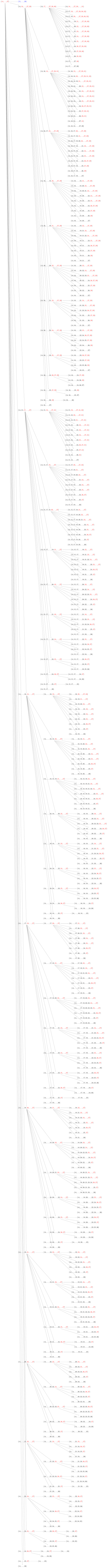}
			\caption{{\(\mathbf{T}_{14}\), \(3\) levels}}
		\end{subfigure}
		\begin{subfigure}[b]{0.85\textwidth}
			\includegraphics[width=\textwidth]{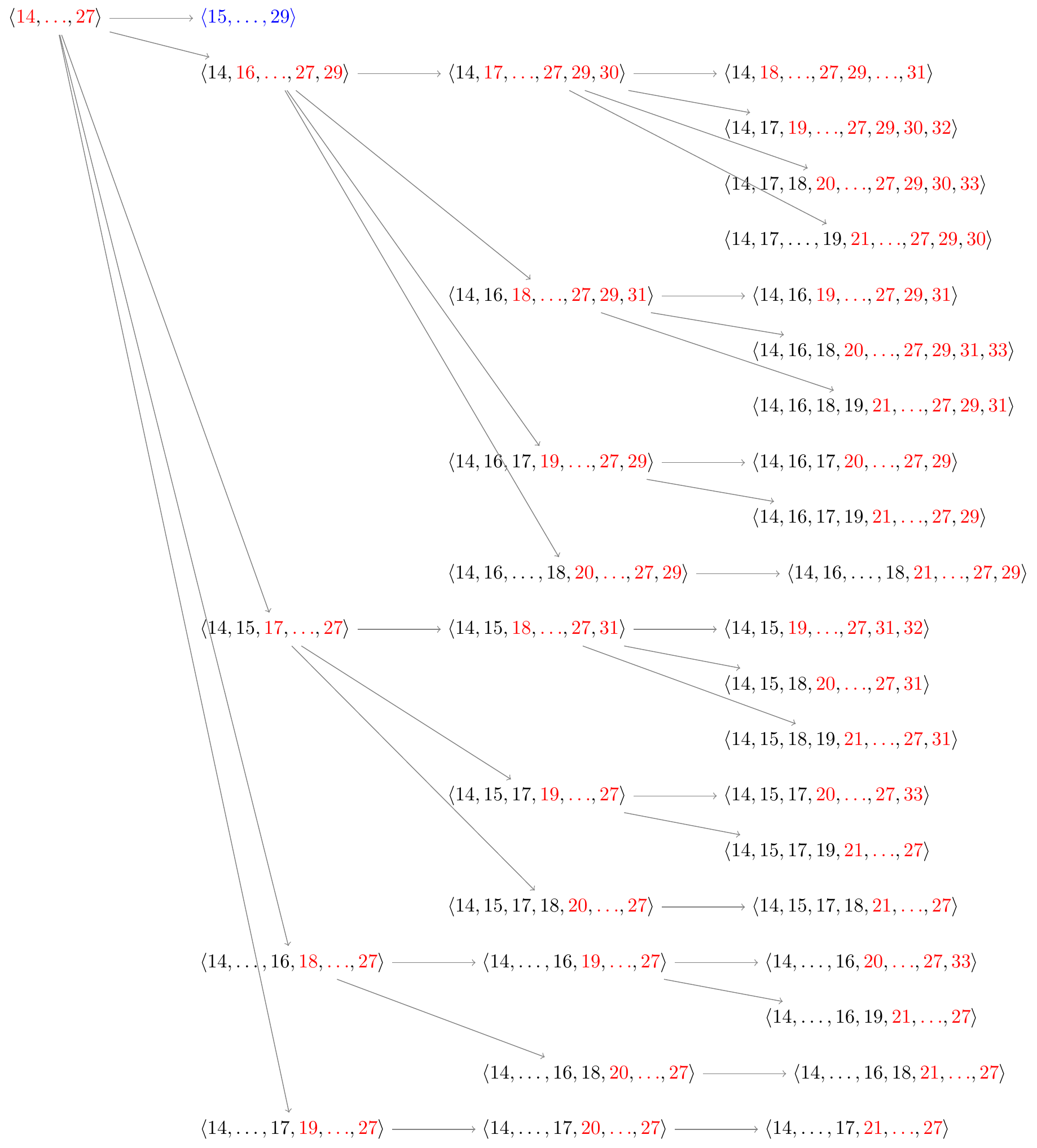}
			\caption{{corresponding trimmed tree}}
		\end{subfigure}
		\caption{Some levels of the trimmed tree rooted \(\mathcal{O}_{14}\). \label{fig:dtrimmed_tree}}
	\end{figure}

	The reader has probably already gained the intuition that trimming for \(\prop D{\kappa}\) is quite efficient in the sense that many semigroups are cut. Although the most probable existence of few non-\(\prop D{3}\) semigroups (as suggested in Section~\ref{subsec:asymptotic_a_result_and_a_problem}) does not lead to infer immediately that many semigroups are cut when trimming the tree, we believe that this intuition is correct.
	Figure~\ref{fig:dtrimmed_tree} helps to have a pictorial indication on this direction. The figure consists of two trees (with the roots drawn to the right). The leftmost shows the (shape of the) first three levels of the tree rooted at \(\mathcal{O}_{14}\) (and considering only the semigroups with multiplicity \(14\), as usual); the picture to the right is the corresponding \(\prop D{3}\)-trimmed tree.
 	
 	The difference in the efficiency occurs in two ways: one is in the sense that many semigroups are cut; the other one is that, contrary to what happens in the two previous sections (where the trimming depends on computing genus or conductors of numerical semigroups), it is easy to determine if a numerical semigroup is \(\prop D{\kappa}\): one just has to compute the cardinality of a known finite set. Figure~\ref{fig:dtrimmed_tree} helps to have a pictorial indication that this intuition is correct. To the left there are drawn (from left to right) the first three levels of the tree rooted at \(\mathcal{O}_{14}\) (and considering only the semigroups with multiplicity \(14\), as usual); the picture to the right is the corresponding \(\prop D{3}\)-trimmed tree.


\section{Trimming the truncated tree}\label{sec:trimming_truncated}
Let \(\Gamma\) be a positive integer, which we can think of as the maximum genus to be considered.
We want to explore a search space of the form \(\mathcal{S}_{\le \Gamma}\), that is, as referred in Section~\ref{subsec:a_search_space}, a finite tree \(\mathbf{T}^{\Gamma}\) obtained from the numerical semigroups tree by truncating at level \(\Gamma\) (which amounts to not considering numerical semigroups with genus greater than \(\Gamma\)). 

The search space \(\mathcal{S}_{\le \Gamma}\) is the one used when counting semigroups by genus. When some property has to be tested one considers as initial search spaces the trimmed trees obtained as in the previous section and then truncate them  at level \(\Gamma\). The notion of cutting semigroup is then adapted to this search space.

Explaining how considering a (trimmed) truncated tree rather than the whole tree can lead to further trimming is the goal of this section.

\subsection{Contribution of truncated trees rooted at superficial semigroups}\label{sec:paralelization}

	In practice, if one wants to explore the classical tree of numerical semigroups, one can run the tree exploring process in parallel.

	Assume that, having counting in mind, one wants to explore the tree up to genus \(\Gamma\), that is, one wants to explore the finite tree \(\mathbf{T}^{\Gamma}\) and count the number of nodes explored. By the construction of the classical tree of numerical semigroups made in Section~\ref{subsec:the_tree}, the subtrees rooted by distinct superficial semigroups have no common nodes, thus one can parallelize the process of exploring the tree by launching one process for each  \(\mathbf{T}_m\). As the root \(\mathcal{O}_m\) of \(\mathbf{T}_m\) has genus \(m-1\), and all the other semigroups in this tree have bigger genus, we may assume the \(m\) is not greater than \(\Gamma+1\).

	With some obvious adaptations, the above arguments on parallelization hold if one wants to test a given property up to a certain genus -- in that case, one should consider trimmed trees instead. 

	We can concentrate on exploring the trees \(\mathbf{T}_m\) (which consists, by construction, of the numerical semigroups of multiplicity \(m\)), and assume that \(m>1\).

	\medskip

	Denote by \(\mathbf{T}_{m}^{\Gamma}\) the finite tree obtained from \(\mathbf{T}_m\) by cutting the semigroups of genus greater than \(\Gamma\).
	The aim of the comments that follow is to help the reader gain some intuition on the shape of \(\mathbf{T}_m^{\Gamma}\).

	Recall that \(\mathbf{T}_m\) is the tree rooted \(\mathcal{O}_m\) whose nodes are the numerical semigroups of multiplicity~\(m\). 
	Since the number of children of \(\mathcal{O}_{m}\) with multiplicity \(m\) is \(m-1\), we have that the number of nodes in the second level of \(\mathbf{T}_m\) is \(m-1\). In particular, this number grows with \(m\). One can then consider the grandchildren and one expects that the number of nodes in the third level continues growing with \(m\), etc. Intuitively, the width of \(\mathbf{T}_m\) grows with \(m\), that is, the larger is the multiplicity, the fatter is the tree.
	The root of \(\mathbf{T}_m\), \(\mathcal{O}_m\), has genus \(m-1\).
	The paths in \(\mathbf{T}_m\) connecting the root and the semigroups of genus~\(\Gamma\) have length \(\Gamma-m\). These are the paths of maximum length in \(\mathbf{T}_{m}^{\Gamma}\).
	Thus, the larger is the multiplicity, the shorter are the paths of maximum length. Intuitively, the larger is the multiplicity, the shorter is \(\mathbf{T}_{m}^{\Gamma}\).
	\medskip

	The trees to be explored go, by letting \(m\) increase, from thin and tall to fat and short. A natural question is: ``What is the contribution of each subtree (that is, of each multiplicity) to the total number of semigroups of a given genus?''.

	Perhaps not surprisingly (after the intuition gained from the above comments), experiments (see~\cite{Delgado2019-Probing}) suggest that the multiplicities that contribute the most are around~\(2\Gamma/3\). Table~\ref{table:counting_by_mult_genus_55} shows the data for genus \(55\).

	\begin{table}[h]
		\begin{tabular}{ r | r| r | r| r| r}
			\hline
			mult.&	&	mult.&	&	mult.&	\\
			\hline
			&  & 19 & \num{825385467} & \num{37} & \num{98547818334}\\
			2 & \num{1} & \num{20} & \num{1435737224} & \num{38} & \num{101846002196}\\
			3 & \num{19} & \num{21} & \num{2076590341} & \num{39} & \num{99563735876}\\
			4 & \num{279} & \num{22} & \num{3307416867} & \num{40} & \num{90170163159}\\
			5 & \num{1522} & \num{23} & \num{4492728653} & \num{41} & \num{73991259070}\\
			6 & \num{11270} & \num{24} & \num{6938843096} & \num{42} & \num{53913734559}\\
			7 & \num{38130} & \num{25} & \num{8894089045} & \num{43} & \num{34318906507}\\
			8 & \num{178158} & \num{26} & \num{12972569483} & \num{44} & \num{18847941433}\\
			9 & \num{497983} & \num{27} & \num{16481758425} & \num{45} & \num{8845246327}\\
			10 & \num{1634613} & \num{28} & \num{22876937724} & \num{46} & \num{3518678913}\\
			11 & \num{3688088} & \num{29} & \num{28376718345} & \num{47} & \num{1177467268}\\
			12 & \num{10656728} & \num{30} & \num{36956478672} & \num{48} & \num{328691328}\\
			13 & \num{20165257} & \num{31} & \num{44716183827} & \num{49} & \num{75769934}\\
			14 & \num{47685160} & \num{32} & \num{54150283092} & \num{50} & \num{14235094}\\
			15 & \num{86795326} & \num{33} & \num{63468305144} & \num{51} & \num{2140914}\\
			16 & \num{177796219} & \num{34} & \num{73270567326} & \num{52} & \num{251279}\\
			17 & \num{286351790} & \num{35} & \num{82874331072} & \num{53} & \num{22154}\\
			18 & \num{549728887} & \num{36} & \num{91678517847} & \num{54} & \num{1379}\\
			&  &  &  & 55 & 54\\
			&  &  &  & 56 & 1\\
			\hline
			Sum & \num{1185229430} & Sum & \num{555793441650} & Sum & \num{585162065779}\\
			\hline
			
	    \end{tabular}
	    \caption{Number of numerical semigroups with genus \(55\) counted by multiplicity \label{table:counting_by_mult_genus_55}}
	\end{table}	
	
	From this table we made Figure~\ref{fig:counting_by_mult_genus_55}, which conveniently illustrates what we said.
	
	\begin{figure}[h]
			\includegraphics[width=1\textwidth]{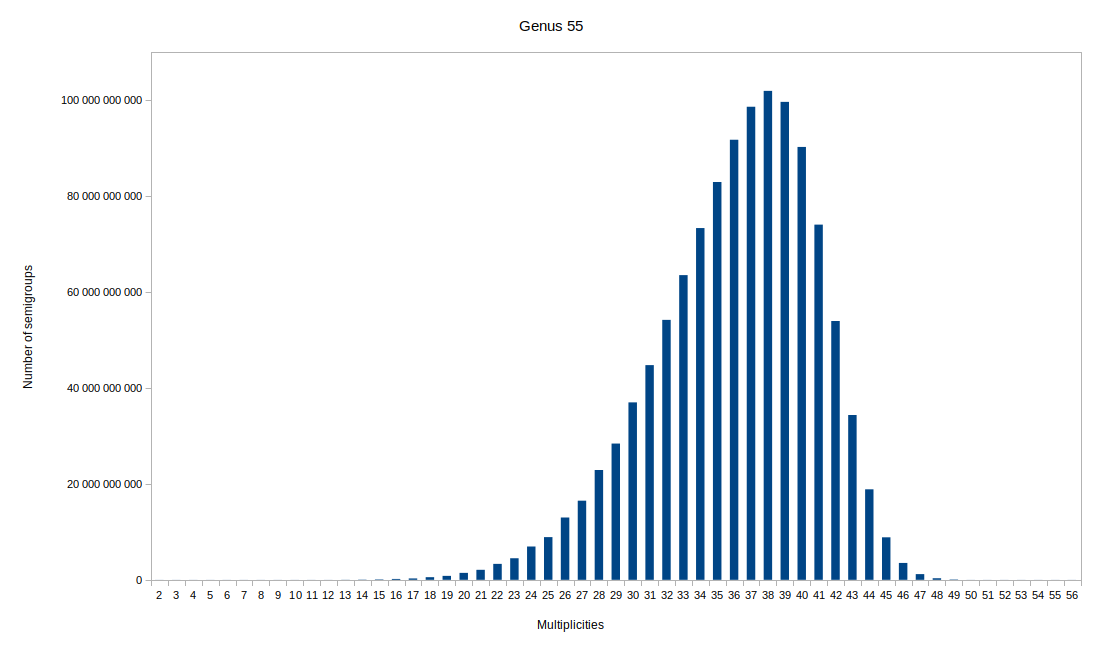}
	\caption{Counting by multiplicity the semigroups of genus  \(55\)}
	\label{fig:counting_by_mult_genus_55}
	\end{figure}

\subsection{Counting by multiplicity and genus}\label{subsec:kaplan}

	Denote by \(N (m, g)\) the number of numerical semigroups with multiplicity \(m\) and genus~\(g\).

	Kaplan proved that when \(m\) and \(g\) are such that \(2g < 3m\) a Fibonacci-like recurrence holds:
	\begin{theorem}[{\cite[Th.~1]{Kaplan2012JPAA-Counting}}]\label{th:kaplan-2/3}
		Let \(m\) and \(g\) be positive integers satisfying \(2g < 3m\).  Then 
		\begin{equation}\label{eq:kaplan-fibonacci}
			N (m, g)=N(m-1, g-1) + N(m-1, g-2).
		\end{equation}
	\end{theorem}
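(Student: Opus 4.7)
The plan is to work with Kunz coordinates. Writing $\mathcal{S}_{m,g}$ for the set of numerical semigroups of multiplicity $m$ and genus $g$, each $S\in\mathcal{S}_{m,g}$ is encoded by a tuple $(k_1,\ldots,k_{m-1})\in\mathbb{Z}_{>0}^{m-1}$, where the Apéry element $w_i = k_i m + i$ is the smallest element of $S$ of residue $i$ modulo $m$. One has $g=\sum_{i=1}^{m-1} k_i$, and closure of $S$ under addition is equivalent to the Kunz inequalities $k_{i+j}\leq k_i+k_j$ for $i+j\leq m-1$, and $k_{i+j-m}\leq k_i+k_j+1$ for $i+j\geq m+1$. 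The target of the recurrence suggests the truncation map $\phi\colon(k_1,\ldots,k_{m-1})\mapsto(k_1,\ldots,k_{m-2})$, partitioned by the value of $k_{m-1}$.

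By Remark~\ref{rem:bound_for_genus_in_terms_of_conductor}, the hypothesis $2g<3m$ yields $\conductoroper(S)\leq 2g\leq 3m-1$, so $S$ has depth at most $3$, i.e.\ $k_i\in\{1,2,3\}$ for every $i$. Moreover $k_{m-1}\leq 2$, because $3m-1\in S$ forces $w_{m-1}\leq 3m-1$. So $k_{m-1}\in\{1,2\}$, which matches the two-term recurrence: $k_{m-1}=1$ yields a truncated tuple of sum $g-1$, and $k_{m-1}=2$ yields one of sum $g-2$. The same estimate shows that any $T\in\mathcal{S}_{m-1,g-1}\cup\mathcal{S}_{m-1,g-2}$ satisfies $\conductoroper(T)\leq 2(g-1)\leq 3(m-1)$, hence its Kunz coordinates at modulus $m-1$ are likewise confined to $\{1,2,3\}$.

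I would then verify that $\phi$ and its inverse (which appends $k_{m-1}\in\{1,2\}$ according to which summand of the recurrence the image belongs to) are well-defined. The Kunz type-A inequalities at modulus $m-1$ with $i+j\leq m-2$ coincide with those at modulus $m$, so they are directly inherited. The type-B inequalities at modulus $m-1$ use the index $i+j-(m-1)$, which does not match the index $i+j-m$ appearing in the $m$-system; they therefore do not follow by formal inheritance, but they hold by a trivial estimate: the left-hand side is some $k_r\leq 3$ while the right-hand side is $\geq k_i+k_j+1\geq 3$. The same two-line estimate disposes of the additional $m$-inequalities involving the new coordinate $k_{m-1}$ (and the type-A inequality $k_{m-1}\leq k_i+k_j$ for $i+j=m-1$) that arise when inverting $\phi$.

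The main obstacle is exactly this mismatch between the Kunz systems at moduli $m$ and $m-1$: there is no algebraic identity that transports one set of inequalities to the other, so the bijection is only an ``almost-trivial'' map plus a bounded numerical check. The hypothesis $2g<3m$ is what rescues the argument, by forcing every Kunz coordinate of every semigroup in sight into $\{1,2,3\}$ and thereby trivializing all the problematic type-B inequalities at both moduli simultaneously. Without this bound, the case $k_{m-1}=3$ would contribute a third summand $N(m-1,g-3)$, which is precisely why the specific condition $2g<3m$ (and not a weaker bound) appears in the statement.
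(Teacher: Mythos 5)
The paper offers no proof of this statement---it is quoted directly from Kaplan's article---so there is nothing internal to compare against; judged on its own terms, your Kunz-coordinate argument is correct and complete. The two numerical facts everything hinges on both check out: from \(2g<3m\) one gets \(\conductoroper(S)\le 2g\le 3m-1\), hence \(3m-1\in S\), so \(k_{m-1}\le 2\) while \(k_i\le 3\) for all \(i\); and any \(T\) of multiplicity \(m-1\) and genus at most \(g-1\) satisfies \(\conductoroper(T)\le 2g-2\le 3(m-1)\), so its Kunz coordinates are likewise at most \(3\). Given those bounds, your observation that every ``mismatched'' inequality---the type-B inequalities at either modulus, and the type-A inequality \(k_{m-1}\le k_i+k_j\) with \(i+j=m-1\)---reduces to the trivial estimate \(3\le 1+1+1\) is exactly what makes truncation and extension mutually inverse bijections between \(\mathcal{S}_{m,g}\) and \(\mathcal{S}_{m-1,g-1}\sqcup\mathcal{S}_{m-1,g-2}\); it is also worth noting, as a final sanity check, that a left-hand index equal to \(m-1\) never occurs in a type-B inequality at modulus \(m\), since \(i+j\le 2m-2\) forces \(i+j-m\le m-2\), so the appended coordinate only ever enters on the harmless side of those constraints. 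Your closing diagnosis of why \(2g<3m\) cannot be relaxed is consistent with the numerical counterexample the paper records right after the statement (\(N(36,54)\ne N(35,53)+N(35,52)\)).
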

	
	In~\cite{Delgado2019-Probing} one finds a collection of numbers of the form \(N(m,g)\). We can use some to observe that the inequality \(2g < 3m\) in the statement of Theorem~\ref{th:kaplan-2/3} is not replaceable by the non-strict corresponding inequality, which shows that one can not expect a better result in this direction.
	Take \(g=54\) and \(m=2\times 54/3=36\). We have
	\[\begin{array}{lclcl}
	N(m,g)&=&N(36,54)&=&\num{59983372281}\\
	N(m-1, g-1)&=&N(35,53)&=&\num{36397157167}\\
	N(m-1, g-2)&=&N(35,52)&=&\num{23586084042}
	\end{array}\]

	It follows that \(N(35,53)+N(365,52)=\num{59983241209}\ne \num{59983372281}=N(36,54)\).

	\medskip

	Let us now see how to take advantage of Kaplan's result in the process of counting numerical semigroups of a given genus.

	Observe that to compute the number of numerical semigroups of a certain genus \(\Gamma\) by exploring the numerical semigroups tree, all the numerical semigroups of genus lesser than \(\Gamma\) are also explored. The same happens when only numerical semigroups with some fixed multiplicity are considered (which corresponds to explore some \(\mathbf{T}_m\)). Thus, counting the numerical semigroups (with some fixed multiplicity) of genus \(\Gamma\) or up to that same genus requires roughly the same computational effort.

	Suppose that our goal is to compute the number \(N(\Gamma)\) of numerical semigroups of genus~\(\Gamma\), for some positive integer \(\Gamma\).

	Note that if \(m\ge g\), then \(N (m, g)=0\), since for a numerical semigroup \(S\), \(\genusoper(S)\ge \multiplicityoper(S)-1\).
	It follows that \(N(\Gamma)=\sum_{m\ge 2} N(m,\Gamma)=\sum_{2\le m\le \gamma+1} N(m,\Gamma)\). We will split the computation of \(N(\Gamma)\) in two parts: 
	\[\begin{array}{lcl}
		\Sigma_1 & = & \sum_{2\le m \le 2\Gamma/3} N(m,\Gamma)\\
		\Sigma_2 & = & \sum_{2\Gamma/3 < m\le \Gamma+1} N(m,\Gamma)
	\end{array}
	\]

	First we compute \(N(m,g)\) for all integers \(m\) and \(g\) with \(2\le m\le 2\Gamma/3\) and \(1\le g\le \Gamma\). This is a time consuming computation, but, as observed above, can be performed in roughly the same time than just computing \(N(m,\Gamma)\) for the various \(m\). Next we compute \(\Sigma_1=\sum_{2\le m\le 36} N(m,\Gamma)\). Assume that it is done.
		
	Next we will make use of Theorem~\ref{th:kaplan-2/3} to compute \(\Sigma_2\). As \(m > 2\Gamma/3\), we can compute the numbers \(N(\lceil 2\Gamma/3\rceil,g)\), for \(1\le g\le \Gamma\), by computing the sums \(N(\lceil 2\Gamma/3\rceil-1,g-1)+N(\lceil 2\Gamma/3\rceil-1,g-2)\), for each \(g\). (Observe that it is really needed to compute \(N(\lceil 2\Gamma/3\rceil,g)\), for \(1\le g\le \Gamma\), and not just compute \(N(\lceil 2\Gamma/3\rceil,\Gamma)\). For instance, one needs to know \(N(\lceil 2\Gamma/3\rceil,g-1)\) and \(N(\lceil 2\Gamma/3\rceil,g-2)\) to compute \(N(\lceil 2\Gamma/3\rceil+1,g)\).)
	
	Thus, computing \(\Sigma_2\) amounts to compute a number of sums, which is achieved in a negligible amount of time. But, contrary to the time required to compute it, the parcel \(\Sigma_2\) is not negligible in general. That is clearly the case for \(\Gamma=55\):
    
    \begin{example}
    	Let \(\Gamma=55\). Note that \(\lceil 2 \Gamma /3\rceil = 37\) and use the notation \(\Sigma_1\) and \(\Sigma_2\) as above.
		From Table~\ref{table:counting_by_mult_genus_55} we see that \(\Sigma_2= \num{585 162 065 779}\), the last entry of the bottom line of the table. It exceeds the number \(\Sigma_1= \num{556978671080}\), obtained by summing up the two remaining numbers of the same line.
	\end{example}

	The choice of the number \(55\) for the above example is due to the fact that it is the biggest number for which we presently have complete data. And this data suggests that, also for genera bigger than \(55\), about half of the numerical semigroups are counted without time consumption.
	
\subsubsection*{Concluding remark}
	Assume that one can count up to genus \(\Gamma\) in a certain prescribed time, without benefiting of the reduction provided by Theorem~\ref{th:kaplan-2/3}. With this reduction  one would be able to count up to genus \(\Gamma+1\) with a non greater computational effort. A naive justification follows: note that \(N(\Gamma+1)\sim 1.61 N(\Gamma)\), but only about half of the \(N(\Gamma+1)\) need to be explored. The remaining half do not need to be explored, and counting them on require some sums, which can be done with almost no time consumption.

\subsection{Counting non-generic semigroups}\label{subsec:counting_non_generic}

	Recall that generic numerical semigroups are those satisfying the property \(\prop H{\ell}\), with \(\ell=3\). Most arguments in this section hold when \(\ell\) is a positive integer.

	Recall that a numerical semigroup is \(\prop H{\ell}\)-cutting if neither itself nor any of its descendants satisfies \(\conductoroper>\ell\multiplicityoper\). Let us adapt this definition to the truncated tree: a numerical semigroup in the \(\prop H{\ell}\)-trimmed tree is \(\prop H{\ell}^{\Gamma}\)-cutting if none of its descendants with genus not greater than~\(\Gamma\) (that is, belonging to the \(\Gamma\)-truncated tree) satisfies \(\conductoroper>\ell\multiplicityoper\). In other words, a numerical semigroup \(S\) such that none of its descendants \(S'\) with \(\genusoper(S')\le \Gamma\) satisfies \(\conductoroper(S')>\ell\multiplicityoper(S)\) can be cut off.
	
	\begin{proposition}\label{prop:big_multiplicity_implies_genericity}
		Let \(S\) be a numerical semigroup such that \(\genusoper(S)\le \Gamma\). If \(\multiplicityoper(S)\ge 2\Gamma/{\ell}\), then \(\conductoroper(S)\le {\ell}\multiplicityoper(S)\).
	\end{proposition}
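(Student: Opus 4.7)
The plan is very short: this is essentially a direct chain of three inequalities, one from hypothesis, one from a well-known bound recalled in Remark~\ref{rem:bound_for_genus_in_terms_of_conductor}, and one from the hypothesis on the multiplicity.

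First I would invoke Remark~\ref{rem:bound_for_genus_in_terms_of_conductor}, which states that $\conductoroper(S)/2 \le \genusoper(S)$ for any numerical semigroup $S$. Doubling both sides gives $\conductoroper(S) \le 2\genusoper(S)$. Then, using the hypothesis $\genusoper(S) \le \Gamma$, I would obtain $\conductoroper(S) \le 2\Gamma$. Finally, the hypothesis $\multiplicityoper(S)\ge 2\Gamma/\ell$ is equivalent to $2\Gamma \le \ell\multiplicityoper(S)$, so chaining yields $\conductoroper(S) \le 2\Gamma \le \ell\multiplicityoper(S)$, which is the desired conclusion.

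There is no real obstacle here: the statement is a straightforward combination of a classical bound (Sylvester-type: $\conductoroper \le 2\genusoper$, already recalled in the paper) with the two numerical hypotheses on $\genusoper(S)$ and $\multiplicityoper(S)$. The only thing worth pointing out is that the proposition does not require $\ell$ to be the specific value $3$ used to define generic semigroups; the argument works verbatim for any positive real $\ell$, which is consistent with how \(\prop H{\ell}\) was introduced in Definition~\ref{def:property_H}. Thus the whole proof fits in a single display of inequalities.
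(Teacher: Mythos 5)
Your proof is correct and follows exactly the same chain of inequalities as the paper's own argument: Remark~\ref{rem:bound_for_genus_in_terms_of_conductor} gives \(\conductoroper(S)\le 2\genusoper(S)\le 2\Gamma\), and the multiplicity hypothesis gives \(2\Gamma\le\ell\multiplicityoper(S)\). Nothing to add.
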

	\begin{proof}
		By Remark~\ref{rem:bound_for_genus_in_terms_of_conductor} we have that \(\conductoroper(S)\le 2\genusoper(S)\). As, by hypothesis, we have that \(\genusoper(S)\le \Gamma\) we conclude that \(\conductoroper(S)\le 2\Gamma\). It follows that \(\multiplicityoper(S)\ge 2\Gamma/{\ell}\ge \conductoroper(S)/{\ell}\), and therefore \(\conductoroper(S)\le {\ell}\multiplicityoper(S)\).
	\end{proof}
	Taking \(\ell=3\), we get the following:
	\begin{corollary}\label{cor:big_multiplicity_implies_genericity}
		If \(S\) is a non-generic semigroup such that \(\genusoper(S)\le \Gamma\), then \(\multiplicityoper(S)\ge 2\Gamma/{3}\).
	\end{corollary}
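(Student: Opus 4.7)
The approach is to specialize Proposition~\ref{prop:big_multiplicity_implies_genericity} to $\ell=3$ and pass to its contrapositive. Instantiating the proposition at $\ell=3$ reads: if $\genusoper(S)\le\Gamma$ and $\multiplicityoper(S)\ge 2\Gamma/3$, then $\conductoroper(S)\le 3\multiplicityoper(S)$, i.e.\ $S$ is generic. Under the hypothesis $\genusoper(S)\le\Gamma$, the negation of ``$S$ is generic'' is ``$S$ is non-generic'', and the contrapositive therefore forces the negation of ``$\multiplicityoper(S)\ge 2\Gamma/3$''. No further input is needed beyond Remark~\ref{rem:bound_for_genus_in_terms_of_conductor} and the defining condition of non-genericity.

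To make this explicit and self-contained, I would chain the same short string of estimates used inside the proof of the proposition. By Remark~\ref{rem:bound_for_genus_in_terms_of_conductor} together with the hypothesis $\genusoper(S)\le\Gamma$, we obtain $\conductoroper(S)\le 2\genusoper(S)\le 2\Gamma$. Non-genericity is precisely $\conductoroper(S)>3\multiplicityoper(S)$, so combining these gives $3\multiplicityoper(S)<\conductoroper(S)\le 2\Gamma$, which yields a strict bound on $\multiplicityoper(S)$ in terms of $\Gamma$. This is a one-line deduction once Proposition~\ref{prop:big_multiplicity_implies_genericity} is in hand; there is no genuinely difficult step.

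The main obstacle is reconciling this with the direction of the inequality in the statement as worded. The contrapositive produces the strict upper bound $\multiplicityoper(S)<2\Gamma/3$, which is exactly the inequality needed in Section~\ref{subsec:counting_non_generic} to prune subtrees $\mathbf{T}_m$ with $m\ge 2\Gamma/3$ when searching for non-generic semigroups of genus at most $\Gamma$. The printed conclusion $\multiplicityoper(S)\ge 2\Gamma/3$ cannot be derived from the hypotheses: taking $S=\langle 3,7\rangle$, with $\multiplicityoper(S)=3$, $\conductoroper(S)=12$, and $\genusoper(S)=6$, gives a non-generic semigroup for which $\multiplicityoper(S)=3<4=2\Gamma/3$ when $\Gamma=6$. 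I would therefore treat the $\ge$ sign as a typographic slip for $<$ and present the proof of the strict upper bound obtained above, since that is the conclusion the surrounding discussion actually uses.
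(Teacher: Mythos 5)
Your proposal is correct and follows exactly the paper's route: the paper obtains this corollary by simply setting \(\ell=3\) in Proposition~\ref{prop:big_multiplicity_implies_genericity} and (implicitly) passing to the contrapositive, which is precisely what you do. You are also right that the inequality in the printed statement is reversed: the contrapositive yields \(\multiplicityoper(S)<2\Gamma/3\), your counterexample \(S=\langle 3,7\rangle\) (non-generic, \(\genusoper(S)=6\), \(\multiplicityoper(S)=3<4\)) refutes the literal ``\(\ge\)'' version, and the surrounding text (``we do not need to consider multiplicities greater than \(2\Gamma/3\)'') confirms that the intended conclusion is the upper bound you prove.
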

	
	So, if we are interested in counting non-generic semigroups up to a given genus, we can parallelize by multiplicity as above and, again, we do not need to consider multiplicities greater than \(2\Gamma/{3}\).

\subsubsection{Location of generic / non-generic semigroups}

	Let \(\mathcal O_{\multiplicityoper}\) be the superficial semigroup with multiplicity \(\multiplicityoper\). Let \(S\) be a descendant of \(\mathcal O_{\multiplicityoper}\) with the same multiplicity, that is, \(\multiplicityoper(S)=\multiplicityoper\). As usual, write, respectively, \(\genusoper\) and \(\conductoroper\) for the genus and the conductor of \(S\).
	Recall from Remark~\ref{rem:bound_for_genus_in_terms_of_conductor} that \(\conductoroper/2\le\genusoper\).
	Note that if \(\multiplicityoper\ge 2\), then \(\genusoper\le \conductoroper-1\).
	The following remark says, in particular, that the descendants of \(\mathcal O_{\multiplicityoper}\) that are not known \emph{a priori} to be generic or not are those in the strip of semigroups of genus in the interval \(]3\multiplicityoper/2,3\multiplicityoper[\) (that is, the strip between level \(3\multiplicityoper/2\) and \(3\multiplicityoper\).)
	\begin{remark}\label{rem:generic_and_non-generic_strip}
		In the tree rooted by the superficial semigroup of multiplicity \(\multiplicityoper\ge 2\), all the semigroups up to genus \(3\multiplicityoper/2\) are generic, and all the semigroups with genus non smaller than \(3\multiplicityoper\) are non generic. 
	\end{remark}
	\begin{proof}
		Assume that \(\genusoper \le 3\multiplicityoper/2\). Then \(\conductoroper/2\le \genusoper \le 3\multiplicityoper/2\), which implies that \(\conductoroper\le 3\multiplicityoper\), and therefore \(S\) is generic. If \(\genusoper\ge 3\multiplicityoper\), then \(3\multiplicityoper\le \genusoper < \conductoroper\), thus \(S\) is non generic.
	\end{proof}

	It follows from Remark~\ref{rem:generic_and_non-generic_strip} that semigroups with multiplicity no smaller than \(2/3 \Gamma\) and genus up to \(\Gamma\) are generic. Consequently they are Eliahou and hence Wilf.

\subsubsection*{Concluding remarks}
	Exploring the tree of classical numerical semigroups with the aim of counting numerical semigroups, testing Wilf and finding non Eliahou examples up to genus \(\Gamma\) is reduced to the corresponding computations by multiplicity and genus up to multiplicity \(\lceil 2/3 G\rceil\) and genus \(\Gamma\).
\subsection{Counting semigroups of large density}\label{subsec:counting_large_edim}
	Recall that a numerical semigroup is \(\prop D{\kappa}\)-cutting if neither itself nor any of its descendants satisfies \(\embeddingdimensionoper<\multiplicityoper/\kappa\). As we did in the previous subsection, let us adapt this definition to the truncated tree: a numerical semigroup in the \(\prop D{\kappa}\)-trimmed tree is \(\prop D{\kappa}^{\Gamma}\)-cutting if none of its descendants with genus not greater than~\(\Gamma\) (that is, belonging to the \(\Gamma\)-truncated tree) satisfies \(\embeddingdimensionoper<\multiplicityoper/\kappa\). In other words, a numerical semigroup \(S\) such that none of its descendants \(S'\) with \(\genusoper(S')\le \Gamma\) satisfies \(\embeddingdimensionoper(S')<\multiplicityoper(S)/\kappa\) can be cut off.

	Let \(g\) be a positive integer and let \(e>2\) and \(j\) be positive integers with \(j\le e\le g+1\).
	As a consequence of Corollary~\ref{cor:removing_big_primitive_edim_of_descendant} one has the following:
	\begin{lemma}\label{lemma:edim_of_descendant}
		Let \(g,e,j\) be positive integers such that there exists a numerical semigroup \(S\) with genus \(g\), embedding dimension \(e\) and a descendant with genus \(g+j\). Let \(S'\) be such a descendant of \(S\) of genus \(g+j\). Then \(\embeddingdimensionoper(S')\ge e-j\). Furthermore, every semigroup \(T\ne S'\) in the path from \(S\) to \(S'\) has embedding dimension bigger than \(e-j\).
	\end{lemma}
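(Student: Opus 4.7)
The plan is to apply Corollary~\ref{cor:removing_big_primitive_edim_of_descendant} repeatedly along the unique path in the classical tree from $S$ down to the descendant $S'$. First I would observe that, along any descent in the numerical semigroups tree, moving from a parent to a child increases the genus by exactly one, since a child is obtained by removing one (necessarily big) primitive. Consequently the path from $S$ to $S'$ can be written as a chain
\[
S = S_0 \succ S_1 \succ \cdots \succ S_j = S',
\]
where $\genusoper(S_i) = g + i$ and each $S_i$ is a child of $S_{i-1}$.

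Next I would apply Corollary~\ref{cor:removing_big_primitive_edim_of_descendant} at each step of this chain. The corollary tells us that $\embeddingdimensionoper(S_i) \in \{\embeddingdimensionoper(S_{i-1}),\, \embeddingdimensionoper(S_{i-1}) - 1\}$, so by a trivial induction on $i$ we obtain $\embeddingdimensionoper(S_i) \ge \embeddingdimensionoper(S) - i = e - i$. Taking $i = j$ gives the desired inequality $\embeddingdimensionoper(S') \ge e - j$.

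For the \emph{furthermore} part, I would consider an arbitrary $T \ne S'$ on the path, so that $T = S_i$ for some $0 \le i \le j-1$. By the same bound, $\embeddingdimensionoper(T) \ge e - i \ge e - (j-1) = e - j + 1 > e - j$, which is exactly the strict inequality claimed. There is no real obstacle here: the content of the lemma is a direct bookkeeping consequence of the ``at most one'' drop recorded in Corollary~\ref{cor:removing_big_primitive_edim_of_descendant}, and the only thing to be careful about is to correctly identify the length of the path between $S$ and $S'$ with the genus difference~$j$.
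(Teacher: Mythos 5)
Your proof is correct and is exactly the argument the paper intends: the paper simply states the lemma as a consequence of Corollary~\ref{cor:removing_big_primitive_edim_of_descendant}, and your chain $S=S_0\succ\cdots\succ S_j=S'$ with the ``drops by at most one per level'' induction is the obvious way to fill that in. The identification of the path length with the genus difference $j$ and the strict inequality for intermediate nodes are both handled correctly.
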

	
	Remark~\ref{rem:arbitrary_length_sequence_decreasing_edim}) suggests that 
	the integer \(j\) in the previous statement can be arbitrarily large (which forces \(e\) and \(g\) to be even larger). 

	\begin{proposition}\label{prop:cut_superficial_Dk}
		If \(S\) is such that \(\multiplicityoper(S)\ge \frac{\kappa}{2\kappa-1}(\Gamma+1)\), then \(\mathcal{O}_m\) is \(\prop D{\kappa}^{\Gamma}\)-cutting.
	\end{proposition}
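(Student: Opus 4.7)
The plan is to exploit Corollary~\ref{cor:removing_big_primitive_edim_of_descendant}, which says that along a single edge of the classical tree the embedding dimension either stays the same or drops by exactly one, together with the fact that levels in the tree correspond to genus.

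First I would observe that since $\mathcal{O}_m$ is superficial, $\embeddingdimensionoper(\mathcal{O}_m)=\multiplicityoper(\mathcal{O}_m)=m$ and $\genusoper(\mathcal{O}_m)=m-1$. Let $S'$ be any descendant of $\mathcal{O}_m$ lying in the truncated tree, that is, with $\genusoper(S')\le \Gamma$. By Section~\ref{subsec:the_tree}, the unique path from $\mathcal{O}_m$ down to $S'$ has length exactly $\genusoper(S')-\genusoper(\mathcal{O}_m) = \genusoper(S')-(m-1)$, since each child increases the genus by one. Applying Corollary~\ref{cor:removing_big_primitive_edim_of_descendant} iteratively along this path gives
\[
 \embeddingdimensionoper(S')\;\ge\;\embeddingdimensionoper(\mathcal{O}_m) - \bigl(\genusoper(S')-(m-1)\bigr) \;=\; 2m - \genusoper(S') - 1 \;\ge\; 2m-\Gamma-1,
\]
where in the last step I use the hypothesis $\genusoper(S')\le\Gamma$.

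Next I would translate the hypothesis $m\ge \frac{\kappa}{2\kappa-1}(\Gamma+1)$. Clearing denominators (noting $2\kappa-1>0$ since $\kappa>1$), this is equivalent to $m(2\kappa-1)\ge \kappa(\Gamma+1)$, and after dividing by $\kappa$ and rearranging, to
\[
 2m-\Gamma-1 \;\ge\; \frac{m}{\kappa}.
\]
Combining with the previous display and using that descendants of $\mathcal{O}_m$ in $\mathbf{T}_m$ have multiplicity $\multiplicityoper(S')=m$ yields $\embeddingdimensionoper(S')\ge \multiplicityoper(S')/\kappa$, i.e.\ $S'\models\prop D{\kappa}$. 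The case $S'=\mathcal{O}_m$ is automatic since $\embeddingdimensionoper(\mathcal{O}_m)=m\ge m/\kappa$. Hence every descendant of $\mathcal{O}_m$ in the $\Gamma$-truncated tree satisfies $\prop D{\kappa}$, which by the definition recalled at the start of Section~\ref{subsec:counting_large_edim} makes $\mathcal{O}_m$ a $\prop D{\kappa}^{\Gamma}$-cutting semigroup.

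There is no real obstacle here: the argument is essentially a direct combination of Corollary~\ref{cor:removing_big_primitive_edim_of_descendant} (equivalently, Lemma~\ref{lemma:edim_of_descendant}) with Remark~\ref{rem:bound_for_genus_in_terms_of_conductor}-style bookkeeping of levels. The only point to be careful about is to write the statement for descendants in $\mathbf{T}_m$ (so that the multiplicity is preserved and equals $m$), and to verify that the hypothesized threshold on $m$ is exactly the value that makes the chain of inequalities tight; this is a short algebraic manipulation.
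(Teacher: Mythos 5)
Your proof is correct and follows essentially the same route as the paper's: both arguments use the fact that the embedding dimension drops by at most one per level (Corollary~\ref{cor:removing_big_primitive_edim_of_descendant} / Lemma~\ref{lemma:edim_of_descendant}) to bound \(\embeddingdimensionoper(S')\ge 2m-\Gamma-1\) for descendants in \(\mathbf{T}_m^{\Gamma}\), and then verify that the multiplicity hypothesis is exactly equivalent to \(2m-\Gamma-1\ge m/\kappa\). Your version is if anything slightly cleaner, since you apply the bound directly to an arbitrary descendant of genus at most \(\Gamma\) rather than passing through the genus-\(\Gamma\) case first.
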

	\begin{proof}
		Recall that \(\genusoper(\mathcal{O}_{\multiplicityoper})=\multiplicityoper-1\) and \(\embeddingdimensionoper(\mathcal{O}_m)=\multiplicityoper\). Let \(j=\Gamma-\multiplicityoper+1\), and let 
		\(S'\) be a descendant of \(\mathcal{O}_m\) of genus \(\Gamma=\multiplicityoper-1+j\). By Lemma~\ref{lemma:edim_of_descendant}, \(\embeddingdimensionoper(S')\ge\multiplicityoper-j\), that is,  \(\embeddingdimensionoper(S')\ge 2\multiplicityoper-\Gamma-1\). 
		It follows that if \(2\multiplicityoper-\Gamma-1\ge \multiplicityoper/\kappa\), than any descendant \(S\) of \(\mathcal{O}_m\) up to genus \(\Gamma\) satisfies \(\embeddingdimensionoper(S)\ge\embeddingdimensionoper(S')\ge \multiplicityoper/\kappa\).
		
		
		As \(2\multiplicityoper-\Gamma-1\ge \multiplicityoper/\kappa\) if and only if \((2\kappa-1)\multiplicityoper \ge \kappa(\Gamma+1)\), the result follows.
	\end{proof}

	By using Theorem~\ref{th:large-ed-are-wilf}, the maximum multiplicity that one needs to consider to test Wilf up to genus \(\Gamma=55\) is \(\multiplicityoper\ge 33 = \left\lceil\frac{3*56}{5}\right\rceil-1\). By considering Figure~\ref{fig:counting_by_mult_genus_55} we see that the proportion of numerical semigroups that do not need to be explored is high. The number of semigroups that do not need to be explored is even higher. In fact, there are no counter-examples to Wilf's conjecture among the semigroups of multiplicity not greater than \(18\), as recently proved by Bruns et al.:
	\begin{proposition}{\cite{BrunsGarcia-SanchezONeilWilburne2019ae-Wilfs}}\label{prop:m18_is_Wilf}
		Every numerical semigroup \(S\) such that \(\multiplicityoper(S)\le 18\) is Wilf. 
	\end{proposition}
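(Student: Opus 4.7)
The plan is to reduce Wilf's conjecture, for each fixed multiplicity \(m \in \{2,\ldots,18\}\), to a finite family of integer linear programs whose optimal values can be verified non-negative by computer. First I would invoke the two deep results of Eliahou already in hand to shrink the search space dramatically: by Theorem~\ref{th:large-ed-are-wilf} it suffices to handle semigroups with \(\embeddingdimensionoper(S) < m/3\), which for \(m \le 18\) forces \(\embeddingdimensionoper(S) \le 5\); by Theorem~\ref{th:large-mult-are-wilf} it suffices to handle semigroups with \(\conductoroper(S) > 3\multiplicityoper(S)\) (large depth). Together with Remark~\ref{rem:Silvester_implies_Wilf} disposing of \(\embeddingdimensionoper = 2\), this leaves only semigroups with \(\multiplicityoper(S) = m\), \(\embeddingdimensionoper(S) \in \{3,4,5\}\), and \(\conductoroper(S) > 3m\).

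The second step is to parameterize these semigroups via their Apéry set modulo \(m\). Writing \(\mathrm{Ap}(S,m) = \{0, w_1, \ldots, w_{m-1}\}\) with \(w_i\) the minimum element of \(S\) in the residue class \(i \pmod{m}\), and setting \(k_i = (w_i - i)/m \in \mathbb{Z}_{\ge 1}\), the Kunz coordinates \((k_1,\ldots,k_{m-1})\) range over the non-negative integer points of a rational polyhedron \(\mathcal{K}_m\) defined by the finitely many Kunz inequalities \(k_i + k_j \ge k_{i+j \bmod m} - \varepsilon_{ij}\) (with \(\varepsilon_{ij} \in \{0,1\}\) according to carry). Each of \(\conductoroper(S)\), \(|\leftsoper(S)|\), and the condition ``\(w_i\) is primitive'' translates into an explicit (piecewise) linear condition in the \(k_i\). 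In particular, stratifying \(\mathcal{K}_m\) by the subset \(\pi \subseteq \{1,\ldots,m-1\}\) of indices whose \(w_i\) is a non-multiplicity primitive, on each stratum the Wilf number \(\wilfoper(S) = \embeddingdimensionoper(S)\cdot|\leftsoper(S)| - \conductoroper(S)\) becomes linear in \((k_1,\ldots,k_{m-1})\).

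The third and computational step is, for each \(m \in \{2,\ldots,18\}\) and each admissible pattern \(\pi\) with \(|\pi| \le 4\), to solve the integer program of minimizing \(\wilfoper\) over the corresponding stratum of \(\mathcal{K}_m\) intersected with the constraints \(\max_i k_i \ge 3\) (large depth) and the primitivity relations specifying that every \(w_j\), \(j \notin \pi \cup \{0\}\), decomposes as a sum of two other Apéry elements. Certifying \(\min \wilfoper \ge 0\) on every stratum and every \(m \le 18\), and taking the union, proves the proposition.

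The main obstacle is the combinatorial explosion of strata and the need for a rigorous (not floating-point) certificate. With \(|\pi| \le 4\) and \(m \le 17\), the number of patterns is bounded by \(\sum_{j \le 4}\binom{17}{j}\), which is already in the thousands, and each pattern yields a polyhedron of dimension up to \(17\) with further ramification according to which Kunz inequalities are tight at the optimum. Making the verification trustworthy requires an exact-arithmetic integer programming solver, substantial symmetry reduction (e.g.\ exploiting the \(\mathrm{Aut}(\mathbb{Z}/m\mathbb{Z})\)-action on Kunz coordinates), and careful a priori elimination of trivially Wilf strata; it is precisely this engineering that separates the achievable record \(m = 18\) from the presently out-of-reach \(m = 19\).
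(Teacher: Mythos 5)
The paper does not prove this proposition at all: it is imported verbatim from Bruns, Garc\'{\i}a-S\'anchez, O'Neill and Wilburne, so there is no internal proof to compare against. What you have written is, in outline, a faithful reconstruction of the strategy of that cited reference: parametrize multiplicity-\(m\) semigroups by Kunz coordinates, observe that on each face of the Kunz cone the embedding dimension and the set of Ap\'ery elements that are primitive are constant while \(\conductoroper\), \(\lvert\leftsoper\rvert\) and hence \(\wilfoper\) are (affine) linear, and then certify non-negativity of \(\wilfoper\) face by face, after cutting the problem down with Theorem~\ref{th:large-ed-are-wilf}, Theorem~\ref{th:large-mult-are-wilf} and the \(\mathrm{Aut}(\mathbb{Z}/m\mathbb{Z})\)-symmetry. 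Two small technical points: with your normalization \(k_i=(w_i-i)/m\ge 1\), the non-generic condition \(\conductoroper>3\multiplicityoper\) is \(\max_i k_i\ge 4\), not \(\ge 3\) (harmless, since it only enlarges the region you check); and the conductor is a maximum of the affine forms \(k_im+i-m+1\), so linearity of \(\wilfoper\) requires the additional stratification by the argmax, which you do flag.

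The genuine gap is that the entire mathematical content of the statement \emph{is} the completed computation: your argument correctly reduces the proposition to a finite (but astronomically large) family of exact integer feasibility/optimization certificates, and then asserts rather than exhibits them. Without actually running and certifying that computation --- which for \(m=18\) required the cited authors to enumerate faces of the Kunz cone on supercomputing resources --- the proposal establishes only that the proposition is decidable by this method, not that it is true. So the write-up should be read as a correct proof \emph{scheme} matching the external source, with the verification step outsourced exactly as the paper itself outsources it via the citation.
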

	As a consequence, for any \(\Gamma\), the \(\mathbf{T}_{\multiplicityoper}\) with \(\multiplicityoper\le 18\) do not need to be explored. 
	For a given genus, this result does not lead to a big reduction (as suggested by the shape shown in Figure~\ref{fig:counting_by_mult_genus_55}) of the computations needed. But it can be used for every genus, which is great.
	It is used in the following example. Proposition~\ref{prop:cut_superficial_Dk} is used too.

	\begin{example}\label{example:testing-Wilf-reduction}
		Suppose that \(\Gamma=100\). All the \(\mathcal{O}_{\multiplicityoper}\) with \(\multiplicityoper\ge 61 = \left\lceil\frac{3*101}{5}\right\rceil\) can be cut off.
		To test Wilf one only has to consider the \(\mathbf{T}_m\) with \(19\le m\le 60\).
	\end{example}

\subsection{Further trimming}\label{subsec:further_trimming}
	The above ideas can be extended to trim inner nodes. We limit the discussion here to a simple, but important, case: that of trimming the tree for the property \(\prop{D}{}\). 
	
	For any integer \(m>2\), \(\mathbf{T}_m\) can be thought as the disjoint union of some of its subtrees, in the same way we saw the classical tree of numerical semigroups as the disjoint union of subtrees rooted by superficial semigroups. Recall that this is convenient for exploring in parallel. We then observe that, for a fixed \(\Gamma\), some of the subtrees consist of \(\prop{D}{\kappa}^{\Gamma}\)-cutting semigroups and therefore do not need to be explored. 
	\medskip
	
	Let \(m\ge 2\) be an integer. In practice, \(m\) is one of the multiplicities to be considered for computations, following Section~\ref{subsec:counting_large_edim}.
	In addition, the associated tree consists of a large number of semigroups (see Section~\ref{sec:paralelization}).

	To a nonnegative integer \(i\) associate the set \(I_i\) obtained from the interval of \(m\) consecutive integers starting in \(m+i+1\) by removing the multiple of \(m\) in that interval, that is, \(I_i=\{x\in \mathbb{N}\cap[m+i+1,2m+i]\mid x\not\equiv 0 \pmod m\}\). Then, consider the numerical semigroup \(S_i=\langle\{m\}\cup I_i\rangle\). Note that \(S_0\) is just the superficial semigroup with multiplicity~\(m\). 
	Next we state a simple remark.
	\begin{remark} Let \(i\ge 1\).
		The smallest big primitive of \(S_i\) is \(m+i+1\). Furthermore,
		\(S_{i+1}=S_i\setminus \{m+i+1\}\) and \(\genusoper(S_i)=m-1+i\).
	\end{remark}

	As a consequence, each semigroup \(S_i\) (\(i\ge 1\)) in the path \(S_0,S_1,\ldots, S_k\) is obtained from the preceding semigroup by removing the leftmost big primitive (which happens to be the smallest primitive besides the multiplicity).  
	Also, if \(i>\Gamma-m+1\), then \(S_i\) does not belong to \(\mathbf{T}^{\Gamma}\).
	\begin{lemma}\label{lemma:edim-Si}
		All the \(S_i\) have embedding dimension \(m\).
	\end{lemma}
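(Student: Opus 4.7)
The plan is to show, for every admissible $i$, that the generating set $\{m\}\cup I_i$ for $S_i$ is minimal, i.e.\ that every one of its elements is a primitive of $S_i$. Since $|I_i|=m-1$ (the interval $[m+i+1,2m+i]$ has length $m$ and therefore contains exactly one multiple of $m$, which we remove), and since $m\notin I_i$, this will give $\embeddingdimensionoper(S_i)=|\{m\}\cup I_i|=m$.

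First I would observe that $m=\multiplicityoper(S_i)$, so $m$ is automatically a primitive, and then pin down the small elements of $S_i$: the multiples of $m$ and the elements of $I_i$ lie in $S_i$, and beyond $2m+i$ every integer lies in $S_i$ because $(2m+i)+1=m+(m+i+1)$ and $m+i+1\in I_i$ is the smallest element of $I_i$. The key observation is therefore that
\begin{equation*}
S_i\cap[1,m+i]=\{m\},
\end{equation*}
since the interval $(m,m+i+1)$ is disjoint from $\{m\}\cup I_i$ and cannot contain any sum $a+b$ with $a,b\ge m$.

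Now I would take an arbitrary $x\in I_i$ and assume, toward a contradiction, that $x=a+b$ with $a,b\in S_i\setminus\{0\}$. Then $a,b\ge m$, so $x\ge 2m$; but $x\not\equiv 0\pmod m$ by definition of $I_i$, hence $x\ge 2m+1$ (forcing $i\ge 1$). Assuming $a\le b$, we get $a\le x/2\le m+i/2$, so by the observation above $a=m$. This forces $b=x-m\in[m+1,m+i]$, contradicting $S_i\cap[m+1,m+i]=\emptyset$. Hence every $x\in I_i$ is primitive.

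Finally, since $S_i=\langle\{m\}\cup I_i\rangle$ by definition, the set of primitives is contained in $\{m\}\cup I_i$, and the two reverse inclusions combine to give $\primitivesoper(S_i)=\{m\}\cup I_i$, whence $\embeddingdimensionoper(S_i)=m$. I do not foresee a genuine obstacle: the only slightly delicate point is checking that the gap $(m,m+i+1)$ in $S_i$ is really a gap, which follows immediately from the description of $S_i$ and the fact that $m$ is its multiplicity.
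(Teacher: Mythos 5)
Your strategy is the same as the paper's: show that no element of the generating set $\{m\}\cup I_i$ is decomposable, so that $\primitivesoper(S_i)=\{m\}\cup I_i$ and $\embeddingdimensionoper(S_i)=1+\lvert I_i\rvert=m$; the count $\lvert I_i\rvert=m-1$ is correct. The genuine problem is your ``key observation'' $S_i\cap[1,m+i]=\{m\}$. You justify it by saying that the interval $(m,m+i+1)$ cannot contain a sum $a+b$ with $a,b\ge m$; but it contains $2m=m+m$ as soon as $i\ge m$, and more generally every multiple $km\le m+i$ lies in $S_i\cap[1,m+i]$. The lemma quantifies over all nonnegative $i$, and values $i\ge m$ do occur in the intended application (the paper's example with $\Gamma=100$ and $m=30$ runs $i$ up to $71$), so this is not a vacuous boundary case: for such $i$ the observation is false, and the ensuing deduction ``$a\le m+i/2$, hence $a=m$'' breaks down, since $a$ could equally be $2m,3m,\dots$.

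The repair is small. The correct form of the observation is that every nonzero element of $S_i\cap[1,m+i]$ is a multiple of $m$: any representation of an element of $S_i$ as a nonnegative combination of generators that uses a generator from $I_i$ yields a value at least $m+i+1$. With this, in your last step $a$ is a multiple of $m$, hence $b=x-a\equiv x\not\equiv 0\pmod m$; but $b\le x-m\le m+i$, so $b$ must itself be a multiple of $m$ --- a contradiction. The paper's proof reaches the same conclusion more directly and uniformly in $i$: a decomposable element of $S_i$ either involves a generator from $I_i$ in its decomposition, and is then greater than $\max I_i=2m+i$ because $I_i$ sits inside an interval of length $m$, or involves only $m$, and is then a multiple of $m$ exceeding $m$; in both cases it lies outside $\{m\}\cup I_i$.
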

	\begin{proof}
		Let \(s\) be a decomposable element of \(S_i\), that is, \(s\) can be written as a linear combination with nonnegative integer coefficients of at least two elements in \(\{m\}\cup I_i\). If the linear combination involves an element of \(I_i\), then \(s\) is bigger than the maximum of \(I_i\) (since \(I_i\) has length \(m\)). Otherwise, \(s\) is a multiple of \(m\) bigger than \(m\). In either case we have that \(s\not\in \{m\}\cup I_i\).
	\end{proof}

	Denote by \(\mathbf{T}_{m,i}\) the subtree rooted at \(S_i\), trimmed of the semigroup \(S_{i+1}\). (Notice the similarity with \(\mathbf{T}_m\), where the child \(\mathcal{O}_{m+1}\) of the root was not considered in the construction of the tree.) Then \(\mathbf{T}_m\) is obtained by adding edges \(\mathcal{S}_i\longrightarrow \mathcal{S}_{i+1}\)
	to the forest formed by the trees \(\mathbf{T}_{m,i}\).
	\medskip
	
	Suppose that we want to explore \(\mathbf{T}_m\) up to genus \(\Gamma\), and trim it for \(\prop{D}{\kappa}^{\Gamma}\). 	
	We proceed as in the proof of Proposition~\ref{prop:cut_superficial_Dk}.
	
	Let \(S'\) be a descendant of \(S_i\) of genus \(\Gamma\). Let \(j = \Gamma - \genusoper(S_i)= \Gamma-m+1-i\).
	By Lemma~\ref{lemma:edim-Si}, \(m(S_i)=m\), and by Lemma~\ref{lemma:edim_of_descendant}, \(\embeddingdimensionoper(S')\ge m-j\).
	It follows that \(\embeddingdimensionoper(S')\ge 
	2m-\Gamma-1+i\).
	If \(2m-\Gamma-1+i\ge m/3\), then, by Theorem~\ref{th:large-ed-are-wilf}, all descendants of \(S_i\) up to genus \(\Gamma\) are Wilf. Thus we can state the following:
	
	\begin{remark}
		With the above notation, if \(i \ge \Gamma + 1 -\lceil5m/3\rceil\), then \(S_i\) is \(\prop{D}{3}^{\Gamma}\)-cutting.
	\end{remark}
	
	The following example illustrates the consequences for the genus \(100\) case. Various multiplicities are considered.
	\begin{example}
		\begin{description}
			\item [\(m=30\)]
			Note that \(\genusoper(S_{71})=100\). As
			\(101 - \lceil5\times 30 / 3 \rceil = 51\), the semigroup \(S_{51}\) in \(\mathbf{T}_{30}^{100}\) can be cut of. 
			
			Note that if we want to explore \(\mathbf{T}_{30}^{100}\) in parallel by attributing each \(\mathbf{T}_{30,i}^{100}\) to a processor, there is no need to consider the \(\mathbf{T}_{30,i}^{100}\) with \(i\ge 51\).
			
			\item [\(m=40\)]
			Note that \(\genusoper(S_{61})=100\). As
			\(101 - \lceil5\times 40 / 3 \rceil = 101-70 = 31\), thus the semigroup \(S_{31}\) in \(\mathbf{T}_{40}^{100}\) can be cut of.
			
			\item [\(m=50\)]
			Note that \(\genusoper(S_{51})=100\). As
			\(101 - \lceil5\times 40 / 3 \rceil = 101-84 = 17\), thus the semigroup \(S_{17}\) in \(\mathbf{T}_{40}^{100}\) can be cut of.
		\end{description}
	\end{example}

	Adapting the ideas of Section~\ref{subsec:counting_non_generic} to trim \(\mathbf{T}_m\) for the property \(\prop{H}{}\) is equally easy and is left to the reader.

\section{A few technical details}\label{sec:technical_details}
This section aims to give some indication on how to apply in practice the ideas previously developed. It
has some computational flavour, without entering into too many technical details. Implementations will be open source, and part of~\cite{NStree-2019}.

Assume that the (big) primitives are known throughout the computations. This may seem not to be a reasonable assumption, since when removing a big primitive the set of big primitives must be recomputed (a time consuming task), but on the other hand, we found no way to avoid this aspect.
\subsection{On the determination of skilled big primitives}

	Let \(\mathcal{P}\) be a property on numerical semigroups. A \(\mathcal{P}\)-\emph{skilled primitive} of a numerical semigroup \(S\) is a big primitive \(p\in S\) such that \(S\setminus \{p\}\), the semigroup obtained by removing \(p\), is not a \(\mathcal{P}\)-cutting semigroup.

	Skilled primitives are the big primitives that have the skills to produce non-cutting semigroups.
	They serve the purpose of getting rid of the cutting semigroups when the semigroups tree is explored (possibly motivated by the search of counter-examples). Conceptually, these ideas are quite nice, since, when applied, less is left to be explored. But they are only of practical interest if one finds some way of determining the skilled primitives that compares favourably to the complete exploration, in terms of processing time. The remainder of this section deals with the determination of skilled primitives for the properties formerly introduced.

	The following proposition is an immediate consequence of the characterization for the \(\prop D{\kappa}\)-cutting semigroups given by Proposition~\ref{prop:characterizing-non-superficial-D-cutting-via-left-elements}.
	\begin{proposition}\label{prop:skilled-D}
		Let \(S\) be a non-superficial numerical semigroup and let \(\primitivesoper(S)=\{p_1<p_2<\ldots<p_e\}\) be the set of its primitives. 
		The set \(sk(S)\) of \(\prop D{\kappa}\)-skilled primitives of \(S\) is:
		\begin{enumerate}
			\item if \(\gcd(\leftsoper(S)) = 1\), then
			\(sk(S)=\left\{p_i\in \bigprimitivesoper(S) \mid i < \frac{\multiplicityoper(S)}{\kappa}\right\};\)
			\item if \(\gcd(\leftsoper(S)) \ne 1\), then
			\(sk(S)=\left\{p_i\in \bigprimitivesoper(S)\mid i < \frac{\multiplicityoper(S)}{\kappa} -1 \right\}.\)
		\end{enumerate}
	\end{proposition}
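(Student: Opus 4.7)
The plan is to read the statement off Proposition~\ref{prop:characterizing-non-superficial-D-cutting-via-left-elements} applied to the child $S' = S \setminus \{p_i\}$, since by definition $p_i$ is $\prop D{\kappa}$-skilled iff $S'$ is not $\prop D{\kappa}$-cutting. So the whole proof reduces to computing the invariants of $S'$ that enter that characterization, then negating the resulting condition and re-expressing the answer in terms of the index $i$.

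First I would record the following structural data for $S'=S\setminus\{p_i\}$. Since $p_i$ is big, $\conductoroper(S')=p_i+1$ and $\multiplicityoper(S')=\multiplicityoper(S)=m$, and $S'$ is non-superficial because $\conductoroper(S')>\conductoroper(S)\ge m$. By Lemma~\ref{lemma:removing_big_primitive_edim_of_descendant}, $\primitivesoper(S')$ is either $\primitivesoper(S)\setminus\{p_i\}$ or that set together with $\{p_i+m\}$; in either case the only primitives of $S'$ strictly below $p_i$ are $p_1,\ldots,p_{i-1}$, and since $\conductoroper(S')=p_i+1$ these are precisely the left primitives of $S'$. Hence $\leftembeddingdimensionoper(S')=i-1$, and Lemma~\ref{lemma:lefts_elements_primitives} gives $\gcd(\leftsoper(S'))=\gcd(p_1,\ldots,p_{i-1})$.

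Next I would apply Proposition~\ref{prop:characterizing-non-superficial-D-cutting-via-left-elements} to $S'$. The target proposition splits on $\gcd(\leftsoper(S))$, whereas the characterization splits on $\gcd(\leftsoper(S'))$, so the two must be related. From $\leftsoper(S)\subseteq\leftsoper(S')$ we get $\gcd(\leftsoper(S'))\mid\gcd(\leftsoper(S))$. Item (1) is then the easy case: $\gcd(\leftsoper(S))=1$ forces $\gcd(\leftsoper(S'))=1$, so part (a) of Proposition~\ref{prop:characterizing-non-superficial-D-cutting-via-left-elements} gives ``\(S'\) is cutting iff $\leftembeddingdimensionoper(S')\ge m/\kappa$''; substituting $\leftembeddingdimensionoper(S')=i-1$ and negating yields the stated skilled condition.

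The real work is in item (2), where $\gcd(\leftsoper(S))\ne 1$: now $\gcd(\leftsoper(S'))$ can be either trivial or not, according as some big primitive of $S$ strictly below $p_i$ fails to share a common factor with $\gcd(\leftsoper(S))$. I expect this sub-case analysis to be the main obstacle, because one must check that across these alternatives the characterization from Proposition~\ref{prop:characterizing-non-superficial-D-cutting-via-left-elements} still collapses to a single clean inequality in $i$. The key leverage should come from case (b) of that proposition, which relies on the strict jump $\embeddingdimensionoper>\leftembeddingdimensionoper$ that $\gcd(\leftsoper(S'))\ne 1$ forces (and which, by Proposition~\ref{prop:infinitely_many_descendants}, is the same structural feature responsible for $S$ having infinitely many descendants in the first place).
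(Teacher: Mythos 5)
Your reduction is exactly the one the paper intends: the paper's entire ``proof'' of this proposition is the sentence that it is an immediate consequence of Proposition~\ref{prop:characterizing-non-superficial-D-cutting-via-left-elements}, and your plan of applying that characterization to the child \(S'=S\setminus\{p_i\}\) after computing \(\conductoroper(S')=p_i+1\), \(\leftembeddingdimensionoper(S')=i-1\) and \(\gcd(\leftsoper(S'))=\gcd(p_1,\ldots,p_{i-1})\) is the right way to make that sentence precise. Those three computations are correct, and your observation that the statement's case split (on \(\gcd(\leftsoper(S))\)) does not match the case split the characterization needs for \(S'\) (on \(\gcd(\leftsoper(S'))\)) is exactly the crux.

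However, the proposal does not close either item, and the bookkeeping you set up actually contradicts the statement. In item (1) you assert that substituting \(\leftembeddingdimensionoper(S')=i-1\) into ``cutting iff \(\leftembeddingdimensionoper(S')\ge \multiplicityoper(S)/\kappa\)'' and negating ``yields the stated skilled condition''; it does not. It yields \(i-1<\multiplicityoper(S)/\kappa\), i.e.\ \(i<\multiplicityoper(S)/\kappa+1\), whereas the statement reads \(i<\multiplicityoper(S)/\kappa\). This is not a harmless rounding: take \(S=\langle 9,10\rangle_{20}\) and \(\kappa=3\), so \(\primitivesoper(S)=\{9,10,21,22,\ldots,26\}\) and \(p_3=21\) is a big primitive; \(S\setminus\{21\}\) has \(\langle 9,10\rangle\) (embedding dimension \(2<9/3\)) among its descendants, hence is not \(\prop D{3}\)-cutting, so \(p_3\) is skilled, yet the displayed condition \(i<3\) excludes it. So either your \(i-1\) is wrong (it is not) or the proposition as printed is off by one; you must address this rather than declare the conclusion reached. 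Item (2) you explicitly leave open, and the obstacle you anticipate is real and does not resolve as the statement requires: when \(\gcd(p_1,\ldots,p_{i-1})=1\) the child fails to be cutting iff \(i<\multiplicityoper(S)/\kappa+1\), and when \(\gcd(p_1,\ldots,p_{i-1})\ne 1\) iff \(i<\multiplicityoper(S)/\kappa\); neither threshold is the stated \(\multiplicityoper(S)/\kappa-1\), and the two do not collapse to one inequality in \(i\) without knowing where \(\gcd(p_1,\ldots,p_{i-1})\) first drops to \(1\). In short: right strategy and correct intermediate quantities, but the proof is unfinished, and finishing it honestly leads to a corrected statement rather than the printed one.
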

	In practice, one can efficiently determine the \(\prop D{\kappa}\)-skilled primitives, since \(\bigprimitivesoper\) is known at each step of the computations.

	One has similar results for the sets of \(\prop H{\ell}\)-skilled primitives and for the \(\mathcal{G}_{g}\)-skilled primitives. These are consequences of Propositions~\ref{prop:characterizing-M-cutting-via-left-elements} and~\ref{prop:characterizing-G-cutting-via-left-elements}, respectively.

	\begin{proposition}\label{prop:skilled-M}
		Let \(sk(S)\) be the set of \(\prop H{\ell}\)-skilled primitives of the numerical semigroup \(S\). The following occurs:
		\begin{enumerate}
			\item if \(\gcd(\leftsoper(S)) = 1\), then
			\(sk(S)=\left\{p\in \bigprimitivesoper(S)\mid \conductoroper(S) \le p \right\};\)
			\item if \(\gcd(\leftsoper(S)) \ne 1\), then all the big primitives of \(S\) are skilled.
		\end{enumerate}
	\end{proposition}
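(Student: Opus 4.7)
The plan is to apply Proposition~\ref{prop:characterizing-M-cutting-via-left-elements} to the child $S'=S\setminus\{p\}$ for each big primitive $p\in\bigprimitivesoper(S)$, and read off when $S'$ fails to be $\prop H{\ell}$-cutting. First I would compute $\leftsoper(S')$ explicitly. Since $p\ge\conductoroper(S)$ is a big primitive, its removal makes $p$ the new Frobenius number, giving $\conductoroper(S')=p+1$ and
\[
\leftsoper(S') \;=\; (S\setminus\{p\})\cap[0,p] \;=\; \leftsoper(S)\cup\{\conductoroper(S),\conductoroper(S)+1,\ldots,p-1\},
\]
with the last set empty exactly when $p=\conductoroper(S)$. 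The multiplicity is unchanged: $\multiplicityoper(S')=\multiplicityoper(S)$.

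In case (1), where $\gcd(\leftsoper(S))=1$, the inclusion $\leftsoper(S)\subseteq\leftsoper(S')$ forces $\gcd(\leftsoper(S'))=1$ for every big primitive $p$. By Proposition~\ref{prop:characterizing-M-cutting-via-left-elements}, the only remaining condition for $S'$ to be $\prop H{\ell}$-cutting is $\conductoroper(\langle\leftsoper(S')\rangle)\le\ell\multiplicityoper(S)$. Since $\conductoroper(S)\le p$ holds automatically for a big primitive, the claim reduces to showing that this conductor bound fails for every such $p$ — equivalently, that augmenting $\leftsoper(S)$ by the short interval $[\conductoroper(S),p-1]$ never pushes the generated conductor below $\ell\multiplicityoper(S)$.

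In case (2), where $\gcd(\leftsoper(S))\ne 1$, the argument splits on $p$. When $p=\conductoroper(S)$ the augmenting interval is empty, so $\leftsoper(S')=\leftsoper(S)$ and $\gcd(\leftsoper(S'))\ne 1$; the first clause of Proposition~\ref{prop:characterizing-M-cutting-via-left-elements} then shows that $S'$ cannot be $\prop H{\ell}$-cutting (it has infinitely many descendants by Proposition~\ref{prop:infinitely_many_descendants}), so $p$ is skilled. When $p>\conductoroper(S)$, a direct appeal to the characterization, using the explicit form of $\leftsoper(S')$ above, yields the same conclusion.

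The main obstacle is handling the boundary regime where the augmenting interval $[\conductoroper(S),p-1]$ is as long as possible (length $\multiplicityoper(S)-1$, since $p\le\conductoroper(S)+\multiplicityoper(S)-1$): one must verify that even in this extremal situation the generated conductor $\conductoroper(\langle\leftsoper(S')\rangle)$ remains strictly above $\ell\multiplicityoper(S)$, so that augmenting $\leftsoper(S)$ by finitely many consecutive integers immediately past $\conductoroper(S)$ cannot transform a non-cutting child into a cutting one.
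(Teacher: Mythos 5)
Your overall strategy---apply Proposition~\ref{prop:characterizing-M-cutting-via-left-elements} to each child \(S'=S\setminus\{p\}\), using \(\conductoroper(S')=p+1\) and \(\leftsoper(S')=\leftsoper(S)\cup[\conductoroper(S),p-1]\)---is exactly the route the paper intends (it gives no argument beyond citing that characterization), and your computation of \(\leftsoper(S')\) and your treatment of \(p=\conductoroper(S)\) in case (2) are correct. The problem lies in the two steps you defer: the ``main obstacle'' you isolate in case (1), and the ``direct appeal'' you invoke for \(p>\conductoroper(S)\) in case (2). Neither can be carried out, because both claims are false. For case (1), take \(\ell=3\) and \(S=\langle 4,5,11\rangle\): then \(\conductoroper(S)=8\), \(\leftsoper(S)=\{0,4,5\}\) has gcd \(1\), and \(11\) is the unique big primitive, so the stated formula asserts \(11\) is skilled; but \(S\setminus\{11\}=\langle 4,5\rangle\) is a leaf with conductor \(12=3\cdot4\), hence a \(\prop H{3}\)-cutting semigroup, so \(11\) is \emph{not} skilled. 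Adjoining \([8,10]\) does not push the generated conductor above \(\ell\multiplicityoper(S)\) --- indeed \(\langle\leftsoper(S)\rangle=\langle 4,5\rangle\) already has conductor exactly \(12\le\ell\multiplicityoper(S)\), so the premise behind your ``boundary regime'' analysis (that the bound would otherwise fail) is not available. For case (2), take \(S=\langle 3,5,7\rangle\), with \(\conductoroper(S)=5\), \(\leftsoper(S)=\{0,3\}\) of gcd \(3\), and the big primitive \(p=7>\conductoroper(S)\): then \(S\setminus\{7\}=\langle 3,5\rangle\) is a leaf with conductor \(8\le 9=3\multiplicityoper(S)\), hence \(\prop H{3}\)-cutting, so \(7\) is not skilled although clause (2) claims every big primitive is.

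Structurally, what goes wrong is that removing a big primitive \(p>\conductoroper(S)\) adjoins the whole interval \([\conductoroper(S),p-1]\) to the left elements; this can both restore coprimality (defeating clause (2)) and fill in \(\langle\leftsoper(S)\rangle\) enough that the generated conductor lands at or below \(\ell\multiplicityoper(S)\) (defeating clause (1)). The honest endpoint of your own computation is the correct criterion: \(p\) is \(\prop H{\ell}\)-skilled if and only if it is \emph{not} the case that \(\gcd\bigl(\leftsoper(S)\cup[\conductoroper(S),p-1]\bigr)=1\) and \(\conductoroper\bigl(\langle\leftsoper(S)\cup[\conductoroper(S),p-1]\rangle\bigr)\le\ell\multiplicityoper(S)\). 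The clean membership conditions printed in the proposition do not follow from this (note also that \(\conductoroper(S)\le p\) holds for \emph{every} big primitive, so clause (1) as written likewise just says ``all big primitives are skilled''); you should report the statement as flawed rather than try to close the gap.
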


	\begin{proposition}\label{prop:skilled-G}
		Let \(sk(S)\) be the set of \(\mathcal{G}_{g}\)-skilled primitives of the numerical semigroup \(S\). The following occurs:
		\begin{enumerate}
			\item if \(\gcd(\leftsoper(S)) = 1\), then
			\(sk(S)=\left\{p\in \bigprimitivesoper(S)\mid \genusoper(S) \le g \right\};\)
			\item if \(\gcd(\leftsoper(S)) \ne 1\), then all the big primitives of \(S\) are skilled.
		\end{enumerate}
	\end{proposition}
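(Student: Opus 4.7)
The plan is to unfold the definition of skilled primitive and apply the characterization of $\mathcal{G}_g$-cutting semigroups given in Proposition~\ref{prop:characterizing-G-cutting-via-left-elements} to the child $S':=S\setminus\{p\}$. By definition, $p\in sk(S)$ iff $S'$ is not $\mathcal{G}_g$-cutting, which by that proposition amounts to the disjunction $\gcd(\leftsoper(S'))\ne 1$ or $\genusoper(\langle\leftsoper(S')\rangle)>g$. So the task reduces to checking this disjunction in the two cases of the statement.

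The key computational step is to describe $\leftsoper(S')$ explicitly in terms of $\leftsoper(S)$ and $p$. Since $p$ is a big primitive of $S$, one has $\Frobeniusoper(S')=p$ and $\conductoroper(S')=p+1$; using $S\cap[0,p]=\leftsoper(S)\cup[\conductoroper(S),p]$ a direct set computation yields
\[
\leftsoper(S')=\leftsoper(S)\cup[\conductoroper(S),p-1],
\]
which reduces to $\leftsoper(S)$ when $p=\conductoroper(S)$ and is a strict enlargement otherwise. Consequently $\langle\leftsoper(S)\rangle\subseteq\langle\leftsoper(S')\rangle$, so $\gcd(\leftsoper(S'))$ divides $\gcd(\leftsoper(S))$, and $\genusoper(\langle\leftsoper(S')\rangle)\le\genusoper(\langle\leftsoper(S)\rangle)$ whenever both are numerical semigroups.

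For case (1), where $\gcd(\leftsoper(S))=1$, the divisibility observation forces $\gcd(\leftsoper(S'))=1$, so the cutting condition collapses to a pure genus comparison $\genusoper(\langle\leftsoper(S')\rangle)\le g$. I would then invoke Corollary~\ref{cor:lefts_descends_from_S_bound_for_genera_and_conductors}, which identifies $\langle\leftsoper(S')\rangle$ as the descendant of $S'$ of maximal genus, together with Remark~\ref{rem:bound_for_genus_in_terms_of_conductor}, to convert the genus condition on $\langle\leftsoper(S')\rangle$ into the stated condition $\genusoper(S)\le g$. For case (2), where $\gcd(\leftsoper(S))\ne 1$, I would split on whether $[\conductoroper(S),p-1]$ is empty, a singleton, or contains at least two consecutive integers; in the first sub-case the gcd is preserved and $S'$ has infinitely many descendants by Proposition~\ref{prop:infinitely_many_descendants}; in the remaining sub-cases, divisibility relations between the common divisor of $\leftsoper(S)$ and the added endpoints, combined with Corollary~\ref{cor:left_descendant_no_bounds}, would be used to conclude that either the gcd still fails to be $1$ or $\genusoper(\langle\leftsoper(S')\rangle)>g$.

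The main obstacle I anticipate lies in case (2): the two consecutive integers that appear in $[\conductoroper(S),p-1]$ when $p\ge\conductoroper(S)+2$ can drop the gcd of the left elements of $S'$ all the way down to $1$, and the blanket conclusion "all big primitives are skilled" then needs a genuine genus argument to rescue it rather than a purely divisibility-based one. Making that argument uniform across all choices of $p$, without implicit hypotheses on $g$ or the shape of $\leftsoper(S)$, is where I would expect to spend most of the care, and is the point where the short analogue in Proposition~\ref{prop:skilled-M} is not available off the shelf.
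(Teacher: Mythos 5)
Your framework is the right one, and it is essentially all the paper itself supplies: Proposition~\ref{prop:skilled-G} is stated without proof, merely asserted to be a consequence of Proposition~\ref{prop:characterizing-G-cutting-via-left-elements}, and your unfolding of the definition of skilled primitive together with the computation \(\leftsoper(S\setminus\{p\})=\leftsoper(S)\cup[\conductoroper(S),p-1]\) is exactly the intended route. The difficulty is that this route cannot reach the stated destination, because the statement as printed fails in both cases, and your plan breaks precisely where this bites. In case (1), the step you describe as ``convert the genus condition on \(\langle\leftsoper(S')\rangle\) into the stated condition \(\genusoper(S)\le g\)'' has no valid implementation: the conditions \(\genusoper(\langle\leftsoper(S\setminus\{p\})\rangle)>g\) and \(\genusoper(S)\le g\) are not equivalent (note also that the printed condition does not depend on \(p\) at all). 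Take \(S=\langle 3,7,11\rangle\), so \(\genusoper(S)=5\), \(\gcd(\leftsoper(S))=1\) and \(\bigprimitivesoper(S)=\{11\}\); then \(S\setminus\{11\}=\langle 3,7\rangle\) is a leaf of genus \(6\). For \(g=6\) the proposition predicts \(11\in sk(S)\) (since \(5\le 6\)), yet \(\langle 3,7\rangle\) is \(\prop{G}{6}\)-cutting, so \(11\) is not skilled; for \(g=4\) it predicts \(sk(S)=\emptyset\), yet \(\langle 3,7\rangle\) has genus \(6>4\), so \(11\) is skilled. What your computation does salvage is a one-sided implication: since \(p\) is primitive in \(S\), one has \(p\notin\langle\leftsoper(S\setminus\{p\})\rangle\subseteq S\), whence \(\genusoper(\langle\leftsoper(S\setminus\{p\})\rangle)\ge\genusoper(S)+1\), so \(\genusoper(S)\ge g\) forces every big primitive to be skilled.

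The obstacle you flag in case (2) is likewise not a technicality to be worked around but an outright counterexample. With \(S=\langle 3,7,8\rangle\) one has \(\gcd(\leftsoper(S))=3\ne 1\) and \(\bigprimitivesoper(S)=\{7,8\}\), but \(S\setminus\{8\}=\langle 3,7,11\rangle\) has left elements \(\{0,3,6,7\}\) with gcd \(1\) and \(\langle\leftsoper(S\setminus\{8\})\rangle=\langle 3,7\rangle\) of genus \(6\); hence for every \(g\ge 6\) the child \(S\setminus\{8\}\) is \(\prop{G}{g}\)-cutting and \(8\) is not skilled. No ``genuine genus argument'' can rescue the blanket claim. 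The honest conclusion of your (correct) reduction is that what Proposition~\ref{prop:characterizing-G-cutting-via-left-elements} actually yields is \(sk(S)=\{p\in\bigprimitivesoper(S)\mid \gcd(\leftsoper(S)\cup[\conductoroper(S),p-1])\ne 1 \text{ or } \genusoper(\langle\leftsoper(S)\cup[\conductoroper(S),p-1]\rangle)>g\}\), a condition that genuinely depends on \(p\) and cannot be replaced by a condition on \(S\) alone; the proposition needs to be restated, not reproved.
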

	
	One may immediately suspect that determining the \(\mathcal{H}\)-skilled or the \(\mathcal{G}\)-skilled primitives is much harder than determining the \(\mathcal{D}\)-skilled ones. At first sight one has to determine the conductor or the genus of the semigroup generated by the left elements, which may be big, even when only small primitives (recall the case of semigroups generated by \(2\) elements: the conductor approaches the product of the generators (by Sylvester's formula))! Fortunately, the determination of the conductor or of the genus is not really needed and the results below can be used. Check also Example~\ref{example:check_small_conductor}.

	Computing the conductor of a  numerical semigroup given through a set of generators is well known to be hard (see~\cite{Ramirez-Alfonsin2005Book-Diophantine}). 
	When a good upper bound is known in advance, the problem turns out to be less time consuming. When it is the case, Corollary~\ref{cor:conductor-bounded} can be applied. 

	\begin{lemma}\label{lemma:conductor_truncated_sgp}
		Let \(S\) be a numerical semigroup with multiplicity \(\multiplicityoper\), and let \(k\) be a positive integer. Then
		\begin{equation*}
		\conductoroper(S)\le k \text{ if and only if } \conductoroper(\langle S\rangle_{k+\multiplicityoper -1})\le k.
		\end{equation*} 
	\end{lemma}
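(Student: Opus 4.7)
Set $T := \langle S\rangle_{k+\multiplicityoper-1}$. My plan is to prove the two implications separately.

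For the forward direction, suppose $\conductoroper(S)\le k$. Then $S\supseteq[k,\infty)\supseteq[k+\multiplicityoper-1,\infty)$, so $S$ itself is a numerical semigroup containing $S$ together with every integer $\ge k+\multiplicityoper-1$. By the minimality built into the definition of $\langle\cdot\rangle_{t}$ this forces $T=S$, whence $\conductoroper(T)=\conductoroper(S)\le k$.

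The backward direction is the substantive step. The key observation is the presentation
\[
T \;=\; \langle S\cup\{k+\multiplicityoper-1,\,k+\multiplicityoper,\,k+\multiplicityoper+1,\ldots\}\rangle,
\]
so each element of $T$ is a nonnegative integer combination of generators drawn from $S$ and from the adjoined tail. Since every tail generator is at least $k+\multiplicityoper-1$, any combination using at least one tail generator is itself at least $k+\multiplicityoper-1$. Consequently, elements of $T$ strictly below $k+\multiplicityoper-1$ are obtained from $S$-generators alone and therefore lie in $S$, giving the set identity
\[
T\cap[0,\,k+\multiplicityoper-2] \;=\; S\cap[0,\,k+\multiplicityoper-2].
\]
Assuming $\conductoroper(T)\le k$, one has $[k,\infty)\subseteq T$, and the identity above transfers the block $[k,\,k+\multiplicityoper-2]$ into $S$.

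To conclude $\conductoroper(S)\le k$ one extends this block to $[k,\infty)\subseteq S$. The standard tool is that $\multiplicityoper\in S$: a run of $\multiplicityoper$ consecutive integers in $S$ starting at $k$ can, by repeatedly adding $\multiplicityoper$, be propagated to cover every integer $\ge k$. The main obstacle — and the part of the argument that demands the most care — is that the block $[k,\,k+\multiplicityoper-2]$ obtained above contains only $\multiplicityoper-1$ consecutive integers, one short of what the translation step needs. Recovering the boundary point $k+\multiplicityoper-1\in S$ is therefore the decisive step; I would handle it by examining how $k+\multiplicityoper-1$ must be represented inside $T$ and using the structural identity above, together with $\multiplicityoper\in S$ and $k\in S$, to trace a representation back into $S$.
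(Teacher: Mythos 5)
Your forward implication is fine (so is the paper's, which instead just notes \(\conductoroper(\langle S\rangle_{k+\multiplicityoper-1})\le\conductoroper(S)\)), and your structural identity \(T\cap[0,k+\multiplicityoper-2]=S\cap[0,k+\multiplicityoper-2]\) is correct: indeed \(T=S\cup\left\{k+\multiplicityoper-1,\rightarrow\right\}\), since that union is already a numerical semigroup. But the backward direction is not actually proved. You correctly isolate the decisive step --- getting \(k+\multiplicityoper-1\in S\) so as to have \(\multiplicityoper\) consecutive elements to translate by \(\multiplicityoper\) --- and then only sketch a plan for it. That plan cannot succeed: \(k+\multiplicityoper-1\) is itself one of the adjoined tail generators of \(T\), so ``examining how it is represented inside \(T\)'' tells you nothing; it lies in \(T\) by fiat and need not lie in \(S\). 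In fact the step, and the lemma as stated, are false. Take \(S=\langle 3,4\rangle\), so \(\multiplicityoper=3\) and \(\conductoroper(S)=6\), and take \(k=3\). Then \(k+\multiplicityoper-1=5\) and \(\langle S\rangle_{5}=\{0,3,4\}\cup\left\{5,\rightarrow\right\}\) has conductor \(3\le k\), while \(\conductoroper(S)=6>k\).

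For what it is worth, the paper's own proof of the converse makes exactly the leap you were (rightly) worried about: it asserts that every \(t\) with \(k\le t<k+\multiplicityoper\) belongs to \(S\), whereas the hypothesis only yields \([k,k+\multiplicityoper-2]\subseteq S\), which is \(\multiplicityoper-1\) consecutive elements --- one short. The statement becomes true, and your argument closes immediately, if the truncation point is \(k+\multiplicityoper\) rather than \(k+\multiplicityoper-1\): then \(T=S\cup\left\{k+\multiplicityoper,\rightarrow\right\}\), the two semigroups agree on \([0,k+\multiplicityoper-1]\), and \(\conductoroper(T)\le k\) forces the full block \([k,k+\multiplicityoper-1]\) of \(\multiplicityoper\) consecutive integers into \(S\), after which repeatedly adding \(\multiplicityoper\) gives \(\left\{k,\rightarrow\right\}\subseteq S\). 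That corrected form is also the one actually used in Example~\ref{example:check_small_conductor}, where genericity \(\conductoroper\le 3\multiplicityoper\) is tested via \(\langle G\rangle_{4\multiplicityoper}\), i.e.\ truncation at \(k+\multiplicityoper\) with \(k=3\multiplicityoper\). So: flag the off-by-one in the statement rather than trying to repair the missing step.
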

	\begin{proof}
		Clearly \(\conductoroper(\langle S\rangle_{k+\multiplicityoper -1})\le\conductoroper(S)\).
		Thus, if \(\conductoroper(S)\le k\), then \(\conductoroper(\langle S\rangle_{k+\multiplicityoper -1})\le k\).
	
		In order to prove the converse, suppose that \(\conductoroper(\langle S\rangle_{k+\multiplicityoper-1})\le k\). Then all the integers \(t\) such that \(k\le t< k+m\) belong to \(S\), which forces all the integers non smaller than \(k\) to belong to \(S\). 
	\end{proof}
	\begin{corollary}\label{cor:conductor-bounded}
		If \(\conductoroper(\langle S\rangle_{k+\multiplicityoper -1})\le k\), then \(\conductoroper(S)=\conductoroper(\langle S\rangle_{k+\multiplicityoper -1})\). 
	\end{corollary}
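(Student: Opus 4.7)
The plan is to obtain the corollary as a direct consequence of Lemma~\ref{lemma:conductor_truncated_sgp}, essentially by observing that once the conductor of $S$ is known to be bounded by $k$, the truncation at $k+\multiplicityoper-1$ does not actually add anything new.

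First I would apply Lemma~\ref{lemma:conductor_truncated_sgp} in the ``if'' direction: from the hypothesis $\conductoroper(\langle S\rangle_{k+\multiplicityoper-1})\le k$ we immediately get $\conductoroper(S)\le k$. This already tells us that both conductors lie in the interval $[0,k]$, but we need the stronger conclusion that they coincide.

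Next I would show $S=\langle S\rangle_{k+\multiplicityoper-1}$, from which equality of conductors is automatic. The inclusion $S\subseteq\langle S\rangle_{k+\multiplicityoper-1}$ is built into the definition of the truncated semigroup. For the reverse inclusion, I would use that $\conductoroper(S)\le k\le k+\multiplicityoper-1$, so every integer $t\ge k+\multiplicityoper-1$ satisfies $t\ge\conductoroper(S)$ and therefore $t\in S$. Thus $S$ already contains every element that is adjoined in forming $\langle S\rangle_{k+\multiplicityoper-1}$, giving $S\supseteq\langle S\rangle_{k+\multiplicityoper-1}$.

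There is really no obstacle here: the argument is a two-line consequence of the lemma plus the definition of $\langle\,\cdot\,\rangle_t$. The only point requiring a moment of care is to observe that the truncation index $k+\multiplicityoper-1$ is at least $k$ (so that the bound on $\conductoroper(S)$ obtained from the lemma is enough to force equality of the two semigroups); this is immediate since $\multiplicityoper\ge 1$.
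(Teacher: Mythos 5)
Your argument is correct and is exactly the derivation the paper intends (the corollary is stated without proof, as an immediate consequence of Lemma~\ref{lemma:conductor_truncated_sgp}): the lemma gives \(\conductoroper(S)\le k\), and since \(S\) then already contains every integer \(\ge k+\multiplicityoper-1\), one has \(\langle S\rangle_{k+\multiplicityoper-1}=S\cup\{k+\multiplicityoper-1,\rightarrow\}=S\), so the conductors coincide. Nothing is missing.
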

	When one just wants to know whether the conductor is smaller than a certain given quantity, one may profit of Lemma~\ref{lemma:conductor_truncated_sgp} to get a speed up, by proceeding as illustrated in the following example.

	\begin{example}\label{example:check_small_conductor}
		Let \(S=\langle G\rangle\) be a numerical semigroup given through a set \(G\) of generators.
		In order to test whether the  semigroup \(\langle G\rangle\) satisfies \(\conductoroper(\langle G\rangle)\le 3\multiplicityoper\) (that is, to check if \(S\) is generic), it suffices to check whether \(\conductoroper(\langle G\rangle_{4\multiplicityoper})\le 3\multiplicityoper\). This amounts to verify whether \(\{t\mid 3\multiplicityoper\le t <4\multiplicityoper\}\) consists of elements in \(G+G+G+G\) (that is, of elements with factorization lengths no bigger than \(4\)).
	\end{example}

	Computing the genus of a numerical semigroup from a set of generators is hard (as is computing the Frobenius number). For our purposes, a similar strategy to the above one for the conductor (and making use of it too) may be useful.
	\begin{lemma}
		Let \(S\) be a numerical semigroup with multiplicity \(\multiplicityoper\), let \(g\) be a positive integer and let \(T =\langle S\rangle_{2g+m}\). Then
		\[\genusoper(S)\le g \text{ if and only if }\genusoper(T)\le g.\]
	\end{lemma}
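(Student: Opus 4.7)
The plan hinges on first establishing the explicit description $T = S \cup \{2g+m,2g+m+1,\ldots\}$. The inclusion $\supseteq$ is just the definition of $\langle\cdot\rangle_{2g+m}$. For $\subseteq$, I would observe that the set on the right is already a cofinite submonoid of $\mathbb{N}$: a sum of two elements of $S$ lies in $S$, and a sum involving an integer $\ge 2g+m$ is itself $\ge 2g+m$, hence lies in the tail. So this union is a numerical semigroup containing $S$ and $\{2g+m,\rightarrow\}$, and by minimality it must equal $T$.

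From this description the forward implication is essentially free: since $S\subseteq T$, the gaps of $T$ are a subset of the gaps of $S$, so $\genusoper(T)\le\genusoper(S)$, and $\genusoper(S)\le g$ immediately gives $\genusoper(T)\le g$.

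For the converse, I would assume $\genusoper(T)\le g$ and aim at the stronger conclusion $T=S$, which yields $\genusoper(S)=\genusoper(T)\le g$. Using Remark~\ref{rem:bound_for_genus_in_terms_of_conductor}, $\conductoroper(T)\le 2\genusoper(T)\le 2g$, so $T$ contains the $m$ consecutive integers $2g,2g+1,\ldots,2g+m-1$. Each of these lies strictly below the threshold $2g+m$, and therefore by the explicit description of $T$ above they must already belong to $S$. Since $m\in S$, adding $m$ to these $m$ consecutive elements fills the block $\{2g+m,\ldots,2g+2m-1\}\subseteq S$, and iterating shows $\{2g,\rightarrow\}\subseteq S$. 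Hence $\conductoroper(S)\le 2g<2g+m$, so the tail $\{2g+m,\rightarrow\}$ is already inside $S$, giving $T=S$.

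No step here is genuinely difficult. The only point requiring any care is the first paragraph, justifying the shape $T=S\cup\{2g+m,\rightarrow\}$; once that is in hand the argument reduces to Remark~\ref{rem:bound_for_genus_in_terms_of_conductor} combined with the elementary fact that $m$ consecutive integers in a numerical semigroup of multiplicity $m$ force all subsequent integers to lie in the semigroup. The proof parallels the one of Lemma~\ref{lemma:conductor_truncated_sgp} in spirit: one doubles the bound ($2g$ instead of $k$) precisely to absorb the factor $2$ coming from $\conductoroper\le 2\genusoper$.
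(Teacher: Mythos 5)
Your proof is correct and follows essentially the same route as the paper: the forward direction from $S\subseteq T$, and the converse via Remark~\ref{rem:bound_for_genus_in_terms_of_conductor} together with the observation that $m$ consecutive elements below the truncation threshold force $\conductoroper(S)\le 2g$, hence $S=T$. The only difference is organizational — the paper argues the converse by contraposition on $\conductoroper(S)$, while you argue it directly and spell out the description $T=S\cup\{2g+m,\rightarrow\}$ that the paper leaves implicit.
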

	\begin{proof}
		From \(S\subseteq T\) it follows that  \(\genusoper(S)\ge \genusoper(T)\). It follows that if \(\genusoper(S)\le g\) then \(\genusoper(T)\le g\).
		
		For the converse, note that if \(\conductoroper(S)\le 2g+\multiplicityoper\), then \(S=T\) and nothing has to be proved in this case. If \(\conductoroper(S)>2g+\multiplicityoper\), then \(\conductoroper(T)> 2g\). But then \(\genusoper(T)>g\), by Remark~\ref{rem:bound_for_genus_in_terms_of_conductor}, and consequently \(\genusoper(S)>g\). 
	\end{proof}

	The following remark is just meant to illustrate a kind of results that one can obtain to speed up computations. It says that in numerical semigroups having some very large primitive all big primitives are \(\prop H{\ell}\)-skilled.
	
	\begin{remark}\label{rem:large_primitive_implies_no_Cl_cutting}
		Let \(S\) be a non-superficial numerical semigroup and let \(g=\max(\primitivesoper(S))\). If \(g>(\ell+1)\multiplicityoper(S)\), then no descendant of \(S\) is a \(\prop H{\ell}\)-cutting semigroup. 
	\end{remark}
	\begin{proof}
		Let \(S'\) be a numerical semigroup such that \(S'\preceq S\).
		Note that, as \(S\) is non-superficial, \(\multiplicityoper(S')=\multiplicityoper(S)\).
		If \(\conductoroper(S')>\ell\multiplicityoper(S)\), then \(S'\) is non \(\prop H{\ell}\)-cutting. If \(\conductoroper(S')\le\ell\multiplicityoper(S)\), then \(g\) is a big primitive of \(S'\), and therefore \(S''=S'\setminus\{g\}\) is a descendant of \(S'\) (and of \(S\)). As \(\conductoroper(S'')>g>(\ell+1)\multiplicityoper(S)\), we have that \(S''\) does not satisfy \(\prop H{\ell}\) and therefore \(S'\) is not \(\prop H{\ell}\)-cutting.
	\end{proof}

\subsection{Encoding with redundancy}\label{sec:encoding_with_redundancy}
	To compute with numerical semigroups one needs to encode them in some way. 
	Fromentin and Hivert~\cite{FromentinHivert2016MC-Exploring} did the space complexity analysis for the case of the encoding they used (which had some redundancy).  
	For any reasonable encoding, the memory used to explore in a depth first manner the tree of classical numerical semigroups (up to a given level) stays beyond a not too high bound. 
	As a consequence, one can use some encoding with redundancy without entering into memory problems as the tree is being explored. 
	Although the primitives suffice to encode the semigroup, one may wonder on why not keep storing other data that can be computed at a low computational cost (which is redundant, in what concerns the description of the semigroup, but can be used to speed up computations)? For instance, storing the number of left elements, the number of primitives and the conductor can be done at a very low computational cost. This data can be used to compute the Wilf number. Other data, for instance to speed up the test of whether the new primitive candidate is in fact primitive, can also be stored.

	Thus, one can count, test Wilf, \(0\)-Wilf and Eliahou at a cost that is not much higher than just counting. Our experiments suggest that all the referred tests can be done using this strategy in about three times the time required for just counting.
	\medskip

	Now we are in a position to summarize and better justify the great efficiency one finds when dealing with the property \(\prop D{\kappa}\), opposed to the one found for the other properties, as pointed out along the present text.

	The left embedding dimension can be computed at a low computational cost from the data mentioned above. Thus, trimming the tree for \(\prop D{\kappa}\) is not costly. And the number of semigroups is drastically reduced, as suggested by Table~\ref{table:by_genus_low_edim} (see also Figure~\ref{fig:dtrimmed_tree}).

	There is a major drawback in what concerns the other properties considered.
	I have not been able to efficiently do some computations involving the semigroups generated by left elements. It is the case of checking whether the conductor or the genus of the semigroup generated by the left elements stay below certain bounds, for instance.

\newpage

\subsubsection*{Acknowledgements}\label{sec:acknowledgements}
I would like to thank my colleges at the FCUP's Mathematics department who made possible for me to benefit of a sabbatical during the academic year 2018/2019. The hospitality found in the Instituto de Matemáticas de la Universidad de Granada (IEMath-GR), so as at the Laboratoire de Mathématiques Pures et Appliquées Joseph Liouville (LMPA), a research centre of the l’Université du Littoral Côte d’Opale, was amazing. This made the writing of this paper a lot easier. 
Many thanks to the IEMath-GR and the LMPA. These stays in Granada and Calais were made possible by Pedro García-Sánchez and Shalom Eliahou, respectively, and I would like to thank them both.

I would also like to warmly thank many people that patiently heard my ideas and motivations, in particular those organizing or attending the regular \emph{International Meeting on Numerical Semigroups}. 
Although risking some unforgivable forgetfulness, I would like to mention among them, in alphabetical order: Claude, Jean, Pedro and Shalom. Their surnames are Marion, Fromentin, García-Sánchez and Eliahou, respectively. I highly appreciated their feedback along the various phases of this project.


\begin{thebibliography}{10}

\bibitem{Backelin1990MS-number}
J\"orgen Backelin.
\newblock On the number of semigroups of natural numbers.
\newblock {\em Math. Scand.}, 66(2):197--215, 1990.
\newblock URL: \url{https://doi.org/10.7146/math.scand.a-12304}, \href
  {http://dx.doi.org/10.7146/math.scand.a-12304}
  {\path{doi:10.7146/math.scand.a-12304}}.

\bibitem{BlancoRosales2012SF-set}
V.~Blanco and J.~C. Rosales.
\newblock The set of numerical semigroups of a given genus.
\newblock {\em Semigroup Forum}, 85(2):255--267, 2012.
\newblock URL: \url{https://doi.org/10.1007/s00233-012-9378-2}, \href
  {http://dx.doi.org/10.1007/s00233-012-9378-2}
  {\path{doi:10.1007/s00233-012-9378-2}}.

\bibitem{BlancoGarcia-SanchezPuerto2011IJAC-Counting}
V\'{i}ctor Blanco, Pedro~A. Garc\'{i}a-S\'{a}nchez, and Justo Puerto.
\newblock Counting numerical semigroups with short generating functions.
\newblock {\em Internat. J. Algebra Comput.}, 21(7):1217--1235, 2011.
\newblock URL: \url{https://doi.org/10.1142/S0218196711006911}, \href
  {http://dx.doi.org/10.1142/S0218196711006911}
  {\path{doi:10.1142/S0218196711006911}}.

\bibitem{Bras-Amoros2008SF-Fibonacci}
Maria Bras-Amor\'{o}s.
\newblock Fibonacci-like behavior of the number of numerical semigroups of a
  given genus.
\newblock {\em Semigroup Forum}, 76(2):379--384, 2008.
\newblock URL: \url{https://doi.org/10.1007/s00233-007-9014-8}, \href
  {http://dx.doi.org/10.1007/s00233-007-9014-8}
  {\path{doi:10.1007/s00233-007-9014-8}}.

\bibitem{Bras-Amoros2009JPAA-Bounds}
Maria Bras-Amor\'os.
\newblock Bounds on the number of numerical semigroups of a given genus.
\newblock {\em J. Pure Appl. Algebra}, 213(6):997--1001, 2009.
\newblock URL: \url{http://dx.doi.org/10.1016/j.jpaa.2008.11.012}, \href
  {http://dx.doi.org/10.1016/j.jpaa.2008.11.012}
  {\path{doi:10.1016/j.jpaa.2008.11.012}}.

\bibitem{Bras-Amoros2018inproc-Different}
Maria Bras{-}Amor{\'{o}}s.
\newblock Different tree approaches to the problem of counting numerical
  semigroups by genus.
\newblock In {\em Proceedings of the 11th International Conference on Random
  and Exhaustive Generation of Combinatorial Structures, GASCom 2018, Athens,
  Greece, June 18-20, 2018.}, pages 7--11, 2018.
\newblock URL: \url{http://ceur-ws.org/Vol-2113/invited2.pdf}.

\bibitem{Bras-AmorosBulygin2009SF-Towards}
Maria Bras-Amor{\'o}s and Stanislav Bulygin.
\newblock Towards a better understanding of the semigroup tree.
\newblock {\em Semigroup Forum}, 79(3):561--574, 2009.
\newblock URL: \url{http://dx.doi.org/10.1007/s00233-009-9175-8}, \href
  {http://dx.doi.org/10.1007/s00233-009-9175-8}
  {\path{doi:10.1007/s00233-009-9175-8}}.

\bibitem{Bras-AmorosFernandez-Gonzalez2018MC-Computation}
Maria Bras-Amor\'os and Julio Fern\'andez-Gonz\'alez.
\newblock Computation of numerical semigroups by means of seeds.
\newblock {\em Math. Comp.}, 87(313):2539--2550, 2018.
\newblock URL: \url{https://doi.org/10.1090/mcom/3292}, \href
  {http://dx.doi.org/10.1090/mcom/3292} {\path{doi:10.1090/mcom/3292}}.

\bibitem{BrunsGarcia-SanchezONeilWilburne2019ae-Wilfs}
Winfried {Bruns}, Pedro {Garcia-Sanchez}, Christopher {O'Neil}, and Dane
  {Wilburne}.
\newblock {Wilf's conjecture in fixed multiplicity}.
\newblock {\em arXiv e-prints}, page arXiv:1903.04342, Mar 2019.
\newblock \href {http://arxiv.org/abs/1903.04342} {\path{arXiv:1903.04342}}.

\bibitem{NumericalSgps1.2.0}
M.~Delgado, P.~A. Garcia-Sanchez, and J.~Morais.
\newblock {NumericalSgps}, a package for numerical semigroups, {V}ersion 1.2.0,
  April 2019.
\newblock Refereed GAP package.
\newblock URL: \url{https://gap-packages.github.io/numericalsgps/}.

\bibitem{Delgado2018MZ-question}
Manuel Delgado.
\newblock On a question of {E}liahou and a conjecture of {W}ilf.
\newblock {\em Math. Z.}, 288(1-2):595--627, 2018.
\newblock URL: \url{https://doi.org/10.1007/s00209-017-1902-3}, \href
  {http://dx.doi.org/10.1007/s00209-017-1902-3}
  {\path{doi:10.1007/s00209-017-1902-3}}.

\bibitem{Delgado2019ae-Conjecture}
Manuel Delgado.
\newblock {Conjecture of Wilf: a survey}.
\newblock {\em arXiv e-prints}, page arXiv: 1902.03461, 2019.
\newblock URL: \url{https://arxiv.org/abs/1902.03461}.

\bibitem{IntPic-2019}
Manuel Delgado.
\newblock {\em IntPic -- a GAP package for drawing integers}, 8 2019.
\newblock Version 0.2.4.
\newblock URL: \url{http://www.gap-system.org/Packages/intpic.html}.

\bibitem{NStree-2019}
Manuel Delgado.
\newblock {\em NStree -- a GAP package with routines for the numerical
  semigroups tree}, 2019.
\newblock Work in progress.
\newblock URL: \url{http://www.fc.up.pt/cmup/mdelgado}.

\bibitem{Delgado2019-Probing}
Manuel Delgado.
\newblock Probing wilf's conjecture and related problems through efficient tree
  trimming, 2019.
\newblock In preparation.

\bibitem{DelgadoFromentin2019}
Manuel Delgado and Jean Fromentin, 2019.
\newblock In preparation.

\bibitem{Eliahou2018JEMS-Wilfs}
Shalom Eliahou.
\newblock Wilf’s conjecture and {M}acaulay’s theorem.
\newblock {\em J. Eur. Math. Soc. (JEMS)}, 20(9):2105--2129, 2018.
\newblock URL: \url{https://doi.org/10.4171/JEMS/807}, \href
  {http://dx.doi.org/10.4171/JEMS/807} {\path{doi:10.4171/JEMS/807}}.

\bibitem{Eliahou2019ae-graph}
Shalom {Eliahou}.
\newblock {A graph-theoretic approach to Wilf's conjecture}.
\newblock {\em arXiv e-prints}, page arXiv:1909.03699, Sep 2019.
\newblock \href {http://arxiv.org/abs/1909.03699} {\path{arXiv:1909.03699}}.

\bibitem{EliahouFromentin2019}
Shalom Eliahou and Jean Fromentin, 2019.
\newblock Personal communication.

\bibitem{EliahouFromentin2019ae-Gapsets}
Shalom {Eliahou} and Jean {Fromentin}.
\newblock {Gapsets of small multiplicity}.
\newblock {\em arXiv e-prints}, page arXiv:1903.03463, Mar 2019.
\newblock \href {http://arxiv.org/abs/1903.03463} {\path{arXiv:1903.03463}}.

\bibitem{EliahouFromentin2019SF-misses}
Shalom Eliahou and Jean Fromentin.
\newblock Near-misses in {W}ilf's conjecture.
\newblock {\em Semigroup Forum}, 98(2):285--298, 2019.
\newblock URL: \url{https://doi.org/10.1007/s00233-018-9926-5}, \href
  {http://dx.doi.org/10.1007/s00233-018-9926-5}
  {\path{doi:10.1007/s00233-018-9926-5}}.

\bibitem{EliahouFromentin2020JCTSA-Gapsets}
Shalom Eliahou and Jean Fromentin.
\newblock Gapsets and numerical semigroups.
\newblock {\em J. Combin. Theory Ser. A}, 169, 2020.
\newblock To appear.
\newblock URL: \url{https://doi.org/10.1016/j.jcta.2019.105129}, \href
  {http://dx.doi.org/10.1016/j.jcta.2019.105129}
  {\path{doi:10.1016/j.jcta.2019.105129}}.

\bibitem{Elizalde2010JPAA-Improved}
Sergi Elizalde.
\newblock Improved bounds on the number of numerical semigroups of a given
  genus.
\newblock {\em J. Pure Appl. Algebra}, 214(10):1862--1873, 2010.
\newblock URL: \url{http://dx.doi.org/10.1016/j.jpaa.2009.12.031}, \href
  {http://dx.doi.org/10.1016/j.jpaa.2009.12.031}
  {\path{doi:10.1016/j.jpaa.2009.12.031}}.

\bibitem{FroebergGottliebHaeggkvist1987SF-numerical}
R.~Fr\"oberg, C.~Gottlieb, and R.~H\"aggkvist.
\newblock On numerical semigroups.
\newblock {\em Semigroup Forum}, 35(1):63--83, 1987.
\newblock URL: \url{https://doi.org/10.1007/BF02573091}, \href
  {http://dx.doi.org/10.1007/BF02573091} {\path{doi:10.1007/BF02573091}}.

\bibitem{FromentinHivert-github-page}
Jean Fromentin and Florent Hivert.
\newblock Computing the number of numerical monoid of a given genus.
\newblock Page visited in 27/06/2019.
\newblock URL: \url{https://github.com/hivert/NumericMonoid}.

\bibitem{FromentinHivert2016MC-Exploring}
Jean Fromentin and Florent Hivert.
\newblock Exploring the tree of numerical semigroups.
\newblock {\em Math. Comp.}, 85(301):2553--2568, 2016.
\newblock URL: \url{http://dx.doi.org/10.1090/mcom/3075}, \href
  {http://dx.doi.org/10.1090/mcom/3075} {\path{doi:10.1090/mcom/3075}}.

\bibitem{GAP4-2019}
The GAP~Group.
\newblock {\em {GAP -- Groups, Algorithms, and Programming, Version 4.10.1}}, 2
  2019.
\newblock URL: \url{https://www.gap-system.org}.

\bibitem{Kaplan2012JPAA-Counting}
Nathan Kaplan.
\newblock Counting numerical semigroups by genus and some cases of a question
  of {W}ilf.
\newblock {\em J. Pure Appl. Algebra}, 216(5):1016--1032, 2012.
\newblock URL: \url{http://dx.doi.org/10.1016/j.jpaa.2011.10.038}, \href
  {http://dx.doi.org/10.1016/j.jpaa.2011.10.038}
  {\path{doi:10.1016/j.jpaa.2011.10.038}}.

\bibitem{Kaplan2017AMM-Counting}
Nathan Kaplan.
\newblock Counting numerical semigroups.
\newblock {\em Amer. Math. Monthly}, 124(9):862--875, 2017.
\newblock URL: \url{https://doi.org/10.4169/amer.math.monthly.124.9.862}, \href
  {http://dx.doi.org/10.4169/amer.math.monthly.124.9.862}
  {\path{doi:10.4169/amer.math.monthly.124.9.862}}.

\bibitem{MoscarielloSammartano2015MZ-conjecture}
Alessio Moscariello and Alessio Sammartano.
\newblock On a conjecture by {W}ilf about the {F}robenius number.
\newblock {\em Math. Z.}, 280(1-2):47--53, 2015.
\newblock URL: \url{https://doi.org/10.1007/s00209-015-1412-0}, \href
  {http://dx.doi.org/10.1007/s00209-015-1412-0}
  {\path{doi:10.1007/s00209-015-1412-0}}.

\bibitem{Ramirez-Alfonsin2005Book-Diophantine}
J.~L. Ramírez-Alfonsín.
\newblock {\em The {D}iophantine {F}robenius problem}, volume~30 of {\em Oxford
  Lecture Series in Mathematics and its Applications}.
\newblock Oxford University Press, Oxford, 2005.
\newblock URL: \url{https://doi.org/10.1093/acprof:oso/9780198568209.001.0001},
  \href {http://dx.doi.org/10.1093/acprof:oso/9780198568209.001.0001}
  {\path{doi:10.1093/acprof:oso/9780198568209.001.0001}}.

\bibitem{RosalesGarcia-SanchezGarcia-GarciaJimenezMadrid2004JPAA-Fundamental}
J.~C. Rosales, P.~A. Garc\'{\i}a-S\'{a}nchez, J.~I. Garc\'{\i}a-Garc\'{\i}a,
  and J.~A. Jim\'{e}nez~Madrid.
\newblock Fundamental gaps in numerical semigroups.
\newblock {\em J. Pure Appl. Algebra}, 189(1-3):301--313, 2004.
\newblock URL: \url{https://doi.org/10.1016/j.jpaa.2003.10.024}, \href
  {http://dx.doi.org/10.1016/j.jpaa.2003.10.024}
  {\path{doi:10.1016/j.jpaa.2003.10.024}}.

\bibitem{RosalesGarcia2009Book-Numerical}
J.~C. Rosales and P.~A. García~Sánchez.
\newblock {\em Numerical semigroups}, volume~20 of {\em Developments in
  Mathematics}.
\newblock Springer, New York, 2009.
\newblock URL: \url{http://dx.doi.org/10.1007/978-1-4419-0160-6}, \href
  {http://dx.doi.org/10.1007/978-1-4419-0160-6}
  {\path{doi:10.1007/978-1-4419-0160-6}}.

\bibitem{Sammartano2012SF-Numerical}
Alessio Sammartano.
\newblock Numerical semigroups with large embedding dimension satisfy {W}ilf's
  conjecture.
\newblock {\em Semigroup Forum}, 85(3):439--447, 2012.
\newblock URL: \url{https://doi.org/10.1007/s00233-011-9370-2}, \href
  {http://dx.doi.org/10.1007/s00233-011-9370-2}
  {\path{doi:10.1007/s00233-011-9370-2}}.

\bibitem{Wilf1978AMM-circle}
Herbert~S. Wilf.
\newblock A circle-of-lights algorithm for the ``money-changing problem''.
\newblock {\em Amer. Math. Monthly}, 85(7):562--565, 1978.
\newblock URL: \url{http://dx.doi.org/10.2307/2320864}, \href
  {http://dx.doi.org/10.2307/2320864} {\path{doi:10.2307/2320864}}.

\bibitem{Zhai2012SF-Fibonacci}
Alex Zhai.
\newblock Fibonacci-like growth of numerical semigroups of a given genus.
\newblock {\em Semigroup Forum}, 86(3):634--662, 2013.
\newblock URL: \url{http://dx.doi.org/10.1007/s00233-012-9456-5}, \href
  {http://dx.doi.org/10.1007/s00233-012-9456-5}
  {\path{doi:10.1007/s00233-012-9456-5}}.

\bibitem{Zhao2010SF-Constructing}
Yufei Zhao.
\newblock Constructing numerical semigroups of a given genus.
\newblock {\em Semigroup Forum}, 80(2):242--254, 2010.
\newblock URL: \url{http://dx.doi.org/10.1007/s00233-009-9190-9}, \href
  {http://dx.doi.org/10.1007/s00233-009-9190-9}
  {\path{doi:10.1007/s00233-009-9190-9}}.

\end{thebibliography}

\end{document}